\newtheorem{thm}{Theorem}
\newtheorem{lem}[thm]{Lemma}
\newtheorem{cor}[thm]{Corollary}
\newtheorem{prop}[thm]{Proposition}
\theoremstyle{definition}  
\newtheorem{example}[thm]{Example}
\newtheorem{definition}[thm]{Definition}
\newtheorem{remark}[thm]{Remark}
\newcommand{\Z}{\mathbb Z}
\newcommand{\N}{\mathbb N}
\newcommand{\R}{\mathbb R}
\DeclareMathOperator{\Fac}{Fac}
\DeclareMathOperator{\End}{End}
\DeclareMathOperator{\Aut}{Aut}
\DeclareMathOperator{\Id}{Id}
\newcommand{\abs}[1]{\left| #1 \right|}
\newcolumntype{C}{ >{\centering\arraybackslash} m{0.3cm}}
\begin{document}

\author{Clemens M\"{u}llner and  Reem Yassawi}
\address{Institut Camille Jordan, Universit\'{e} Claude Bernard Lyon 1, France, TU Wien, Austria \\
and \\
Institut Camille Jordan, Universit\'{e} Claude Bernard Lyon 1, France
}
\email{ clemens.muellner@tuwien.ac.at    \\ryassawi@gmail.com}

\keywords{substitution dynamical systems, automatic sequences, automorphisms}
\subjclass{Primary 37B10, 11B85,  Secondary 37B05, 37B15}
\thanks{ The authors were supported by     the European Research Council (ERC) under the European Union's Horizon 2020 research and innovation programme, Grant Agreement No 648132. 
The second author also thanks IRIF, Universit\'e Paris Diderot-Paris 7, for its hospitality and support.}

\title{Automorphisms of Automatic Shifts}
\maketitle

\begin{abstract}
In this article we continue the study of automorphism groups of constant length substitution shifts and also their topological factors. We show that up to conjugacy, all roots of the identity map are letter exchanging maps, 
and all other nontrivial automorphisms arise from {\em twisted} compressions of another constant length substitution. We characterise the group of  roots of the identity in both the measurable and topological setting. Finally, we show that any topological  factor of a constant length substitution shift is topologically conjugate to a constant length substitution shift via a letter-to-letter code.
\end{abstract}
\section{Introduction}

In this article we continue the study of the automorphism groups of constant length substitution shifts and complete some of our work in \cite{coven-quas-yassawi}. In particular, there we gave an algorithm for the computation of the automorphism group $\Aut(X_\theta,\sigma)$ of a constant length substitutional shift. Here, we 
ask what are the possible elements of this group, and also, what kinds of factors these shifts can have. In both cases, the answer is a form of  rigidity.

Our strategy in \cite{coven-quas-yassawi} was to attach a rational number $\kappa(\Phi)$ to each automorphism $\Phi$, which signifies  by how much points are being shifted while being acted on by $\Phi$ (Theorem \ref{thm:ethan}).  In particular,  a simple example of an automorphism whose $\kappa$-value  equals zero is that coming from a permutation on letters which plays well with the substitution $\theta$, by which we mean that it commutes with some power of 
the substitution. These automorphisms are natural and indicate that there is a symmetry in the substitution rule, up to a shuffling of letters. Now one can construct examples of substitutions with automorphisms whose $\kappa$-value is zero, but which are not of this nature.  However we show, in Theorem \ref{strongly-injective-rep}, that up to conjugacy, only the natural case manifests. 

What remains to consider are the automorphisms with a nonzero rational $\kappa$-value $\frac{1}{k}$. It is not difficult to create such situations. Namely, one starts with a substitution $\eta$, and one takes its {\em $k$-compression}. This is the process whereby we rewrite a sequence as a concatenation of words of length $k$, and where the shift map moves from one word to the next. Cobham \cite{Cobham} used this terminology, and he showed that constant length substitutions are robust under this operation: a $k$-compression of a length $r$ substitution shift  $(X_\eta,\sigma)$ gives us another length $r$ substitution shift $(X_\theta,\sigma)$. By construction,  $\Aut(X_\theta,\sigma)$ will contain a shadow of the shift on $X_\eta$, and this manifests as a   $k$-th root of the shift map on $X_\theta$. Can automorphisms $\Phi$ with $\kappa(\Phi)=\frac{1}{k}$ arise in any other way? We discover, in Theorem \ref{roots}, that up to conjugacy, the only other way that they can arise is if
we {\em twist} after we compress $\eta$ (Definition \ref{twist-def}). At some level, our results say that the automorphisms of our substitution shift $X_\theta$ are only constrained by fixed points, either of $\theta$, or of another substitution.

Can one hope that the letter exchanging maps of $\theta$ are related to those of $\eta$? In our world, this is generally not the case; we see this in Examples \ref{example-salvaged} and \ref{no-hope}. There is a fundamental obstruction discussed in Remark \ref{obstruction}. Hence we fall short of giving a general structure theorem. We could formulate one, but it would not be succinct.

We also characterise, in Theorem \ref{characterisation-of-kernel}, the letter exchanging maps that define automorphisms of $X_\theta$, generalising results due to Lemanczyk and Mentzen  in the bijective case \cite{L-M}. In particular, they characterised the measurable essential centralizer of a bijective substitution as being the centraliser of the group generated by the columns of $\theta$; the topological statement is identical. To characterise the letter exchanging maps of an arbitrary constant length substitution, we work with {\em minimal sets}, a notion that was implicit in the algorithm of \cite{coven-quas-yassawi}, but which was defined and used extensively by the first author in \cite[Section 2]{M-2017}. We obtain characterisations in both the topological setting, in Theorem \ref{characterisation-of-kernel}, and the measurable setting, in Theorem \ref{best-result-yet}.

These are our principal results, but on our trip to discovering them, we obtained some other results of interest along the way. The first, which follows from Lemma \ref{sliding-block-rep} and  
Theorem \ref{th:main}, tells us  that any topological shift factor of a constant length substitution shift $X_\theta$ is a letter-to-letter coding of a constant length substitution shift.  This answers a question of Durand and Leroy in \cite[Section 8]{Durand-Leroy}. It also suggests that the question that Coven, Dekking and Keane ask in \cite{CDK}, namely to list all {\em constant-length} substitutions whose shifts are conjugate to a given constant length substitution shift, is not constraining.

Finally, we investigate the relationship between $\Aut(X,\sigma)$ and the existence of a uniform-to-one factor for $X$. This has been studied by Herning \cite{Herning} in the case where $X = X_{\theta}$ and the factor is metrically a rotation. In Theorem \ref{automorphism-almost-automorphic-v2}, we show that $\Aut(X,\sigma)$ contains a square root of the identity if and only if $X$ has a two-to-one  factor. In Example
\ref{no-automorphism-example}, we show that there is no possibility of extending this result.

\section{Preliminaries}

Let $\mathcal A$ be a finite alphabet, with the discrete topology.
We endow   $\mathcal A^\mathbb Z$ 
with the product topology, and let 
  $\sigma:\mathcal A^\mathbb Z \rightarrow \mathcal A^\mathbb Z$
denote the (left) shift map.
We consider only infinite  {\em minimal} shifts $(X,\sigma)$, i.e. those such that the $\sigma$-orbit of any point  in $X$ is dense in $X$.  
The {\em language $\mathcal L_X$} of a shift $(X,\sigma)$ is the set of all finite words  $w=w_1\ldots w_n$ that appear as a subword of some $x$ in $X$, i.e. $w=x_j\ldots x_{j+n-1}$ for some $j$.

\subsubsection{Automorphism groups}

A {\em factor map}  from $(X, \sigma) $  to $(Y,\sigma)$ is a map $\Phi:X\rightarrow Y$ 
which is continuous and commutes with $\sigma$. If $X=Y$, $\Phi$ is called an endomorphism. If $\Phi$ is bijective, it is called a {\em conjugacy}, and 
if $\Phi$ is a self-conjugacy  then it is called an {\em automorphism}.
We say $\Phi:X\rightarrow X$ has (finite){\em order m} 
if $m$ is the least positive integer  such that
$\Phi^m$ is the identity map $\Id$, and we say that $\Phi$ is a 
{\em $k$-th root of the  shift if $\Phi^k=\sigma$}. Let ${\Aut}(X,\sigma)$ 
denote the automorphism group of $( X,\sigma)$; it contains the (normal) subgroup 
generated by the shift, denoted by $\langle\sigma\rangle$.

Let $X\subset \mathcal A^\Z$ and let $Y\subset \mathcal B^\Z$. By the Curtis--Hedlund--Lyndon theorem, for any factor map $\Phi:X\rightarrow Y$,
there exist $\ell$, $r$, and 
a map $f\colon \mathcal A^{\ell+r+1}\to \mathcal B$ with the property that
$(\Phi(x))_n= f(x_{n-\ell}, \ldots ,x_n, \ldots ,x_{n+r})$ for all $x$ and $n$.
Let $\ell,r$ be the smallest possible integers so that such an $f$ exists.
We call $\ell$ and $r$ 
the  left  and right radius of $\Phi$ respectively.   
We say $\Phi$ has  radius $R$ if
its left radius and right radius are both at most $R$.

\subsubsection{Constant length substitutions}
A  {\em substitution} is a map from $\mathcal A$ to the set of 
nonempty finite words on $\mathcal A$. We use concatenation to 
extend $\theta$ to a map on finite and infinite words 
from $\mathcal A$. We say that $\theta$ is {\em primitive} if there is some 
$k\in \N$ such that for any $a,a'\in \mathcal A$,
the word $\theta^k(a)$ contains at least one  occurrence of $a'$. 
By iterating $\theta$ on any fixed letter in $\mathcal A$, 
we obtain  one-sided right-infinite points $u=u_0 \ldots $  
such that $\theta^j(u)=u$ 
for some natural $j$. Similarly we can obtain one-sided left-infinite points $v= \ldots v_{-1}$ such that  $\theta^j(v)=v$. A bi-infinite periodic point for $\theta$ is a concatenation of a left-infinite periodic point $v= \ldots v_{-1}$ and a right-infinite periodic point  $u=u_0 \ldots $ provided that $v_{-1}u_0$ appears in some word $\theta^k(a)$.  
The pigeonhole principle implies that  $\theta$-periodic points
always exist, and, for primitive substitutions, we define 
$ X_\theta$ to be the shift orbit closure of any one of 
these bi-infinite $\theta$-periodic points and call 
$( X_\theta,  \sigma)$ a {\em  substitution shift}.
The substitution $\theta$ has
{\em (constant) length~r}  if for each $a\in \mathcal A$, 
$\theta (a)$ is a word of length $r$.

Let  $\theta$ be  a primitive length $r$ substitution such that  $X_\theta$ is infinite. We say that $\theta$ is  {\em recognizable} if any  $x \in X_\theta$ 
can be written in a unique way as 
$x=\sigma^k(\theta(y))$ where $y\in X_\theta$ and $0\leq k<r$. Our substitutions are recognizable,  see \cite{host} for a proof in the 
injective case, which is all we will need.

Let $\theta$ be a length $r$ substitution. We write 
\[\theta (a)= \theta_0(a) \ldots \theta_{r-1}(a);\]
with this notation we 
see that for each  $0\leq i \leq r-1$, we have a map
$\theta_i:\mathcal A \rightarrow \mathcal A$ where $\theta_i(a)$ is the 
$(i+1)$-st letter of $\theta(a)$. Similarly, for any $k$ we  write 
$(\theta^k)_i$ to denote the map which sends  $a\in \mathcal A$ to  the $(i+1)$-st letter of $\theta^k(a)$.

Let $\theta$ be a primitive length $r$ substitution with fixed 
point $u$, and with $(X_\theta,\sigma)$  infinite. 
The {\em height }$h=h(\theta)$ of $\theta$   is defined as 
\[
h(\theta):= \max \{n\geq 1: \gcd(n,r)=1, n | \gcd\{a: u_a=u_0 \} \}\, .
\]
If $h>1$, this means that $\mathcal A$ decomposes into $h$ disjoint 
subsets: $\mathcal A_1\cup\ldots\cup \mathcal A_h$, where a symbol from
$\mathcal A_i$ is always followed by a symbol from $\mathcal A_{i+1}$.

If  $\theta$ has a nontrivial height $h\geq 2$, then the shift $(X_\theta,\sigma)$ is topologically conjugate to a  constant height suspension of a height one substitution shift $(X_{\theta'}, \sigma)$, where $\theta'$ is called the {\em pure base} of $\theta$. In this case
$(X_{\theta}, \sigma) 
\cong(X_{\theta'}\times \{0,\ldots,h-1\},T)$ where

\begin{equation*}
 T(x,i):=
\begin{cases}
  (x,i+1)      & \text{ if  } 
0\leq  i<h-1    \\
    (\sigma'(x), 0)   & \text{ if } i=h-1
\end{cases}
\end{equation*} 
We refer the reader to \cite[Remark 9, Lemmas 17 and 19]{dekking} for details.

For every  $\Psi \in \Aut(X_{\theta'},\sigma')$ and  $0\leq i\leq h-1$
we can define $\Psi_i \in \Aut(X_{\theta'} \times \{0,\ldots,h-1\}, T)$ as

\begin{equation*}
\Psi_i(x,j) :=
\begin{cases}
  (\Psi(x), j+i) & \text{ if  } 
   j+i<h \\
       (\Psi(\sigma'(x)), j+i \mod h)     & \text{ if } j+i\geq h.
\end{cases}
\end{equation*}
The following is proved in \cite[Proposition 3.5]{coven-quas-yassawi}. 

\begin{prop}\label{prop:automorphism_with_height}
Let $\theta$  be a primitive,  length $r$ substitution, of height $h$, and  
such that $(X_{\theta},\sigma)$ is infinite. Let $\theta'$ 
be a pure base of $\theta$.
Then $ \Aut(X_{\theta},\sigma) =
\{   \Psi_j: \Psi \in \Aut(X_{\theta'},\sigma') \mbox{ and }  0\leq j<h  \}.$ 
\end{prop}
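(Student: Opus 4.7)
The plan is to prove the two inclusions, with the forward inclusion $\Aut(X_\theta,\sigma) \subseteq \{\Psi_j\}$ being the substantive one. Throughout I will work in the conjugate system $(X_{\theta'}\times\{0,\ldots,h-1\},T)$, identifying $\Aut(X_\theta,\sigma)$ with $\Aut(X_{\theta'}\times\{0,\ldots,h-1\},T)$ and writing $\tilde\Phi$ for the automorphism in the suspension picture.

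For the inclusion $\supseteq$, I would check directly from the formula defining $\Psi_j$ that it is a homeomorphism and that $\Psi_j \circ T = T \circ \Psi_j$, splitting into the cases $j+i+1 < h$, $j+i+1 = h$, and $j+i+1 > h$. Bijectivity is immediate since $\Psi$ is and the second coordinate is a cyclic translation; the verification that $T\Psi_j = \Psi_j T$ uses the fact that $\Psi$ commutes with $\sigma'$.

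The crux is the inclusion $\subseteq$. The key observation is that $T^h = \sigma' \times \Id$, so the clopen sets $X_{\theta'}\times\{i\}$ are exactly the minimal components of $T^h$ (since $(X_{\theta'},\sigma')$ is minimal). Because $\tilde\Phi$ commutes with $T$, it commutes with $T^h$, and therefore it must permute these $h$ minimal sets. Moreover, $T$ itself maps $X_{\theta'}\times\{i\}$ to $X_{\theta'}\times\{i+1 \bmod h\}$, so the permutation induced by $\tilde\Phi$ commutes with this cyclic shift of indices; hence it is itself a cyclic shift. This yields a unique $0\le j<h$ with $\tilde\Phi(X_{\theta'}\times\{i\}) = X_{\theta'}\times\{i+j \bmod h\}$ for all $i$.

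I would then define $\Psi\colon X_{\theta'}\to X_{\theta'}$ by $\tilde\Phi(x,0)=(\Psi(x),j)$. Continuity and bijectivity of $\Psi$ follow from the corresponding properties of $\tilde\Phi$ and the clopenness of $X_{\theta'}\times\{0\}$ and $X_{\theta'}\times\{j\}$. Commutation with $\sigma'$ follows from
\[
(\Psi(\sigma'(x)),j) = \tilde\Phi(\sigma'(x),0) = \tilde\Phi T^h(x,0) = T^h\tilde\Phi(x,0) = (\sigma'(\Psi(x)),j),
\]
so $\Psi\in\Aut(X_{\theta'},\sigma')$. Finally, to conclude $\tilde\Phi = \Psi_j$, I would apply $T^i$ to $(x,0)$ and use $\tilde\Phi T^i = T^i\tilde\Phi$, treating the two cases $i+j<h$ and $i+j\ge h$ separately; in the second case one applies $T$ past the index $h-1$, which introduces exactly one factor of $\sigma'$ on the first coordinate, matching the second clause of the definition of $\Psi_j$. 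The main obstacle to watch for is that bookkeeping between the cyclic index shift and the extra $\sigma'$ in the ``wraparound'' case; provided one tracks it carefully, the identification $\tilde\Phi=\Psi_j$ is forced.
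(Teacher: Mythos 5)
Your proof is correct. The paper does not actually prove this proposition in the text---it only cites \cite[Proposition 3.5]{coven-quas-yassawi}---and your argument (using $T^h=\sigma'\times\Id$ to identify the clopen slices $X_{\theta'}\times\{i\}$ as the $T^h$-minimal components, noting that the induced permutation of slices centralizes the $h$-cycle and is therefore a rotation by some $j$, and then reading off $\Psi$ from the action on the $0$-th slice) is the standard argument and in substance the one the cited reference gives.
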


Proposition \ref{prop:automorphism_with_height} tells us that we need only study automorphism groups of shifts $(X_\theta,\sigma)$ where $\theta$ has height one, and henceforth this will almost always be a standing assumption.

\subsubsection{The maximal equicontinuous factor of a constant length substitution shift}

The {\em maximal equicontinuous factor} of a dynamical system $(X,\sigma)$ encodes its continuous eigenvalues. For a precise definition, see \cite{coven-quas-yassawi}.
The following is shown by Dekking \cite{dekking}, with partial 
results by Kamae \cite{kamae} and Martin \cite{martin}. 
\begin{thm} \label{thm:dekking}
Let $\theta$  be a primitive,  length $r$ substitution, of height $h$, 
and such that $(X_\theta,\sigma)$ is infinite. Then 
the maximal equicontinuous factor of $(X_\theta, \sigma)$ is $(\Z_r \times \Z/h\Z , +(1,1))$.
\end{thm}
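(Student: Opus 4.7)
The strategy is to reduce the general case to the height-one case via the suspension conjugacy recalled just before Proposition \ref{prop:automorphism_with_height}, and then to compute the maximal equicontinuous factor (MEF) of a height-one constant-length substitution shift as the $r$-adic odometer.

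\textbf{Construction of the factor map in the height-one case.} For $\theta$ primitive of constant length $r$ with $h(\theta)=1$, I would use recognizability to build a factor map $\pi\colon X_\theta \to \Z_r$. Given $x\in X_\theta$, recognizability yields a unique decomposition $x=\sigma^{k_0(x)}\theta(x^{(1)})$ with $0\leq k_0(x) < r$ and $x^{(1)}\in X_\theta$; iterating produces digits $k_n(x)\in\{0,\ldots,r-1\}$, and I set $\pi(x):=\sum_{n\geq 0}k_n(x)r^n\in\Z_r$. Continuity is immediate because each digit is locally determined by a sufficiently large window of $x$. A short calculation shows $\pi(\sigma x)=\pi(x)+1$: applying $\sigma$ increments $k_0$, and when $k_0=r-1$ the identity $\sigma^r\theta(x^{(1)})=\theta(\sigma x^{(1)})$ forces the carry to pass to $x^{(1)}$, mimicking $r$-adic addition. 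Surjectivity follows from primitivity together with the existence of $\theta$-periodic points realising any prescribed initial string of digits.

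\textbf{Maximality.} The main obstacle is to show that $\pi$ is in fact the MEF, equivalently that every continuous eigenvalue $\lambda$ of $(X_\theta,\sigma)$ has the form $e^{2\pi i p/r^n}$. Given an eigenfunction $f\colon X_\theta\to\mathbb{S}^1$ with $f\circ\sigma=\lambda f$, and a $\theta$-fixed point $u$, the self-similarity $u=\theta^n(u)$ forces $f(\sigma^{r^n}u)=f(u)$ up to a controlled error coming from the cocycle induced by $\theta$; a pigeonhole/compactness argument along the sequence $(r^n)$ then yields $\lambda^{r^n}\to 1$, so $\lambda$ is an $r$-power root of unity. The hypothesis $h(\theta)=1$ excludes any further rational eigenvalues, since a prime $p\mid h$ would manifest as an additional periodic component of the orbit of $u$. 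This is precisely the argument carried out in \cite{dekking}, building on \cite{kamae,martin}. The eigenvalue group is therefore $\Z[1/r]/\Z$, whose Pontryagin dual is $\Z_r$, and $\sigma$ acts via translation by $1$.

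\textbf{Restoring the height.} For general $h$, the suspension identification $(X_\theta,\sigma)\cong(X_{\theta'}\times\Z/h\Z,T)$ with $\theta'$ the pure base of height one reduces to the previous case: the return map $T^h$ acts on the first coordinate as $\sigma'$, whose MEF is $(\Z_r,+1)$, while the $\Z/h\Z$ coordinate contributes a factor map to $(\Z/h\Z,+1)$. Hence the continuous eigenvalue group of $(X_\theta,\sigma)$ is $\Z[1/r]/\Z\oplus\Z/h\Z$, and Pontryagin duality gives the MEF as $\Z_r\times\Z/h\Z$. Because $\gcd(r,h)=1$ by the definition of height, the element $(1,1)$ generates a dense subgroup of $\Z_r\times\Z/h\Z$, so $\sigma$ corresponds to $+(1,1)$, completing the identification.
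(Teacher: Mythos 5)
The paper does not actually prove this statement: it is quoted from Dekking \cite{dekking} (with partial cases due to Kamae and Martin), so there is no in-paper argument to compare against, and your proposal is in effect a reconstruction of Dekking's proof. The construction of the factor map from recognizability digits, the reduction of the general case to the pure base, and the observation that $\gcd(r,h)=1$ makes $(1,1)$ generate a dense subgroup of $\Z_r\times\Z/h\Z$ are all correct and consistent with the conventions the paper itself sets up afterwards (the maps $\Lambda_{r^n}$ and $\pi$, and the suspension picture).

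The gap is in the maximality step, which is the entire content of the theorem. The comparison you invoke is wrong as stated: for a $\theta$-fixed point $u$, the points $u$ and $\sigma^{r^n}u$ are \emph{not} close --- their right rays are $\theta^n(u_0u_1\cdots)$ and $\theta^n(u_1u_2\cdots)$ respectively, which already differ on $[0,r^n)$ unless $\theta^n(u_0)=\theta^n(u_1)$ --- so continuity of an eigenfunction tells you nothing about $f(\sigma^{r^n}u)/f(u)$. The correct comparison is between $\sigma^{r^n i}u$ and $\sigma^{r^n j}u$ for indices with $u_i=u_j$: these agree on $[0,r^n)$, whence $\lambda^{r^n(j-i)}\to 1$. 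This only controls $\lambda^{r^n m}$ for $m$ in the difference set $\{\,j-i: u_i=u_j\,\}$, and passing from that to the statement that the continuous eigenvalue group is exactly $\{e^{2\pi i p/(r^n h)}\}$ --- in particular that no prime $q$ coprime to $r$ with $q\nmid h$ occurs, and that no irrational eigenvalue survives --- is precisely where the height (via $\gcd\{a: u_a=u_0\}$) and Dekking's function $\gamma$ enter; it is not the one-line pigeonhole you describe. Even granting $\lambda^{r^n}\to 1$, the deduction that $\lambda$ is an $r$-power root of unity needs a short separate argument showing $\{\alpha\in\R/\Z: r^n\alpha\to 0\}=\Z[1/r]/\Z$. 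Since you explicitly defer this step to \cite{dekking}, the proposal is an accurate roadmap rather than a self-contained proof, and the one concrete claim you do make at the critical step does not hold as written.
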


\subsubsection{ Automatic shifts and sliding block code representations}

Let $\theta:\mathcal A\rightarrow \mathcal A^r$ be a primitive constant length substitution. If $\tau:\mathcal A \rightarrow \mathcal B$ and  $Y:= \tau(X_\theta)$, then $Y$ is closed and  $\sigma$-invariant, and we call $(Y,\sigma)$ an {\em $r$-automatic shift},  specifying that it is generated by $(\theta, \tau)$ when necessary. We use this terminology since for $u$ one-sided and $\theta$-fixed, $\tau(u)$ is known as an  {\em $r$-automatic} sequence. Note that $(Y,\sigma)$ is a factor of $(X_\theta,\sigma)$ via a radius zero factor map.

It can happen that $X_\theta$ is infinite, but $\tau(X_\theta)$ is finite. We are not interested in this case and will always require that the automatic shifts we consider are infinite. If $(X_\theta,\sigma)$ is minimal and $Y=\tau(X_\theta)$, then $(Y,\sigma)$ is minimal.

 Given a constant length substitution $\theta:\mathcal A\rightarrow \mathcal A^{r}$,  $\ell\in \N$ and $0\leq k <r$, we define $\theta^{(\ell,k)}:\mathcal A^{\ell}\rightarrow (\mathcal A^{(\ell)})^{r}$, the {\em $k$-shifted $\ell$-sliding block representation of  $\theta$}, which appears in \cite[Section 5.4]{Queffelec}. Namely if $(\alpha_1\ldots \alpha_\ell)\in \mathcal A^{\ell}$,  and $\theta (\alpha_1\ldots \alpha_\ell)= a_1 \ldots a_{\ell r}$, let
\[ \theta^{(\ell)}( \alpha_1\ldots \alpha_\ell    ) := (a_{k+1}\ldots a_{k+\ell})(a_{k+2} \ldots a_{k+\ell +1})\ldots (a_{k+r} \ldots a_{k+ r+\ell-1}).    \]
If $k=0$ we will write $\theta^{(\ell)}$ for $\theta^{(\ell, 0)}$.
It is straightforward to show that for each $\ell\geq 1$ and each $0\leq k <r$, $(X_{\theta^{(\ell,k)}}, \sigma)$ is topologically conjugate to  $(X_\theta, \sigma)$.
 The $\ell$-sliding block representation is particularly useful as it allows us to change the radius of a factor map. 

\begin{lem}\label{sliding-block-rep}

Let $(Y,\sigma)$ be a shift and let $\Phi:X_{\theta}\rightarrow Y$ be a  shift-commuting continuous map with left radius $\ell$ and right radius $r$. Then there is a shift commuting map $\Phi^{(\ell+r+1)}:X_{\theta^{(\ell+r+1)}}\rightarrow Y$, with left and right radius zero.
\end{lem}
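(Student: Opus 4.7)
The plan is to define $\Phi^{(L)}$, with $L := \ell+r+1$, as the radius-zero sliding block code whose local rule is literally the local rule of $\Phi$, read on the $L$-letter alphabet $\mathcal{A}^L$ by identifying each letter with an $L$-tuple in $\mathcal{A}$. Everything else reduces to checking that this map lands in $Y$.

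First, by the Curtis--Hedlund--Lyndon theorem, there is a local rule $f \colon \mathcal{A}^L \to \mathcal{B}$ with $\Phi(x)_n = f(x_{n-\ell}, \ldots, x_{n+r})$. Identifying $w \in \mathcal{A}^L$ with an $L$-tuple $(w_0, \ldots, w_{L-1})$, I would use the \emph{same} $f$ to define
\[
\Phi^{(L)}(y)_n \;:=\; f(y_n) \qquad \text{for } y \in X_{\theta^{(L)}}.
\]
By construction this is continuous, shift-commuting, and of left and right radius zero.

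To verify that $\Phi^{(L)}(X_{\theta^{(L)}}) \subseteq Y$, I would use the natural topological conjugacy $\pi \colon X_\theta \to X_{\theta^{(L)}}$ given by $\pi(x)_n := (x_n, x_{n+1}, \ldots, x_{n+L-1})$, whose existence is the content of the conjugacy statement preceding the lemma. A direct computation then yields
\[
\Phi^{(L)}(\pi(x))_n \;=\; f(x_n, x_{n+1}, \ldots, x_{n+\ell+r}) \;=\; \Phi(x)_{n+\ell},
\]
so $\Phi^{(L)} \circ \pi = \sigma^\ell \circ \Phi$. Since $\Phi(X_\theta) \subseteq Y$ and $Y$ is shift-invariant, this gives $\Phi^{(L)}(X_{\theta^{(L)}}) = \sigma^\ell(\Phi(X_\theta)) \subseteq Y$, completing the construction.

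There is no serious conceptual obstacle; the argument is essentially bookkeeping. The only subtle point is aligning the explicit formula for $\pi$ with the $k=0$ version of the sliding block representation, namely the direct check that $u = \theta(u)$ implies $\pi(u) = \theta^{(L,0)}(\pi(u))$. This follows from expanding $\theta(u_n u_{n+1}\ldots u_{n+L-1})$ using $u = \theta(u)$ and reading off its successive $L$-windows.
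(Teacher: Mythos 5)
Your proof is correct and is exactly the standard argument the authors evidently had in mind; the paper in fact states Lemma \ref{sliding-block-rep} without giving any proof, treating it as routine. The key identity $\Phi^{(\ell+r+1)}\circ\pi=\sigma^{\ell}\circ\Phi$, together with the shift-invariance of $Y$ and the surjectivity of the block conjugacy $\pi$, settles everything, and your closing remark about checking that $\pi$ carries $\theta$-fixed points to $\theta^{(\ell+r+1,0)}$-fixed points is precisely the bookkeeping needed to identify $\pi(X_\theta)$ with $X_{\theta^{(\ell+r+1)}}$.
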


\subsection{Maximal equicontinuous factors of automatic shifts}

Let $(Y,\sigma)$ be an automatic shift generated by $(\theta,\tau)$. 
As $(Y,\sigma)$ is a topological factor of $(X_\theta, \sigma)$, its maximal equicontinuous factor must be a factor of that of $(X_\theta, \sigma)$, which is $(\Z_r \times \Z/h\Z, +(1,1))$.
  In principle, $(\Z_r, +1)$ may have smaller non-finite factors, in particular when $r$ is not prime. In the next lemmas we show that as long as $(Y, \sigma)$ is not finite  it always has $(\Z_r, +1)$ as an equicontinuous factor. We briefly describe the required set up, referring the reader to \cite{dekking}  and \cite{Gottschalk-Hedlund} for more detail. Recall that a  partition $\{ P_1, \ldots , P_k\}$ of a shift $(X,\sigma)$ is {\em cyclic} if 
$\sigma({P_i}) = P_{i+1\bmod k}$. A partition is {\em $\sigma^{n}$-minimal} if each of its elements are $\sigma^{n}$-minimal, i.e. each $(P_i, \sigma^n)$ is a minimal shift. For each $n$, any minimal shift has a cyclic $\sigma^n$-minimal cyclic partition, which by minimality is unique up to cyclic permutation of its members. Let $\gamma(n)$ be the cardinality of the  cyclic $\sigma^{n}$-minimal partition.
In the case that we have a substitutional shift $(X_\theta,\sigma)$, Dekking \cite[Lemma II.7]{dekking} shows that  $\gamma(r^n) =r^n$, and that if 
$P_0:=\theta^n(X_\theta)$, then $P_0$ generates 
a $\sigma^{r^n}$-cyclic partition.

The proof of the following lemma is very similar to that of  \cite[Lemma 7]{dekking}.

\begin{lem}\label{good-skeleton}
If $(Y, \sigma)$ is an infinite primitive $r$-automatic shift, then $\gamma(r^n)=r^n$.
\end{lem}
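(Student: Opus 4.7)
The plan is to follow the strategy of Dekking's proof of \cite[Lemma II.7]{dekking} and show that the candidate sets
\[ Q_i^{(n)} := \sigma^i\, \tau(\theta^n(X_\theta)) \subseteq Y, \qquad 0 \le i < r^n, \]
form a cyclic $\sigma^{r^n}$-minimal partition of $Y$, from which $\gamma(r^n) = r^n$ follows.

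First I would verify the partition properties that transfer cleanly from $X_\theta$ to $Y$. Coverage $\bigcup_i Q_i^{(n)} = Y$ follows from the surjectivity of $\tau$ together with Dekking's partition of $X_\theta$. The cyclic relation $\sigma Q_i^{(n)} = Q_{(i+1) \bmod r^n}^{(n)}$ reduces to the identity $\sigma^{r^n} \theta^n(X_\theta) = \theta^n(\sigma X_\theta) = \theta^n(X_\theta)$. Each $(Q_i^{(n)}, \sigma^{r^n})$ is minimal because it is a topological factor of $(\sigma^i \theta^n(X_\theta), \sigma^{r^n})$, which by recognizability of $\theta$ is conjugate to $(X_\theta, \sigma)$ via $\theta^n$.

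The remaining step is to show the $Q_i^{(n)}$ are pairwise distinct, equivalently pairwise disjoint (since two $\sigma^{r^n}$-minimal sets that meet must coincide). It suffices to prove $\sigma^k Q_0^{(n)} \neq Q_0^{(n)}$ for every $1 \le k < r^n$. Arguing by contradiction, suppose $\sigma^k Q_0^{(n)} = Q_0^{(n)}$ for some such $k$; unwound, this says that for every $x \in X_\theta$ there is $x' \in X_\theta$ with $\tau\theta^n(x) = \sigma^k \tau\theta^n(x')$.

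The hard part will be deriving from this identity a contradiction with the hypothesis that $Y$ is infinite. My approach is to translate the identity into a statement about the columns $C_j := \tau \circ (\theta^n)_j \colon \mathcal A \to \mathcal B$, namely that for every letter $a$ there is a length-$2$ word $a'a''$ in the language of $X_\theta$ for which $\tau\theta^n(a)$ occurs as the length-$r^n$ subword of $\tau\theta^n(a'a'')$ starting at position $k$. Following Dekking's strategy, iterating this relation together with the primitivity of $\theta$ should produce enough redundancy in the column codes to collapse the factor complexity of $Y$ to be uniformly bounded, which for a minimal shift forces $Y$ to be periodic, hence finite. The main technical step is making this column-collapse argument work for a general non-injective $\tau$, where coincidences of the form $\tau(a) = \tau(b)$ interact nontrivially with the substitution structure; I expect primitivity and recognizability of $\theta$ to be the main tools here.
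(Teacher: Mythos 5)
Your setup is fine as far as it goes: the sets $Q_i^{(n)}=\sigma^i\tau(\theta^n(X_\theta))$ do cover $Y$, are cyclically permuted by $\sigma$, and are each $\sigma^{r^n}$-minimal (being equivariant continuous images of $\sigma^{r^n}$-minimal sets), and the reduction of pairwise distinctness to $\sigma^kQ_0^{(n)}\neq Q_0^{(n)}$ is correct. But the entire content of the lemma sits in the step you defer: deriving a contradiction from $\sigma^kQ_0^{(n)}=Q_0^{(n)}$ with $1\le k<r^n$. What you offer there is a plan, not an argument ("should produce enough redundancy\ldots I expect primitivity and recognizability to be the main tools"), and it is not clear it can be completed as stated. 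In particular, a single such $k$ at a single level $n$ only gives $\gamma(r^n)\mid\gcd(k,r^n)$, i.e.\ $\gamma(r^n)\le r^n/p$ for some prime $p\mid r$; that alone does not bound the complexity of $Y$, let alone uniformly. Your stated target (uniformly bounded complexity) is both stronger than needed and harder to reach than what Morse--Hedlund actually requires, namely fewer than $m$ words of some length $m$.

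The paper's proof avoids the disjointness question entirely and works directly with $\gamma(r^n)$. Since $\tau(\theta^n(X_\theta))$ is a closed $\sigma^{r^n}$-minimal set, it is an atom of the cyclic $\sigma^{r^n}$-minimal partition, so the fixed-point image $v=\tau(u)$ satisfies $\sigma^{k\gamma(r^n)}(v)\in\tau(\theta^n(X_\theta))$ for all $k$; hence $v$ is an overlapping concatenation of blocks $\tau(\theta^n(a))$ spaced $\gamma(r^n)$ apart, which bounds the number of length-$r^n$ words of $\mathcal L_Y$ by $C+C^2(\gamma(r^n)-1)$ with $C=|\mathcal A|$. The decisive ingredient is then Dekking's Lemma 3(vi): if $\gamma(r^n)<r^n$ for some $n$, then $\gamma(r^n)/r^n\to 0$, so for large $n$ the complexity drops below $r^n$ and $Y$ is finite. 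Note that this argument never touches the non-injectivity of $\tau$, which is exactly the point where your "column-collapse" step would have to do delicate work. If you want to salvage your route, the honest way to finish is to import both the counting bound and Lemma 3(vi) anyway --- at which point the direct construction of the partition becomes superfluous.
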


\begin{proof}

First we show that there are at most $C+C^2(\gamma(r^n)-1)$ words of length $r^n$ in the language $\mathcal L_Y$ defined by $Y$. If $Y=\tau(X_\theta)$, with $\theta$ defined on $\mathcal A$ of cardinality $C$, let $v=\tau(u)$ for any $u$ bi-infinite and $\theta$-fixed. Now $v\in \tau(\theta^n(X_\theta))$ for each $n$. Therefore $\sigma^{k\gamma(r^n)}(v) \in\tau(\theta^n(X_\theta))$, so that $v$ is composed of overlapping blocks of the form $\theta^n(a)$ spaced at intervals $\gamma(r^n)$. As there are at most $C$ words $\theta^n(a)$, and  at most $C^2$ words $\theta^n(a)\theta^n(b)$, this implies that there are at most  $C+C^2(\gamma(r^n)-1)$ words of length $r^n$ in $\mathcal L_Y$. Now if $\gamma(r^n)<r^n$, then by \cite[Lemma 3(vi)]{dekking}, we have $\frac{\gamma(r^n)}{r^n}\rightarrow 0$. This implies that for $n$ large, $\mathcal L_Y$ contains fewer than $r^n$ words of length $r^n$, contradicting the assumption that $Y$ is infinite.
\end{proof}
We note that for each $k$, $\tau(\theta^k(X_\theta))$ is a closed, minimal  $\sigma^{ r^k   }$-invariant set. This follows from the fact $\theta^k(X_\theta)$ satisfies these properties. Also,  $\tau(\theta^k(X_\theta))$ must generate a $\sigma^{ r^k   }$-invariant partition. 

We fix the maximal equicontinuous factor  map that we consider henceforth. First suppose we consider a substitutional shift $(X_\theta,\sigma)$. 
 We  write $\Lambda_{r^n}(x)=i$ if $x\in \sigma^{i} \theta^{n}(X_\theta)$. 
Note that if  $\Lambda_n(x) = i$, then $\Lambda_{n+1}(x) \equiv i \mod r^{n}$.
Using this we can define $\pi(x):= \ldots x_2\,x_1\,x_0$ where for each 
$n\in \N$, $\Lambda_n(x)=\sum_{i=0}^{n-1} r^{i}x_i$.  

If $\theta$ has trivial height, we have defined a maximal equicontinuous factor map.  If $\theta$ has positive height $h$, then $X_\theta$ also has a $\sigma^h$-cyclic partition.
Here we choose an arbitrary but fixed  base, and use it to extend $\pi$ to a maximal equicontinuous factor map.

Finally if we consider an automatic shift $(Y,\sigma)$ generated by $\theta$, the convention we use is that we take the image of these partitions by $\tau$.

 Now by Lemma \cite[Lemma 11]{dekking}, $(X_\theta,\sigma) $ has no continuous irrational  eigenvalues, and so neither does $(Y,\sigma)$. Thus the maximal
equicontinuous factor of $(Y,\sigma)$ is generated by the  $\sigma^{n}$-minimal cyclic partitions \cite{ellis-gottschalk-1960}. On the one hand, the maximal
equicontinuous factor of $(Y,\sigma)$ must be a factor of $(\Z_r \times \Z/h\Z, +(1,1))$. On the other hand Lemma \ref{good-skeleton} tells us that each $(\Z/r^n\Z,+1)$ is a factor of 
$(Y, \sigma)$. We have proved

\begin{thm}\label{skeleton}
If $(Y, \sigma)$ is an infinite $r$-automatic shift generated by $(\theta,\tau)$ where $\theta$ is primitive and of height $h$, then $(\Z_r \times \Z/\bar{h}\Z,+(1,1))$ is the maximal equicontinuous factor of $(Y,\sigma)$ for some $\bar h$ dividing $h$. Furthermore, we can take the maximal equicontinuous factor maps $\pi_X$ and $\pi_Y$ to satisfy  $\pi_X=\pi_Y\circ \tau$.
\end{thm}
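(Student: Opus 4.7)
My plan is to identify the maximal equicontinuous factor (MEF) of $(Y,\sigma)$ by bracketing it from above and below, and then to verify the compatibility $\pi_X=\pi_Y\circ\tau$ by unwinding the conventions fixed just before the statement.

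For the upper bound, $\tau:(X_\theta,\sigma)\to(Y,\sigma)$ is a factor map, so the MEF of $(Y,\sigma)$ is a topological factor of the MEF of $(X_\theta,\sigma)$, which by Theorem \ref{thm:dekking} is $(\Z_r\times\Z/h\Z,+(1,1))$. Any such cyclic quotient has the form $(G\times \Z/\bar h\Z,+(1,1))$ with $\bar h$ dividing $h$ and $G$ a quotient of $\Z_r$. For the lower bound I would use that, by Dekking's Lemma 11, $(X_\theta,\sigma)$ has no irrational continuous eigenvalues, and this property passes to the factor $(Y,\sigma)$. Hence by the Ellis--Gottschalk theorem the MEF of $(Y,\sigma)$ is the inverse limit of its finite cyclic factors; these are exactly the partitions counted by $\gamma_Y$, and Lemma \ref{good-skeleton} tells us that $\gamma_Y(r^n)=r^n$ for every $n$. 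The corresponding tower $(\Z/r^n\Z,+1)$ is compatible under reduction $\!\!\mod r^n$, so its inverse limit $(\Z_r,+1)$ sits inside the MEF of $(Y,\sigma)$. Combining the two bounds forces $G=\Z_r$, and the MEF is $(\Z_r\times\Z/\bar h\Z,+(1,1))$ for some $\bar h\mid h$.

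For the compatibility of the factor maps, I would invoke the conventions set in the paragraph preceding the theorem: the $\sigma^{r^n}$-cyclic partition on $X_\theta$ is generated by $\theta^n(X_\theta)$, and the corresponding partition on $Y$ is its $\tau$-image $\tau(\theta^n(X_\theta))$. Since $\tau\circ\sigma=\sigma\circ\tau$ we have $\tau(\sigma^i\theta^n(X_\theta))=\sigma^i\tau(\theta^n(X_\theta))$, so the digit $\Lambda_n^X(x)$ used to define $\pi_X$ equals $\Lambda_n^Y(\tau(x))$ used to define $\pi_Y$. The $\Z/\bar h\Z$ coordinate is handled identically, taking the base partition on $Y$ to be the $\tau$-image of the fixed base on $X_\theta$.

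The only genuine obstacle I anticipate is not in either of the inequalities but in keeping the two MEF maps synchronised: without coordinating the arbitrary choices of base in the height partitions, one would only obtain $\pi_X$ and $\pi_Y\circ\tau$ agreeing up to a translation in $\Z_r\times\Z/\bar h\Z$. The point of defining the partitions on $Y$ as $\tau$-images of those on $X_\theta$ is precisely to remove that translation ambiguity, so the equality $\pi_X=\pi_Y\circ\tau$ holds by construction rather than requiring a separate argument.
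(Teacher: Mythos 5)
Your proof is correct and follows essentially the same route as the paper: the upper bound via the factor relation with Theorem \ref{thm:dekking}, the lower bound via the absence of irrational eigenvalues (Dekking's Lemma 11 plus Ellis--Gottschalk) combined with Lemma \ref{good-skeleton}, and the compatibility $\pi_X=\pi_Y\circ\tau$ by the convention of defining the partitions on $Y$ as $\tau$-images of those on $X_\theta$. Your explicit remark about removing the translation ambiguity is a useful clarification of what the paper handles implicitly by that convention.
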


\subsection{Automorphisms of constant length substitution shifts}

We recall  key ingredients that will be essential in our study of automorphism groups of automatic shifts.

Let $\End(X,\sigma)$ be the set of endomorphisms of $(X,\sigma)$.   
Given two shifts $(X,\sigma)$  and $(Y,\sigma)$, and let $\Fac(X,Y)$ denote the set
of factor maps from $(X,\sigma)$  to $(Y,\sigma)$. The following is proved in \cite[Theorem 3.3]{coven-quas-yassawi}.

\begin{thm}\label{thm:ethan}  
Let  $(X,\sigma)$  and $(Y,\sigma)$ be  infinite minimal 
shifts.
Suppose that the group rotation $(G,R)$ is the maximal 
equicontinuous factor of both $(X,\sigma)$ and $(Y,\sigma)$ and let
$\pi_X$ and $\pi_Y$ be the respective factor maps.
Then there is a map $\kappa:  \Fac(X,Y) \rightarrow G$
such that \[\pi_Y(\Phi(x))=\kappa(\Phi)+\pi_X(x) \] for all 
$x\in X$ and $\Phi  \in \Fac(X,Y)$. Also 
\begin{enumerate}
\item
 if $(Z,\sigma)$ is another 
shift which satisfies the assumptions on $(X,\sigma)$, then
$\kappa(\Psi\circ\Phi)=\kappa(\Psi)+\kappa(\Phi)$ for 
$\Phi \in \Fac (X,Y)$,  $\Psi \in
\Fac(Y,Z)$, and 
\item if $\min_{g\in G}|\pi_X^{-1}(g)|=\min_{g\in G}|\pi_Y^{-1}(g)|=c<\infty$,
then
$\kappa$ is at most  $c$-to-one. In particular, if $\pi_X$ and 
$\pi_Y$ are somewhere one-to-one, then $\kappa $ is an injection.
\end{enumerate}
\end{thm}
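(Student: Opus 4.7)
\textbf{Proof plan for Theorem \ref{thm:ethan}.}
For the existence of $\kappa$, for each $\Phi \in \Fac(X,Y)$ I consider the continuous function $F_\Phi : X \to G$ defined by $F_\Phi(x) := \pi_Y(\Phi(x)) - \pi_X(x)$. Writing $R(g)=g+g_0$ for the rotation generator $g_0\in G$, both $\pi_X$ and $\pi_Y\circ\Phi$ are factor maps from $(X,\sigma)$ onto $(G,R)$ --- the latter because $\Phi$ commutes with $\sigma$ and $\pi_Y$ intertwines $\sigma$ and $R$. Hence
\[
F_\Phi(\sigma x) \;=\; \bigl(\pi_Y(\Phi(x))+g_0\bigr) - \bigl(\pi_X(x)+g_0\bigr) \;=\; F_\Phi(x),
\]
so $F_\Phi$ is continuous and $\sigma$-invariant. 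Since $(X,\sigma)$ is minimal, $F_\Phi$ must be constant; call its value $\kappa(\Phi)$. This yields the identity $\pi_Y(\Phi(x)) = \kappa(\Phi) + \pi_X(x)$.

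Part (1) is then a direct computation: for $\Phi\in\Fac(X,Y)$ and $\Psi\in\Fac(Y,Z)$,
\[
\pi_Z(\Psi\circ\Phi(x)) \;=\; \kappa(\Psi) + \pi_Y(\Phi(x)) \;=\; \bigl(\kappa(\Psi)+\kappa(\Phi)\bigr) + \pi_X(x),
\]
and the uniqueness of $\kappa$ applied to $\Psi\circ\Phi$ forces $\kappa(\Psi\circ\Phi)=\kappa(\Psi)+\kappa(\Phi)$.

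For (2) I would argue by pigeonhole and minimality. Pick $g^\ast\in G$ realising the minimum $c = |\pi_Y^{-1}(g^\ast)|$, and suppose $\Phi_1,\dots,\Phi_{c+1}\in\Fac(X,Y)$ all satisfy $\kappa(\Phi_i) = g$ for a common $g\in G$. Using surjectivity of $\pi_X$, choose $x\in X$ with $\pi_X(x) = g^\ast - g$; then $\pi_Y(\Phi_i(x)) = g + \pi_X(x) = g^\ast$ for every $i$, so the $c+1$ points $\Phi_i(x)$ all lie in the $c$-element fiber $\pi_Y^{-1}(g^\ast)$. By pigeonhole there exist $i\neq j$ with $\Phi_i(x) = \Phi_j(x)$. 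The coincidence set $\{y\in X : \Phi_i(y)=\Phi_j(y)\}$ is closed, nonempty, and $\sigma$-invariant because both $\Phi_i$ and $\Phi_j$ commute with $\sigma$; minimality of $(X,\sigma)$ then forces it to be all of $X$, hence $\Phi_i=\Phi_j$. So at most $c$ distinct maps can share a given $\kappa$-value. The special case of somewhere one-to-one $\pi_X,\pi_Y$ is just $c=1$.

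The main obstacle I expect is ensuring that everything rests on the \emph{same} maximal equicontinuous factor $(G,R)$: in the existence step, it is essential that the map $\pi_Y\circ \Phi:(X,\sigma)\to(G,R)$ has the same target rotation as $\pi_X$, so that the two translation constants in the $\sigma$-invariance computation really cancel. Once that observation is in place, the rest of the argument reduces to minimality plus a straightforward pigeonhole.
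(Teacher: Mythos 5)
Your proof is correct, and it follows the standard argument for this result (which the paper itself does not reprove but cites from Coven--Quas--Yassawi): the continuous $\sigma$-invariant ``displacement'' $x\mapsto \pi_Y(\Phi(x))-\pi_X(x)$ is constant by minimality, and the $c$-to-one bound follows from the pigeonhole on a minimal fiber of $\pi_Y$ together with the fact that the coincidence set of two factor maps is closed, invariant, and hence all of $X$. No gaps.
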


A constant-length substitution is called \emph{injective} if $\theta(i)\ne
\theta(j)$ for any distinct $i$ and $j$ in $\mathcal A$. 
It will be convenient to deal with injective substitutions in what follows. 
The following theorem of Blanchard, Durand and Maass allows us to restrict 
our attention to that situation.

\begin{thm}[\cite{BDM}]\label{thm:injective}
Let $\theta$ be a constant-length substitution such that $X_\theta$ is 
infinite. Then there exists an injective constant length substitution $\theta'$
such that $(X_{\theta},\sigma)$ is topologically conjugate to 
$(X_{\theta'},\sigma)$. 
\end{thm}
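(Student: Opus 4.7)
The plan is to iteratively collapse letters of $\mathcal A$ that share the same $\theta$-image, relying on Moss\'e's recognizability theorem to certify that each collapse induces a topological conjugacy. Since the alphabet size strictly decreases at each step, after at most $|\mathcal A|$ iterations I arrive at an injective constant-length substitution, which will be the desired $\theta'$.

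First I would discard any letter that does not appear in any $x\in X_\theta$; this preserves primitivity and does not change $X_\theta$. Suppose $\theta$ is not injective, so there exist distinct $a,b\in\mathcal A$ with $\theta(a)=\theta(b)$. Let $\sim$ be the equivalence relation generated on $\mathcal A$ by declaring $a\sim b$ whenever $\theta(a)=\theta(b)$. A chain argument shows that $a\sim b$ still implies $\theta(a)=\theta(b)$ letter by letter, so $\theta$ descends to a well-defined length $r$ substitution $\tilde\theta$ on the quotient alphabet $\bar{\mathcal A}:=\mathcal A/\sim$, and primitivity of $\tilde\theta$ is inherited from that of $\theta$.

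Next I would show that the letter-to-letter extension $\tau:X_\theta\to\bar{\mathcal A}^{\Z}$ of the quotient map is a topological conjugacy onto $X_{\tilde\theta}$. It is continuous, commutes with $\sigma$, and intertwines $\theta$ with $\tilde\theta$, so the image of any $\theta$-periodic point is $\tilde\theta$-periodic; minimality then gives $\tau(X_\theta)=X_{\tilde\theta}$. For injectivity, let $\Theta:X_\theta\to X_\theta$ be the substitution map defined by $\Theta(x)_{rn+i}:=\theta(x_n)_i$. If $\tau(x)=\tau(y)$ then $x_n\sim y_n$ and hence $\theta(x_n)=\theta(y_n)$ letter by letter for every $n\in\Z$, so $\Theta(x)=\Theta(y)$. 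Moss\'e's recognizability theorem applies to any primitive aperiodic substitution shift without requiring injectivity of the substitution, and it asserts that $\Theta$ is injective on $X_\theta$; hence $x=y$, and $\tau$ is a conjugacy.

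Iterating, the alphabet strictly shrinks while topological conjugacy, and therefore infiniteness and aperiodicity of the shift, is preserved. The process must terminate at an injective $\theta'$ with $(X_{\theta'},\sigma)$ topologically conjugate to $(X_{\theta},\sigma)$, as required. The main technical input is Moss\'e's recognizability theorem in the possibly non-injective setting; everything else is a bookkeeping check that primitivity and constant length $r$ descend to the quotient. Aperiodicity of $\theta$, needed to invoke Moss\'e, follows automatically because a minimal infinite shift is aperiodic and $X_\theta$ is assumed infinite.
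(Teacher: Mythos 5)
The paper itself gives no proof of this statement: it is imported from \cite{BDM}, and the one place the construction is used internally (the proof of Theorem \ref{strongly-injective-rep}) performs exactly the letter-identification quotient you describe and defers the verification back to \cite{BDM}. Your argument is correct and is essentially that standard argument: collapse letters with equal images, check that primitivity and the length descend, get surjectivity of the quotient map from minimality, and get its injectivity from injectivity of the substitution map $\Theta$ on $X_\theta$. The one step you must source with care is the last one: you invoke recognizability for a primitive aperiodic substitution that is \emph{not} assumed injective, whereas the paper explicitly restricts its use of recognizability to the injective case (citing \cite{host}). This is not a gap---recognizability does hold without injectivity (Moss\'e \cite{Mosse}; a complete modern treatment is in Berth\'e--Steiner--Thuswaldner--Yassawi, \emph{Recognizability for sequences of morphisms})---but you must quote a proof that handles the non-injective case directly, because the usual shortcut of ``reduce to the injective case by identifying letters'' is precisely the statement you are proving, and invoking it here would be circular. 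Two minor simplifications: the relation $\theta(a)=\theta(b)$ is already an equivalence relation (it is the kernel of $a\mapsto\theta(a)$), so no chain argument is needed; and the iteration can be replaced by a single quotient by the stabilized relation $a\sim b$ iff $\theta^{n}(a)=\theta^{n}(b)$ for some $n$, which produces an injective substitution in one step.
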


We say that $\theta$ {\em has column number $c$ } if 
\begin{align}\label{column number}
	c = c(\theta) := \min_{ 0\leq j<r^k,\, k\in \N} \abs{\theta^k(\mathcal{A})_j}.
\end{align}
In other words,
  for some $k\in \N$, 
and some $(i_1, \ldots ,i_k)$,  
$|\theta_{i_1}\circ \ldots \circ\theta_{i_k}(\mathcal A)|=c$ and $c$ 
is the least such number.

The following proposition, \cite[Proposition 3.10]{coven-quas-yassawi} tells us that up to a shift, 
every automorphism has a small radius.

\begin{prop}\label{prop:radius_two} 
Let $\theta$  be an injective primitive, length $r$ substitution  of height one, 
and  such that $(X_\theta,\sigma)$ is infinite. 
If $\Phi\in \Aut (X_\theta,\sigma)$ has the property
that $\kappa(\Phi)\in\Z_r\setminus \Z$ is periodic, then $\Phi$ 
has right radius zero and left radius at most 1. 

Further, if $\kappa(\Phi)=k/(1-r^p)$ for $0<k<r^p-1$, one has
\begin{equation}\label{eq:commuting-relation}
\theta^{-c!p }\circ \sigma^{-k(1+r^p+r^{2p}+\ldots + r^{(c!-1)p})}\circ 
\Phi\circ \theta^{c!p} = \Phi,
\end{equation}
where $c$ is the column number.

If $\kappa(\Phi)=0$, then $\Phi$ has left and right radius at most 1 and 
$\Phi$ satisfies 
\begin{align} \label{eq:commuting-relation-2}
	\theta^{-c!} \circ \Phi \circ \theta^{c!} = \Phi.
\end{align}

\end{prop}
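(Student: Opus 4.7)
The plan is to construct from $\Phi$ an induced automorphism $\Phi^{(p)}$ of $X_\theta$ by desubstituting, show it has the same $\kappa$-value as $\Phi$, and invoke Theorem \ref{thm:ethan}(2) together with the column number $c$ to force that iterating the construction $c!$ times recovers $\Phi$. The self-similarity equation produced this way immediately yields the radius bound.

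Assume first $\kappa(\Phi) = k/(1 - r^p)$ with $0 < k < r^p - 1$. Reducing modulo $r^p$ in $\Z_r$ gives $\kappa(\Phi) \equiv k$, and since the $\sigma^{r^p}$-cyclic partition of $X_\theta$ is $\{\sigma^j \theta^p(X_\theta) : 0 \le j < r^p\}$ while $\pi(\Phi(\theta^p(y))) = \kappa(\Phi) + r^p \pi(y) \equiv k \pmod{r^p}$, one has $\Phi(\theta^p(y)) \in \sigma^k \theta^p(X_\theta)$ for every $y$. By injectivity and recognizability of $\theta$,
\[ \Phi^{(p)} := \theta^{-p} \circ \sigma^{-k} \circ \Phi \circ \theta^p \]
is a well-defined continuous self-map of $X_\theta$; using $\theta^p \sigma = \sigma^{r^p}\theta^p$ one checks it commutes with $\sigma$, so $\Phi^{(p)} \in \Aut(X_\theta,\sigma)$. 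Computing $\pi$-values gives $\kappa(\Phi^{(p)}) = (\kappa(\Phi) - k)/r^p$, and this equals $\kappa(\Phi)$ precisely because $\kappa(\Phi)(1 - r^p) = k$. By Theorem \ref{thm:ethan}(2) the fiber $\kappa^{-1}(\kappa(\Phi)) \subset \Aut(X_\theta,\sigma)$ has at most $c$ elements, and the map $\Psi \mapsto \Psi^{(p)}$ is injective on it (two automorphisms that agree on $\theta^p(X_\theta)$ agree everywhere, since they commute with $\sigma$), hence a bijection whose order divides $c!$. Expanding the $c!$-fold composition and collecting shifts via $\sigma^{-k}\theta^{-p} = \theta^{-p}\sigma^{-kr^p}$ produces the telescoping sum $k(1 + r^p + \cdots + r^{(c!-1)p})$ and yields (\ref{eq:commuting-relation}). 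The case $\kappa(\Phi) = 0$ runs identically with $p = 1$, $k = 0$, giving (\ref{eq:commuting-relation-2}).

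For the radius bound, rewrite (\ref{eq:commuting-relation}) as $\Phi = \theta^{-c!p} \sigma^{-K} \Phi \theta^{c!p}$ with $K := k(1 + r^p + \cdots + r^{(c!-1)p})$, noting that $k < r^p - 1$ forces $K \le r^{c!p} - 2$. If $\Phi$ has left radius $L$ and right radius $\rho$, then computing a single coordinate of $\Phi(x)$ from the right-hand side requires reading one length-$r^{c!p}$ block of $\sigma^{-K}\Phi(\theta^{c!p}(x))$; tracking which coordinates of $x$ are ultimately involved shows that $\Phi$ admits left radius at most $\lceil (K+L)/r^{c!p} \rceil$ and right radius at most $\lfloor (r^{c!p} - 1 - K + \rho)/r^{c!p} \rfloor$. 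Since $L, \rho$ are the actual minimal radii, these yield $L \le \lceil (K+L)/r^{c!p}\rceil$ and $\rho \le \lfloor (r^{c!p} - 1 - K + \rho)/r^{c!p}\rfloor$; combined with $1 \le K \le r^{c!p} - 2$, the first forces $L \le 1$ and the second forces $\rho = 0$. When $\kappa(\Phi) = 0$ one has $K = 0$ and the analogous inequalities give $L, \rho \le 1$.

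The main obstacle is the construction of $\Phi^{(p)}$: one must verify well-definedness and continuity (both injectivity and recognizability of $\theta$ are needed), check that it lies in $\Aut(X_\theta,\sigma)$ with the same $\kappa$-value, and correctly accumulate shifts through the $c!$-fold composition. The key algebraic input is the identity $\theta\sigma = \sigma^r\theta$, which after rewriting as $\sigma^{-k}\theta^{-p} = \theta^{-p}\sigma^{-kr^p}$ produces the geometric series appearing in (\ref{eq:commuting-relation}).
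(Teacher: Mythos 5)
Your argument is correct, and it reconstructs essentially the proof given in the cited source \cite[Proposition 3.10]{coven-quas-yassawi} (the present paper only quotes the result): the renormalization $\Psi\mapsto\theta^{-p}\sigma^{-k}\Psi\theta^{p}$ preserves the $\kappa$-value, acts injectively on the at-most-$c$-element fiber $\kappa^{-1}(\kappa(\Phi))$ so its $c!$-th iterate fixes $\Phi$, and the resulting commutation relation contracts the radius. The bookkeeping ($1\le K\le r^{c!p}-2$, the window estimates forcing $\rho=0$ and $L\le 1$, and the degenerate case $k=0$, $p=1$) all checks out.
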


\begin{example}\label{radius-can-be-large} In this example we show that for automatic shifts, it is no longer the case that automorphisms with a zero $\kappa$-value have radius at most one,
as in Proposition \ref{prop:radius_two}. Consider the substitution
 $\theta$
\begin{align*}
  \theta(a) &= ac\\
  \theta(b) &= bd\\
  \theta(c) &= ab\\
  \theta(d) &= ba.
\end{align*}
Now  $X_{\theta}$  has an automorphism $\Phi$ whose $\kappa$-value equals zero,  given by the local rule $f$ defined by the permutation $(ab)(cd)$.
Next we consider the coding $\tau: \{a,b,c,d\} \to \{x,y,z\}$ defined by
\[
  \tau(a) = x, \,\,\,\,\,
  \tau(b) = y, \,\,\,\,\,
  \tau(c) = x \,\,\mbox{ and } \,\,
  \tau(d) = z,
\]
and  let $Y$ be the automatic shift defined by $(\theta,\tau)$. One can verify  that $\tau$ is injective, so 
there exists an automorphism of $Y$ that corresponds to $\Phi$, whose local rule is given by $g$, partially given by
\begin{align*}
  g(y) &= x\\
  g(z) &= x.
\end{align*}
However, the local rule is a bit more complicated when considering the image of $x$ under $g$, as we need to consider the preceeding and following $2$ elements, and the rule is given in 
Table \ref{local-rule-table-large-radius}.
This table contains all appearing blocks of length $5$ such that the middle element is $x$. To be able to define this local rule, it is crucial that every appearence of such a block uniquely defines the preimage of the middle element.

\begin{table}[]
\begin{tabular}{|l||l|l|l|}
\hline
     & x & y & z \\ \hhline{|=#=|=|=|}
xxxx & y & z &   \\ \hline
xxxy & y & y &   \\ \hline
xyxx & y & y &   \\ \hline
yxxx &   & z &   \\ \hline
yxxy &   &   & z \\ \hline
yzxx & y & y &   \\ \hline
zxxx & z & z &   \\ \hline
zxxy &   &   & z \\ \hline
zyxx & y &   &   \\ \hline
zyxy &   &   & y \\ \hline
\end{tabular}
\vspace{0.5cm}
 \caption{The local rule for the factor map in Example \ref{radius-can-be-large}.}
\label{local-rule-table-large-radius}
\end{table}

\end{example}

The next result, \cite[Proposition 3.21]{coven-quas-yassawi} gives us conditions to check whether a local rule defines an element of $\Aut(X_\theta,\sigma)$.
If $\Phi$ has local rule $f$, left radius $\ell$ and right radius $r$,  we write $\Phi = \Phi^f_{\ell,r}$.
\begin{prop}\label{prop:last_nail_lemma}
Let $\theta$  be an injective primitive, length $r$ substitution on 
$\mathcal A$,  with column number $c$, of height one, and  
such that $(X_\theta,\sigma)$ 
is infinite. 
Let $p\ge 1$ and $0<k<r^p-1$. 

Let $f\colon \mathcal A^2\to \mathcal A$.
Then $\Phi:=\Phi^f_{1,0}$ satisfies $\Phi\in \Aut(X_\theta,\sigma)$  
and $\kappa(\Phi)=k/(1-r^p)$ if and only if:
\begin{enumerate}
\item 
if $x_0x_1x_2\in\mathcal L_{X_\theta}$, then $f(x_0,x_1)f(x_1,x_2)\in
\mathcal L_{X_\theta}$; and \label{cond:fblock}
\item For each $x_{-1}x_0x_1\in\mathcal L(X_\theta)$ and $0\le i<r^{c!p}$
\label{cond:crelf}
$$
f(\theta^{c!p}(x_{-1}\cdot x_0)_{i-1},\theta^{c!p}(x_{-1}\cdot x_0)_i)
=\theta^{c!p}(f(x_{-1},x_0)f(x_0,x_1))_{N+i},
$$
where $N=k(1+r^p+\ldots+r^{(c!-1)p})$ and $x_{-1}\cdot x_0$ denotes a 
finite segment of a bi-infinite sequence with $x_{-1}$ in the $-1$st coordinate
and $x_0$ in the $0$-th coordinate.
\end{enumerate}
Similarly given $g\colon \mathcal A^3\to \mathcal A$, $\Phi^g_{1,1}$ 
satisfies $\Phi^g_{1,1}\in\Aut(X_\theta,\sigma)$
and $\kappa(\Phi^g_{1,1})=0$ if and only if 
\begin{enumerate}
\setcounter{enumi}{2}
\item $g(x_0,x_1,x_2)g(x_1,x_2,x_3)\in\mathcal L_{X_\theta}$
whenever $x_0x_1x_2x_3\in\mathcal L_{X_\theta}$; and
\label{cond:gblock}
\item For any $x_{-1}x_0x_1\in\mathcal L_{X_\theta}$ and $0\le i<r^{c!}$, 
\label{cond:crelg}
\begin{equation*}
g(u_{i-1},u_i,u_{i+1})=
(\theta^{c!}g(x_{-1},x_0,x_1))_i \, ,
\end{equation*}
where $u=u_{-r^{c!}}u_{-r^{c!}+1}\ldots u_{-1}\cdot u_0\ldots
u_{2r^{c!}-1}$ is the word 
obtained by applying $\theta^{c!}$ to the word $x_{-1}\cdot x_0x_1$.
\end{enumerate}
\end{prop}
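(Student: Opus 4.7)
The plan is to prove both directions of the equivalence, treating the $f$-case in detail; the $g$-case proceeds by the parallel argument with $N=0$ and commuting relation (\ref{eq:commuting-relation-2}) in place of (\ref{eq:commuting-relation}).

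For necessity, assume $\Phi=\Phi^f_{1,0}\in\Aut(X_\theta,\sigma)$ with $\kappa(\Phi)=k/(1-r^p)$. Condition (\ref{cond:fblock}) is immediate: given $x_0x_1x_2\in\mathcal L_{X_\theta}$ extended to some $x\in X_\theta$, the word $f(x_0,x_1)f(x_1,x_2)=\Phi(x)_1\Phi(x)_2$ is a 2-subword of $\Phi(x)\in X_\theta$. Condition (\ref{cond:crelf}) follows by rewriting equation (\ref{eq:commuting-relation}) as $\Phi\circ\theta^{c!p}=\sigma^N\circ\theta^{c!p}\circ\Phi$ and reading off coordinates $0\le i<r^{c!p}$ on any $x\in X_\theta$ carrying $x_{-1}x_0x_1$ at positions $-1,0,1$.

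For sufficiency, define $\Phi:=\Phi^f_{1,0}$ as a continuous shift-commuting map $\mathcal A^\Z\to\mathcal A^\Z$ and proceed in two stages. First, upgrade condition (\ref{cond:crelf}) to the global identity $\Phi\circ\theta^{c!p}(y)=\sigma^N\circ\theta^{c!p}\circ\Phi(y)$ for every $y\in X_\theta$: writing $i=jr^{c!p}+i'$ with $0\le i'<r^{c!p}$, the two symbols $\theta^{c!p}(y)_{i-1},\theta^{c!p}(y)_{i}$ sit inside $\theta^{c!p}(y_{j-1}\cdot y_j)$, and condition (\ref{cond:crelf}) applied to $y_{j-1}y_jy_{j+1}$ produces the coordinate-wise equality; the inequality $N<r^{c!p}$ (from $k<r^p-1$) guarantees the right-hand side glues consistently across neighbouring blocks. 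Iterating yields $\Phi\circ\theta^{mc!p}=\sigma^{N_m}\circ\theta^{mc!p}\circ\Phi$ with $N_m=N(1+r^{c!p}+\cdots+r^{(m-1)c!p})$. Second, deduce $\Phi(X_\theta)\subseteq X_\theta$: for $x\in X_\theta$ and fixed window length $M$, choose $m$ large and use iterated recognizability to write $x=\sigma^s\theta^{mc!p}(y)$, so that $\Phi(x)_0\ldots\Phi(x)_M$ is a subword of $\theta^{mc!p}(\Phi(y)_q\Phi(y)_{q+1})$ for some $q$; condition (\ref{cond:fblock}) applied to $y_{q-1}y_qy_{q+1}$ places $\Phi(y)_q\Phi(y)_{q+1}$ in $\mathcal L_{X_\theta}$, and hence so does our block. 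The value $\kappa(\Phi)=k/(1-r^p)$ now drops out by applying $\pi$ to the commuting relation and solving $(1-r^{c!p})\kappa(\Phi)=N$.

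The main obstacle is establishing that this endomorphism is in fact a bijection. Surjectivity is clear from minimality of $X_\theta$. For injectivity I would first handle the $\kappa=0$ sub-case (the $g$-half of the proposition) via a finite-semigroup argument: $\kappa=0$ endomorphisms have radius at most one by Proposition \ref{prop:radius_two}, hence live in a finite set, and any surjective element of a finite semigroup of self-maps is a bijection (some power is idempotent, and surjectivity promotes that idempotent to the identity). The $\kappa=k/(1-r^p)$ case is then reduced via $\kappa(\Phi^{r^p-1}\circ\sigma^k)=(r^p-1)\cdot k/(1-r^p)+k=0$, so $\Phi^{r^p-1}\circ\sigma^k$ is a bijection by the $\kappa=0$ case, forcing $\Phi$ itself to be a bijection. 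The $g$-case of the proposition is proved by the same blueprint, with condition (\ref{cond:crelg}) yielding the pure commutation $\Phi\circ\theta^{c!}=\theta^{c!}\circ\Phi$ and condition (\ref{cond:gblock}) in place of (\ref{cond:fblock}).
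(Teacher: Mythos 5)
The paper does not actually prove this proposition; it is imported verbatim from \cite[Proposition 3.21]{coven-quas-yassawi}, so there is no in-paper argument to compare against. Judged on its own terms, your proposal has the right architecture and most of it is correct: necessity of (\ref{cond:fblock}) and (\ref{cond:crelf}) from the local-rule form of $\Phi$ together with \eqref{eq:commuting-relation}; sufficiency by globalising condition (\ref{cond:crelf}) to $\Phi\circ\theta^{c!p}=\sigma^{N}\circ\theta^{c!p}\circ\Phi$ (your observation that $N<r^{c!p}$ is exactly what makes the two-block windows glue is the right one), iterating, and combining recognizability with condition (\ref{cond:fblock}) to force every finite window of $\Phi(x)$ into $\mathcal L_{X_\theta}$; surjectivity from minimality; and the value of $\kappa(\Phi)$ by pushing the commutation relation through $\pi$ and solving $(1-r^{c!p})\kappa(\Phi)=N$.

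The one genuine gap is the injectivity step. You justify the finiteness of the semigroup $\{\Phi^n\}$ by saying that ``$\kappa=0$ endomorphisms have radius at most one by Proposition \ref{prop:radius_two},'' but that proposition is stated only for $\Phi\in\Aut(X_\theta,\sigma)$, and at this point you do not yet know that $\Phi$, its powers $\Phi^n$ (whose naive radii grow linearly in $n$), or the auxiliary map $\Phi^{r^p-1}\circ\sigma^{k}$ are automorphisms --- that is precisely what is being proved. Without a uniform radius bound valid for \emph{endomorphisms}, the set $\{\Phi^n\}$ is not known to be finite and the idempotent-power trick does not start. The gap is fillable in at least two ways: quote coalescence of linearly recurrent (hence primitive substitution) shifts, which makes every endomorphism an automorphism outright; or re-derive the radius bound for endomorphisms from the relation $\Phi^n\circ\theta^{mc!p}=\sigma^{N_m}\circ\theta^{mc!p}\circ\Phi^n$ together with recognizability, since conjugation by $\theta^{mc!p}$ contracts the radius by a factor $r^{mc!p}$ up to an additive recognizability constant, forcing a bound independent of $n$. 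As written, however, the argument leans at its crucial point on a proposition that does not apply.
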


\begin{remark}
Proposition \ref{prop:radius_two}  implies that if $\Phi$ is a (topological) automorphism, then it is also a measure preserving (measurable) automorphism of 
$(X_\theta, \sigma,\mu)$, where $\mu$  is the unique $\sigma$-invariant measure on $X_\theta$. Conversely, Host and Parreau \cite{HP} show that if $\Phi$ is a measure preserving (measurable) automorphism of $(X_\theta, \sigma,\mu)$, with the additional conditions that $\theta$ has column number at least two, and that $\theta$ is {\em reduced}, then $\Phi$ is almost everywhere defined by a sliding block code of radius one, and that it satisfies a version of \eqref{eq:commuting-relation} or~\eqref{eq:commuting-relation-2}.    Now Proposition \ref{prop:last_nail_lemma} tells us that $\Phi$ is therefore (almost everywhere equal to) a topological automorphism. We note that reduced substitutions (defined in \cite{HP}) are always strongly injective, a notion we define and study in Section \ref{strongly-injective-section}.
\end{remark}

\section{Topological factors of substitution shifts}\label{automatic-section}

In this section we study automorphism groups of factors of constant-length substitution dynamical systems. We show in particular that we can both produce new automorphisms, and lose automorphisms, when we pass to a factor. Moreover, we show that any shift factor is indeed topologically conjugate to a constant length substitution dynamical system, via a code, i.e. a letter-to letter map, i.e. it is an automatic shift, so that we can explicitly describe its automorphism group. In the last part we consider the special case when the factor map is uniform-to-$1$.

\subsection{Automorphism groups and factors}
Our motivation for the main results in Section \ref{automatic-section} arose as we noticed that in general there is no relationship between the automorphism group of a substitution shift $(X_{\theta}, \sigma)$ and that of one of its topological factors $(Y, \sigma) = (\tau(X_{\theta}), \sigma)$. We summarise this with the following two examples.

\begin{example}{\bf  An example where ${\bf \Aut(X_\theta,\sigma)\backslash \Aut(Y,\sigma)}$ is nonempty.}
Define $\theta$ as
\begin{align*}
  \theta(a) &= ad\\
  \theta(b) &= bc\\
  \theta(c) &= ab\\
  \theta(d) &= ba.
\end{align*}
This substitution has column number 2 and the fixed points, namely $b.a$, $c.a$, $a.b$, $d.b$, comprise  the only  $\pi$-fibre with more than two elements, where $\pi:X_\theta\rightarrow \Z_2$ is the maximal equicontinuous factor map. Now $X_\theta$ has a nontrivial automorphism $\Phi\in \Aut(X_\theta,\sigma)$ corresponding to the local rule given by the letter-exchanging map $(ab)(cd)$. Note that $\Phi$ has $\kappa$-value zero, and it acts by permuting elements within the fibres of $X_\theta$.

Now consider the projection $\tau: \{a,b,c,d\} \to \{x,y\}$ given by
$  \tau(a) =\tau(b) =  x$, 
  $\tau(c) = \tau(d) =y$.
 We find that $Y = X_{\bar{\theta}}$ where
\begin{align*}
  \bar{\theta}(x) &= xy\\
  \bar{\theta}(y) &= xx,
\end{align*}
where $\theta$ has two fixed points $x.x$ and $y.x$.  Since $\theta$ has a coincidence, i.e. $c(\theta) = 1$, it does not have any nontrivial automorphisms with a zero $\kappa$-value. In other words, The map $\tau$ is everywhere two-to-one, and it collapses the fibres for $X_\theta$.  
Note that if $\Phi$ is sent to an automorphism via $\tau$,  it must be an automorphism which also permutes within fibres. There is no non-trivial $\Phi' \in \Aut(Y,\sigma)$ such that $\kappa(\Phi') = 0$ as $\bar{\theta}$ has a coincidence.  In other words, here both the identity map and $\Phi$ are collapsed by $\tau$ to the identity.
\end{example}

It is appropriate to put the next example here, even though our claims use ideas from Section \ref{strongly-injective-section}.
\begin{example}{\bf An example where ${\bf  \Aut(Y,\sigma)\backslash \Aut(X_\theta,\sigma)}$ is nonempty.}
We consider the following primitive substitution $\theta$

\begin{align*}
  \theta(a) &= aac\\
  \theta(b) &= bca\\
  \theta(c) &= bba.
\end{align*}
Note that this substitution is {\em strongly injective} (Definition \ref{strongly}), and so if there exists $\Phi \in \Aut(X_{\theta})$ with $\kappa(\Phi) = 0$ then  by Theorem~\ref{characterisation-of-kernel}, it must have radius $0$, i.e. it must exchange letters. Now the fixed points of $\theta$ are $a.a,c.a,a.b,c.b$ and so if there is a non-trivial automorphism it must send $a$ to $b$ and $a$ to $c$, a contradiction. Thus $\Phi=\Id$.
However, now we consider the coding $\tau(a)=x$, $\tau(b)=\tau(c)=y$,
and we find that $Y = X_{\bar{\theta}}$ where 
\begin{align*}
  \bar{\theta}(x) &= xxy\\
  \bar{\theta}(y) &= yyx.
\end{align*}
One finds easily that there exists a nontrivial $\Phi \in \Aut(Y,\sigma)$ with $\kappa$-value zero, namely $(xy)$.

We remark here that  $\Phi\in \Aut(Y,\sigma)$ does define a measurable  automorphism $F$ of $(X_\theta,\sigma)$. In particular the only points where $F$ is not defined are on  the orbits of the fixed points, and also the orbit of the points in $\pi^{-1}(\ldots 111)=\pi^{-1}(-\frac{1}{2})$, with $\pi:X_\theta\rightarrow \Z_3$ the maximal equicontinuous factor map. The map $F$ is not defined almost everywhere by a radius one sliding block code (or by any finite sliding block code, for that matter), which highlights that the result of Host and Parreau~\cite{HP} is not true for not-reduced substitutions.
\end{example}

\subsection{Automatic shifts as substitution shifts}

In this section we prove that any shift factor of a primitive constant length substitution shift is  topologically conjugate, via a radius zero code, to a constant length substitution shift.  Note that by Lemma \ref{sliding-block-rep}, we can assume that the factor is automatic to begin with.
\begin{definition}
  We call $a \neq b \in \mathcal{A}$ a \emph{periodic pair} if there exists $p=p(a,b)$ and $j< r^{p}$ such that $\theta^{p}(a)_j = a$ and $\theta^{p}(b)_j = b$.
Note that this is equivalent to the requirement that
  for all $k > 0$ there exists $j< r^{k  p}$ such that $\theta^{k  p}(a)_j = a$ and $\theta^{k  p}(b)_j = b$. 
  We define 
  \begin{align*}
    p(\theta) = lcm\{p(a,b): a,b \text{ are a periodic pair}\},
  \end{align*}
  and call a substitution $\theta$ with $p(\theta) = 1$ \emph{pair-aperiodic}.
\end{definition}

Readers who are familiar with techniques used in substitutions will be aware that one often replaces  a substitution $\theta$ by an appropriate power so that, without loss of generality, one can assume that all $\theta$-periodic points are $\theta$-fixed.  In a similar spirit, we show in the following lemma that  without loss of generality 
  we can work with pair-aperiodic substitutions.
\begin{lem}\label{pair-aperiodic}
  Let $\theta$ be a substitution of constant length. Then $\theta^{p(\theta)}$ is pair-aperiodic.
\end{lem}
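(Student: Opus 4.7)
The plan is to show that every periodic pair of $\theta^{p(\theta)}$ has period $1$ (under $\theta^{p(\theta)}$), whence $p(\theta^{p(\theta)})$, being the $\mathrm{lcm}$ over all such pairs, equals $1$. The key tool is the equivalent reformulation built into the definition of a periodic pair: if $p$ is a period for $(a,b)$ under $\theta$, then so is every positive multiple $kp$.

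First I would unravel the definitions. If $(a,b)$ is a periodic pair for $\theta^{p(\theta)}$ with witness exponent $q$ and index $j < r^{p(\theta)q}$, then $(\theta^{p(\theta)})^{q}(a)_{j} = a$ and $(\theta^{p(\theta)})^{q}(b)_{j} = b$, so $(a,b)$ is in particular a periodic pair for $\theta$ (with the period $p(\theta)\cdot q$). Conversely, any periodic pair $(a,b)$ of $\theta$ has an associated period $p(a,b)$ which, by construction of $p(\theta)$ as an $\mathrm{lcm}$, divides $p(\theta)$; the equivalent reformulation then promotes the single period $p(a,b)$ to the whole set $\{k\cdot p(a,b) : k\geq 1\}$ of periods, and in particular forces $p(\theta)$ itself to be a period. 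Hence there exists $j < r^{p(\theta)}$ with $\theta^{p(\theta)}(a)_{j} = a$ and $\theta^{p(\theta)}(b)_{j} = b$.

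Read now as a statement about the substitution $\theta^{p(\theta)}$ of length $r^{p(\theta)}$, this says precisely that $(a,b)$ is a periodic pair for $\theta^{p(\theta)}$ with witness exponent $q = 1$ and index $j$. Thus every periodic pair of $\theta^{p(\theta)}$ admits period $1$, the $\mathrm{lcm}$ over all such pairs is $1$, and $p(\theta^{p(\theta)}) = 1$ as required. I do not anticipate any serious obstacle: the argument is really just bookkeeping with the definitions, and the only point worth being careful about is the invocation of the equivalent condition from the definition to pass from $p(a,b)$ being a period to $p(\theta)$ being a period.
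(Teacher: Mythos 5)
Your proof is correct and follows essentially the same route as the paper's: both arguments rest on the two observations that a periodic pair for $\theta^{p(\theta)}$ is a periodic pair for $\theta$, and that a periodic pair for $\theta$ with period $p_1$ dividing $n$ is a periodic pair for $\theta^n$ with period $1$ (the paper states this in the slightly more general form $p_2 = p_1/\gcd(p_1,n)$). Your version simply spells out the bookkeeping that the paper's two-line proof leaves implicit.
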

\begin{proof}
  A simple computation shows that if $a,b$ are a periodic pair for $\theta$ with period $p_1$, then they are also a periodic pair for $\theta^n$ with period $p_2 = p_1/\gcd(p_1,n)$.
  Furthermore, one finds directly that any periodic pair for $\theta^n$ is also a periodic pair for $\theta$.
  The result follows.
\end{proof}

\begin{definition}
  We call $a,b \in \mathcal{A}$ an \em{asymptotic disjoint pair} if for all $k > 0$ there exists 
  $j_k<r^k$ such that $\theta^k(a)_{j_k} \neq \theta^k(b)_{j_k}$.
\end{definition}
\begin{definition}
We call a pair $(c,d) \in \mathcal{A}^2$ $k$-reachable  from $(a,b) \in \mathcal{A}^2$ if there exists $j<r^k$ such that $\theta^k(a)_j = c$ and $\theta^k(b)_j = d$.

\end{definition}

  It is straightforward to check that reachability is transitive, i.e. if $(b_1,b_2)$ is $k_1$-reachable from $(a_1,a_2)$ and $(c_1,c_2)$ is $k_2$-reachable from $(b_1,b_2)$, then $(c_1,c_2)$ is $(k_1+k_2)$-reachable from $(a_1,a_2)$.

\begin{lem}\label{le:disjoint}
  Let $\theta$ be a pair-aperiodic substitution of constant length and $(a,b)$ an asymptotic disjoint pair. 
   Then there exists a periodic pair $(a',b')$ such that for any $k \geq 2|\mathcal{A}|^2$, and any $(c,d)$ that is reachable from $(a',b')$, it is also $k$-reachable from $(a,b)$. 
   \end{lem}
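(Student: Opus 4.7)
The plan is to interpret reachability as walks in a ``pair graph'' on $\mathcal{A}\times\mathcal{A}$ whose edges are $(x,y)\to(\theta_i(x),\theta_i(y))$ for $0\le i<r$; then $(c,d)$ being $k$-reachable from $(a,b)$ is exactly the existence of a walk of length $k$ in this graph from $(a,b)$ to $(c,d)$. Let $G'$ denote the subgraph induced by the non-diagonal pairs. Since the diagonal is forward-invariant (once the two coordinates agree, they agree forever), any walk ending at a non-diagonal pair stays inside $G'$; thus the asymptotic disjoint assumption is precisely the statement that walks of every length from $(a,b)$ exist inside $G'$.

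First I will locate the periodic pair. Fixing a walk in $G'$ from $(a,b)$ of length $|\mathcal{A}|^2$, the pigeonhole principle forces some non-diagonal pair $(a',b')$ to appear at two times $0\le i_1<i_2\le |\mathcal{A}|^2$. Hence $(a',b')$ is $(i_2-i_1)$-reachable from itself, so it is a periodic pair, and it is simultaneously $i_1$-reachable from $(a,b)$. Pair-aperiodicity now upgrades the period to $1$: there exists $j_0<r$ with $\theta(a')_{j_0}=a'$ and $\theta(b')_{j_0}=b'$, a self-loop of length $1$ at $(a',b')$ in $G$. Iterating the self-loop and invoking transitivity makes $(a',b')$ $s$-reachable from itself for every $s\ge 0$.

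For the final step, let $(c,d)$ be reachable from $(a',b')$. A shortest realising walk in $G$ is simple, hence of length $\ell\le |\mathcal{A}|^2-1$. For any $k\ge 2|\mathcal{A}|^2$, set $s:=k-i_1-\ell$; since $i_1+\ell\le 2(|\mathcal{A}|^2-1)<2|\mathcal{A}|^2\le k$, we have $s\ge 0$. Concatenating the walk of length $i_1$ from $(a,b)$ to $(a',b')$, the length-$1$ self-loop at $(a',b')$ iterated $s$ times, and the shortest walk of length $\ell$ from $(a',b')$ to $(c,d)$ produces a walk of total length exactly $k$ from $(a,b)$ to $(c,d)$; by transitivity of reachability this is precisely the required $k$-reachability.

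The only genuine subtlety is ensuring that the pair $(a',b')$ produced by pigeonhole is non-diagonal: this is exactly what asymptotic disjointness delivers, by trapping the walk inside $G'$. Pair-aperiodicity is what converts ``some period'' into ``period~$1$,'' yielding the length-one self-loop that is essential for inflating walks to arbitrarily large lengths. The constant $2|\mathcal{A}|^2$ in the hypothesis is then simply the sum of the pigeonhole visit time $i_1$ and the simple-path length bound $\ell$.
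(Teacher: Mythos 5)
Your proof is correct and is essentially the paper's argument recast in the language of a pair graph: the same pigeonhole on a consistent chain of off-diagonal pairs to locate the periodic pair $(a',b')$, the same use of pair-aperiodicity to obtain a length-one self-loop, and the same pigeonhole/shortest-walk bound $\le |\mathcal{A}|^2$ on reaching $(c,d)$ from $(a',b')$, glued together by transitivity. No gaps.
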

\begin{proof}
  As $(a,b)$ is an asymptotic disjoint pair, there exists for any $k \in \N$ some $j_k$ such that $\theta^k(a)_{j_k} \neq \theta^k(b)_{j_k}$. We consider the finite sequence  $(\theta^k(a)_{j_k},\theta^k(b)_{j_k})_{k=0,\ldots,|\mathcal{A}|^2}$. We note that we can choose the $j_k$ such that $j_k = \lfloor j_{k+1}/r \rfloor$.
 (For example one first chooses $j_{|\mathcal A|^{2}}$.)

  As every element in this sequence belongs to $\mathcal{A}^2$ and the sequence has $|\mathcal{A}|^2 +1$ elements, there exists $(a',b')$ and $0\leq k_1 < k_2 \leq |\mathcal{A}|^2$ such that
  $\theta^{k_i}(a)_{j_{k_i}} = a'$ and $\theta^{k_i}(b)_{j_{k_i}} = b'$ for $i = 1,2$. We claim that  $(a',b')$ are a periodic pair. For,
  we know that $j_{k_2} = j_{k_1} r^{k_2-k_1} + j'$, which gives
  \begin{align*}
    a' &= \theta^{k_2}(a)_{j_{k_2}} = \theta^{k_1 + (k_2 -k_1)}(a)_{j_{k_1} r^{k_2-k_1} + j'}  = \theta^{k_2-k_1}(\theta^{k_1}(a)_{j_{k_1}})_{j'}\\
    & = \theta^{k_2 - k_1}(a')_{j'},
  \end{align*}
  and the analogous statement for $b, b'$, and the claim follows.
   Since $\theta$ is pair-aperiodic, $(a',b')$ is $1$-reachable from $(a',b')$.
  By the transitivity of reachability it follows that $(a',b')$ is reachable from $(a,b)$ by $k$ steps for any $k \geq |\mathcal{A}|^2$.
  Finally, if $(c,d)$ is reachable from $(a',b')$, then by the pigeonhole principle,  it must be $k'$- reachable for some $k' \leq |\mathcal{A}|^2$.  The result follows by transitivity.
\end{proof}

In this last proof, we used pair-aperioidicty strongly. We use it in a similar fashion in the next lemma.
\begin{lem}\label{le:not_disjoint} 
Let
 $\theta$  be a primitive, pair-aperiodic substitution. If $(a,b)$ is not an asymptotic disjoint pair, then $\theta^{|\mathcal{A}|^2}(a) = \theta^{|\mathcal{A}|^2}(b)$.
\end{lem}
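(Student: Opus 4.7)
The plan is to turn the negation of the asymptotic disjoint property into a concrete bound on the first exponent at which $\theta^k(a)$ and $\theta^k(b)$ agree. Let $k^\ast$ denote the least positive integer with $\theta^{k^\ast}(a)=\theta^{k^\ast}(b)$ (which exists because $(a,b)$ is not asymptotic disjoint). Once $\theta^{k^\ast}(a)=\theta^{k^\ast}(b)$, we have $\theta^n(a)=\theta^n(b)$ for every $n\geq k^\ast$, so it suffices to establish the bound $k^\ast\leq |\mathcal A|^2$.

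I will argue by contradiction, assuming $k^\ast>|\mathcal A|^2$, and mimic the pigeonhole construction from the proof of Lemma~\ref{le:disjoint}. For each $0\leq k<k^\ast$ we have $\theta^k(a)\neq\theta^k(b)$, so some index witnesses this. I would build these witnesses in a compatible tower, top-down: pick any $j_{k^\ast-1}<r^{k^\ast-1}$ with $\theta^{k^\ast-1}(a)_{j_{k^\ast-1}}\neq\theta^{k^\ast-1}(b)_{j_{k^\ast-1}}$, and then set $j_k=\lfloor j_{k+1}/r\rfloor$. The contrapositive of the observation that $\theta^k(a)_{j_k}=\theta^k(b)_{j_k}$ forces equality of the corresponding length-$r$ blocks under $\theta$ guarantees $\theta^k(a)_{j_k}\neq\theta^k(b)_{j_k}$ all the way down.

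Now, in the sequence of $k^\ast>|\mathcal A|^2$ pairs $(\theta^k(a)_{j_k},\theta^k(b)_{j_k})\in\mathcal A^2$, two of them coincide at indices $0\leq k_1<k_2\leq|\mathcal A|^2$; call this common value $(a',b')$ and note $a'\neq b'$. Exactly as in Lemma~\ref{le:disjoint}, computing $j_{k_2}=j_{k_1}r^{k_2-k_1}+j'$ and factoring $\theta^{k_2}=\theta^{k_2-k_1}\circ\theta^{k_1}$ shows that $(a',b')$ is a periodic pair with period $k_2-k_1$. By pair-aperiodicity, $(a',b')$ is in fact $1$-reachable from itself, and iterating this self-reach we conclude that $(a',b')$ is $m$-reachable from $(a',b')$ for every $m\geq 1$.

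The contradiction is then immediate from transitivity of reachability. Since $(a',b')$ is $k_1$-reachable from $(a,b)$ and $(k^\ast-k_1)$-reachable (with $k^\ast-k_1\geq 1$) from itself, $(a',b')$ is $k^\ast$-reachable from $(a,b)$; that is, some index $j<r^{k^\ast}$ satisfies $\theta^{k^\ast}(a)_j=a'$ and $\theta^{k^\ast}(b)_j=b'$. But $\theta^{k^\ast}(a)=\theta^{k^\ast}(b)$ forces $a'=b'$, contradicting $a'\neq b'$. Hence $k^\ast\leq|\mathcal A|^2$ and the claim follows. The main (and only) delicate point is the compatible top-down choice of the witnesses $j_k$, which is what turns the isolated non-equality at the top level into a whole orbit of non-equal pairs available for the pigeonhole argument; everything else reuses tools already set up in the paper.
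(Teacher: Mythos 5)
Your proof is correct and follows essentially the same route as the paper's: take the minimal $K$ with $\theta^{K}(a)=\theta^{K}(b)$, assume $K>|\mathcal A|^{2}$, build a compatible tower of disagreement witnesses, pigeonhole to extract a periodic pair $(a',b')$ with $a'\neq b'$, and use pair-aperiodicity plus transitivity of reachability to propagate the disagreement up to level $K$, contradicting $\theta^{K}(a)=\theta^{K}(b)$. Your write-up is in fact slightly more explicit than the paper's (the top-down choice $j_k=\lfloor j_{k+1}/r\rfloor$ and the final reachability count are spelled out), but the argument is the same.
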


\begin{proof}
Since $(a,b)$ is not asymptotic disjoint, there exists a minimal $K$ such that $\theta^K(a)=\theta^K(b)$. Suppose that $K> |\mathcal A|^2$. Then for each $0\leq i\leq K-1$ there is a $j_i$ such that $\alpha_i:=(\theta^{i}(a))_{j_i}\neq \beta_i :=(\theta^{i}(a))_{j_i}$  
and where if $i< \bar i$ then $(\alpha_{\bar i},\beta_{\bar i})$ is reachable from $(\alpha_i,\beta_i)$. Since $K-1\geq  |\mathcal A|^2$, we have $(\alpha_i,\beta_i)= (\alpha_{\bar i},\beta_{\bar i})$ for some $i< \bar i$. Thus $(\alpha_i,\beta_i)$ is a periodic pair. But now since $\theta$ is pair aperiodic, for each $n$ $(\alpha_i,\beta_i)$ appears as $(\theta^n(a))_{k_n}=(\theta^n(b))_{k_n}$ for some $k_n$. This contradicts the fact that $\theta^K(a)=\theta^K(b)$.
\end{proof}

\begin{definition}
Let $(\theta,\tau)$ be an $r$-automatic pair.
  We call two letters $a\neq b $ \emph{indistinguishable} (by $(\theta,\tau)$ ) if for any $k \geq 0$ and  $0\leq j< r^k$ we have $\tau(\theta^{k}(a)_j) = \tau(\theta^k(b)_j)$.
  We call the pair $(\theta, \tau)$ {\em  minimal} if every pair of letters is distinguishable by $(\theta,\tau)$.
\end{definition}

 Note that the letters $a, b$ are indistinguishable if and only if  for any $(c,d)$ that is reachable from $(a,b)$ (which includes $(a,b)$ by $0$ steps) we have $\tau(c) = \tau(d)$.

In the next lemma, a fundamental result from automata theory enables us to replace an $r$-automatic pair by a minimal $r$-automatic pair.

\begin{lem}\label{minimal}
  Let  $(Y,\sigma)$ be an infinite $r$-automatic shift generated by $(\theta',\tau')$ where $\theta'$ is primitive and pair-aperiodic.  
  Then there exists a minimal $r$-automatic pair $(\theta,\tau)$  which generates  $(Y,\sigma)$, where $\theta$ is  primitive and pair-aperiodic.
  \end{lem}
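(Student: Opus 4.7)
The plan is to carry out a Myhill--Nerode style minimization of the automaton $(\theta',\tau')$ and check that it preserves the properties of interest, namely the generated shift, primitivity, and pair-aperiodicity. First I would introduce the natural equivalence relation on $\mathcal A$: declare $a\sim b$ iff $a$ and $b$ are indistinguishable by $(\theta',\tau')$, that is, $\tau'((\theta')^k(a)_j)=\tau'((\theta')^k(b)_j)$ for every $k\ge 0$ and $0\le j<r^k$. This is manifestly an equivalence relation; let $\pi:\mathcal A\to \mathcal A/{\sim}$ be the quotient map.

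Next I would check that $\theta'$ and $\tau'$ both descend through $\pi$. If $a\sim b$ and $0\le i<r$, the identity $(\theta')^k(\theta'(a)_i)_\ell = (\theta')^{k+1}(a)_{ir^k+\ell}$ (and the analogue for $b$) translates any distinguishing position for the pair $(\theta'(a)_i,\theta'(b)_i)$ into one for $(a,b)$, and so forces $\theta'(a)_i\sim\theta'(b)_i$. Hence there is a well-defined length-$r$ substitution $\theta$ on $\mathcal A/{\sim}$ with $\pi\circ\theta'=\theta\circ\pi$ applied letter by letter; and taking $k=0$ in the definition of $\sim$ shows $\tau'$ factors through $\pi$ as $\tau'=\tau\circ\pi$ for a well-defined $\tau:\mathcal A/{\sim}\to\mathcal B$. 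The coordinatewise extension of $\pi$ is a factor map $X_{\theta'}\to (\mathcal A/{\sim})^{\mathbb Z}$ whose image is closed, $\sigma$-invariant, and contains a $\theta$-periodic point; since primitivity of $\theta'$ transfers immediately to $\theta$ via $\pi$ (a letter $\pi(b)$ appears in $\theta^N(\pi(a))$ whenever $b$ appears in $(\theta')^N(a)$), this image equals $X_\theta$, so $\tau(X_\theta)=\tau'(X_{\theta'})=Y$. Minimality of $(\theta,\tau)$ is then automatic: if $\pi(a)\ne\pi(b)$, some $(k,j)$ witnesses $a\not\sim b$, and the same $(k,j)$ distinguishes $\pi(a)$ from $\pi(b)$ under $(\theta,\tau)$.

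The main obstacle I anticipate is pair-aperiodicity of $\theta$. Suppose $([a],[b])$ with $[a]\ne[b]$ is a periodic pair for $\theta$ with period $p$ and index $j$, so $(\theta')^p(a)_j\sim a$ and $(\theta')^p(b)_j\sim b$. Setting $a_{i+1}=(\theta')^p(a_i)_j$ and $b_{i+1}=(\theta')^p(b_i)_j$ with $(a_0,b_0)=(a,b)$ produces sequences in $\mathcal A$ with $a_i\sim a$ and $b_i\sim b$ for all $i$; as $\mathcal A^2$ is finite, the pair sequence $(a_i,b_i)$ is eventually periodic, so for some $n_0\ge 0$ and $m\ge 1$, the representatives $a':=a_{n_0}\sim a$ and $b':=b_{n_0}\sim b$ satisfy $(\theta')^{pm}(a')_{j^*}=a'$ and $(\theta')^{pm}(b')_{j^*}=b'$, where $j^*=j(1+r^p+\cdots +r^{(m-1)p})$. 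Thus $(a',b')$ is a genuine periodic pair for $\theta'$. Pair-aperiodicity of $\theta'$ then forces either $a'=b'$, which collapses $[a]=[b]$ against our hypothesis, or $p(a',b')=1$, which via $\pi$ yields $p([a],[b])=1$. Hence $p(\theta)=1$, completing the proof.
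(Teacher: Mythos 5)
Your proposal is correct and follows essentially the same route as the paper: quotient the alphabet by the indistinguishability relation, check that $\theta'$ and $\tau'$ descend and that primitivity transfers directly, and handle pair-aperiodicity by using finiteness to extract genuine representatives $(a',b')$ forming a periodic pair for $\theta'$, whose period-one witness then descends to the quotient. The only differences are cosmetic (you track the pair $(a_i,b_i)$ jointly where the paper finds fixed points of $(\theta')^k_j$ on each class separately and passes to a common power), so there is nothing further to add.
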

\begin{proof}
	We define an equivalence relation $a \sim b$ if $a,b$ are indistinguishable.
	This equivalence relation is obviously compatible with $\theta'$, so we can define
	\begin{align*}
		\theta([a])_j := [\theta'(a)_j]
	\end{align*}
	and it is also compatible with $\tau$, so we set
	\begin{align*}
		\tau([a]) := \tau(a).
	\end{align*}
	We need to check that $\theta$ is primitive and pair-aperiodic.
	As $\theta'$ is primitive, we know that there exists some $n$ such that for all $a,b \in \mathcal{A}'$ we have that $(\theta')^n(a)$ contains $b$ as a subword. Let us take $[c],[d] \in \mathcal{A}$. We know that $(\theta')^n(c)$ contains $d$ as a subword and therefore $\theta^n([c])$ contains $[d]$ as a subword. In other words $\theta$ is primitive.\\
	Let us now assume that $\theta^k([a])_j = [a]$ and $\theta^k([b])_j = [b]$, i.e. $([a],[b])$ is a periodic pair.
	This implies that $(\theta')^{k}_j$ is a map from $[a]$ to itself and also from $[b]$ to itself.
	Thus a power must have a fixed-point:
	\begin{align*}
		((\theta')^k_j)^n(a_0) &= a_0\\
		((\theta')^k_j)^m(b_0) &= b_0
	\end{align*}
	for some $a_0 \in [a], b_0 \in [b]$.
	Therefore we have
	\begin{align*}
		((\theta')^k_j)^{nm}(a_0) &= a_0\\
		((\theta')^k_j)^{nm}(b_0) &= b_0
	\end{align*}
	which means that $(a_0, b_0)$ is a periodic pair for $\theta'$. As $\theta'$ is pair-aperiodic we know that there exists $j_0$ such that
	\begin{align*}
		\theta'(a_0)_{j_0} &= a_0\\
		\theta'(b_0)_{j_0} &= b_0
	\end{align*}
	and, therefore,
	\begin{align*}
		\theta([a])_{j_0} &= [a]\\
		\theta([b])_{j_0} &= [b].
	\end{align*}
	This shows that $\theta$ is pair-aperiodic.
	
	Furthermore, by definition
	\begin{align*}
		\tau'((\theta')^{k}(a)) = \tau(\theta^k([a]))
	\end{align*}
	
	which shows that the language of $(Y \sigma)$ is the same as the language of $(\tau(X_{\theta}), \sigma)$.  Since shifts are uniquely determined by their language, the result follows. 	
	\end{proof}

We can now prove the main result of this section.

\begin{thm}\label{th:main}
  Let  $(Y,\sigma)$ be an infinite $r$-automatic shift, generated by $(\theta',\tau')$ where $\theta'$ is primitive. Then there exists an $r$-automatic pair $(\theta, \tau)$ that generates $(Y,\sigma)$ and such that $\tau:   X_\theta \rightarrow Y$ is a bijection.
\end{thm}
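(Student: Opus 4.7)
The plan is to first reduce to a convenient generating pair, and then show directly that its letter-to-letter map is injective.

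First, by Lemma \ref{pair-aperiodic}, replacing $\theta'$ with $(\theta')^{p(\theta')}$ does not change the orbit-closure shift, so we may assume from the outset that $\theta'$ is primitive and pair-aperiodic. Lemma \ref{minimal} then produces a minimal $r$-automatic pair $(\theta,\tau)$ generating $(Y,\sigma)$ with $\theta$ primitive and pair-aperiodic. Surjectivity of $\tau\colon X_\theta\to Y$ is immediate; it remains to prove injectivity.

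Suppose, for a contradiction, that $\tau(x)=\tau(y)$ for some $x\neq y$ in $X_\theta$. By Theorem \ref{skeleton}, $\pi_X(x)=\pi_X(y)$. Using the recognizability of $\theta$, for each $n\geq 1$ we may write $x=\sigma^{k_n}(\theta^n(x^{(n)}))$ and $y=\sigma^{k_n}(\theta^n(y^{(n)}))$ with the \emph{same} $k_n\in\{0,\ldots,r^n-1\}$ and with $x^{(n)}\neq y^{(n)}$ in $X_\theta$. Since $\tau$ commutes with $\sigma$, $\tau(\theta^n(x^{(n)}))=\tau(\theta^n(y^{(n)}))$, equivalently
\[
\tau\bigl(\theta^n((x^{(n)})_p)_j\bigr)=\tau\bigl(\theta^n((y^{(n)})_p)_j\bigr)\quad\text{for all }p\in\Z,\ 0\leq j<r^n.
\]

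The crucial step is that for every $n\geq|\mathcal A|^2$ we can choose a position $p_n$ at which $(x^{(n)})_{p_n}\neq (y^{(n)})_{p_n}$ \emph{and} the pair $(A_n,B_n):=((x^{(n)})_{p_n},(y^{(n)})_{p_n})$ is asymptotic disjoint. Indeed, otherwise every differing position would give a pair that is not asymptotic disjoint; Lemma \ref{le:not_disjoint} (applicable because $\theta$ is pair-aperiodic) would then give $\theta^{|\mathcal A|^2}((x^{(n)})_p)=\theta^{|\mathcal A|^2}((y^{(n)})_p)$ at each such $p$, hence $\theta^{|\mathcal A|^2}(x^{(n)})=\theta^{|\mathcal A|^2}(y^{(n)})$, and then $\sigma^{k_n}\circ\theta^{n-|\mathcal A|^2}$ would yield $x=y$. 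Applying the displayed relation at $p=p_n$ gives $\tau(\theta^n(A_n)_j)=\tau(\theta^n(B_n)_j)$ for $0\leq j<r^n$. By pigeonhole on the finite set $\mathcal A^2$, there is a single asymptotic disjoint pair $(A,B)$, $A\neq B$, with $(A_n,B_n)=(A,B)$ for $n$ in an infinite set $N\subseteq\N$.

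Finally, Lemma \ref{le:disjoint} supplies a periodic pair $(A',B')$ with the property that, for every $k\geq 2|\mathcal A|^2$, every pair reachable from $(A',B')$ is $k$-reachable from $(A,B)$. Fixing some $n\in N$ with $n\geq 2|\mathcal A|^2$, each pair $(C,D)$ reachable from $(A',B')$ appears as $(\theta^n(A)_j,\theta^n(B)_j)$ for some $j$, so $\tau(C)=\tau(D)$. Applied to the pairs $(\theta^k(A')_j,\theta^k(B')_j)$, which are $k$-reachable from $(A',B')$ for every $k\geq 0$ and $0\leq j<r^k$, this yields $\tau(\theta^k(A')_j)=\tau(\theta^k(B')_j)$ for all such $k,j$. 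Hence $A'$ and $B'$ are indistinguishable, contradicting the minimality of $(\theta,\tau)$. The delicate point is the existence of an asymptotic disjoint differing position at every sufficiently large level; once this is secured, the remainder is an interplay of Lemmas \ref{le:disjoint}, \ref{le:not_disjoint} with minimality.
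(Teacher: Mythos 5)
Your proposal is correct and follows essentially the same route as the paper: reduce to a minimal, pair-aperiodic generating pair via Lemmas \ref{pair-aperiodic} and \ref{minimal}, use recognizability together with Theorem \ref{skeleton} to desubstitute $x$ and $y$ with the same shift offset, and then run the dichotomy between asymptotic disjoint pairs (ruled out by Lemma \ref{le:disjoint} plus minimality) and non-disjoint pairs (collapsed by Lemma \ref{le:not_disjoint}). The only difference is cosmetic: the paper fixes the single level $n=2\abs{\mathcal A}^2$ at the outset, which makes your pigeonhole over infinitely many levels unnecessary.
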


\begin{proof}

By Lemmas \ref{pair-aperiodic} and  \ref{minimal}, we can assume that $(Y,\sigma)$ is generated by the minimal   $r$-automatic pair $(\theta, \tau)$ where $ \theta$ is pair-aperiodic.

First we suppose that there exist $x \neq \tilde x \in X_{\theta}$ such that $\tau(x) = \tau(\tilde x)$.
As $X_{\theta}$ is recognizable \cite{Mosse} we have $x', \tilde x'$ such that 
$x = \sigma^{\ell} \theta^{2|\mathcal{A}|^{2}}(x')$ and  $\tilde x = \sigma^{\tilde \ell} \theta^{2|\mathcal{A}|^{2}}(\tilde x')$.   Now Theorem \ref{skeleton} tells us 
 that $\ell=\tilde \ell$.
Thus, we have $x', \tilde x'$ such that $\theta^{2|\mathcal{A}|^{2}}(x') \neq \theta^{2|\mathcal{A}|^{2}}(\tilde x')$ and $\tau(\theta^{2|\mathcal{A}|^{2}}(x')) = \tau(\theta^{2|\mathcal{A}|^{2}}(\tilde x'))$.

We consider the pair $(x'_n, \tilde x'_n) \in \mathcal{A}^2$. Suppose that for some  $n \in \Z$, 
$(x'_n, \tilde x'_n)$ is an asymptotic disjoint pair.
By Lemma~\ref{le:disjoint} we know that there exists a periodic pair $(a',b')$ such that for any $(c,d)$ that is reachable from $(a',b')$, we have that $(c,d)$ is $(2\abs{\mathcal{A}}^{2})$-reachable from $(a,b)$.
 As $\tau(\theta^{2|\mathcal{A}|^{2}}(x'_n)) = \tau(\theta^{2|\mathcal{A}|^{2}}(\tilde x'_n))$, this gives that $\tau(c) = \tau(d)$ and therefore that $a'$ and $b'$ are indistinguishable. This contradicts our assumption that  $(\theta, \tau)$ is minimal.
 
Thus $(x'_n, \tilde x'_n)$ is not an asymptotic disjoint pair for any  $n \in \Z$.
By Lemma~\ref{le:not_disjoint} we know that $\theta^{2|\mathcal{A}|^{2}}(x'_n) = \theta^{2|\mathcal{A}|^{2}}(\tilde x'_n)$ for each $n\in \Z$, i.e. $x = \tilde x$, another contradiction. The result follows.

\end{proof}

The following example tells us that starting with a pair-aperiodic substitution before we minimise is necessary in Theorem \ref{th:main}.

\begin{example}
  We consider the substitution $\theta': \{a,b,c,d\} \to \{a,b,c,d\}^{3}$ and $\tau': \{a,b,c,d\} \to \{x,y,z\}$ where
  \begin{align*}
    \theta'(a) &= abd\\
    \theta'(b) &= aad\\
    \theta'(c) &= add\\
    \theta'(d) &= acd
  \end{align*}
  and 
  \[  \tau'(a) =   \tau'(c) = x, \,\,\,\,\,   \tau'(b) = y \,\, \mbox{ and }\,\,
  \tau'(d) = z.  \]
 
  We find that $(\theta', \tau')$ is minimal, as the only possible indistinguishable pair is $(a,c)$, but $\tau'(\theta'(a)_1) = y \neq \tau'(\theta'(c)_1) = z$.
  
   However $\tau': X_{\theta'} \to Y$ is not injective. For, let
  $x = \lim_{n\to \infty} \sigma^{(3^{2n}-1)/2} \theta'^{2n}(a) $ and let $\tilde  x = \lim_{n\to \infty} \sigma^{(3^{2n}-1)/2} \theta'^{2n}(c)$. One can verify that $x_n = \tilde x_n$ for all $n \neq 0$ and $x_0 = a, \tilde x_0 = c$. Thus $\tau'(x) = \tau'(\tilde x)$.

  The ``problem" here is that $(a,c)$ is a periodic pair of period $2$ and we need to actually consider $\theta'^2$ instead of $\theta'$.
  \begin{align*}
    \theta'^2(a) &= abdaadacd\\
    \theta'^2(b) &= abdabdacd\\
    \theta'^2(c) &= abdacdacd\\
    \theta'^2(d) &= abdaddacd.
  \end{align*}
  Here we see that $(a,c)$ is indeed indistinguishable, so that $(\theta'^2, \tau)$ is not minimal. In this case we minimise and we find the new substitution
  \begin{align*}
    \theta(x) &= xyzxxzxxz\\
    \theta(y) &= xyzxyzxxz\\
    \theta(z) &= xyzxzzxxz
  \end{align*}
  along with $\tau(a)=x, \tau(b)=y, \tau(d)=z$ gives us the correct representation.
 
\end{example}

\subsection{Automorphism groups and uniform-to-one extensions.}

In this section we investigate topological factors of minimal dynamical systems $(X,T)$ where $X$ is compact and metric, and where the factor map is uniform-to-one.

Let $(X,T)$ be a such a dynamical system. If there exists an automorphism $\Phi$ of $(X, T)$ with $\Phi^k=\Id$ for some minimal $k$, then we claim that we can define an everywhere $k$-to-one factor of $(X, T)$. First note that if $k$ is minimal, then for no $\ell<k$ do we have $\Phi^\ell(x)=x$ for some $x\in X$.  For, if that were the case, then $\Phi^\ell$ equals the identity on a dense subset of $X$. By minimality, $\Phi^\ell= \Id$ and this contradicts minimality of $k$.  Now we can  identify points $\{ x, \Phi(x), \ldots , \Phi^{k-1}(x) \}$ that are in the same orbit and this is a well defined $T$-commuting map on $X$.

This assertion extends to the measurable setting. Namely,  if there exists a measurbale ergodic  bijection $\Phi$ of $(X, T, \mu)$, then we can use ergodicity to assert that there is a minimal $n$ such that for $\mu$-almost all $x$, $\Phi^n(x)=x$ and $n$ is minimal for that $x$. Now we can proceed with that $n$ as above.

In particular, if $\theta$ has column number $c$ and  $\Phi^c=\Id$ with $c$ minimal, then the factor defined above is an almost everywhere one-to-one extension of its maximal equicontinuous factor, 
and this is a convenient topological factor for a dynamical system to have,
see for example \cite{Baake-Lenz}, \cite{L-M-2018}, or \cite{Staynova} for uses. 
It is not always the case that such a factor exists  \cite{Herning}. 
It is natural to investigate how closely related automorphisms and everywhere $k$-to-$1$ factors are. The following result shows that for the case $k=2$ we have a $1$-to-$1$ correspondence. We are grateful to T. Downarowicz and M. Lemanczyk who pointed this general result out to us via private communication~\cite{Downarowicz_Lemanczyk}.

\begin{thm}\label{automorphism-almost-automorphic-v2}
Let $(X,T)$  and $(Y,S)$ be  minimal topological dynamical systems  on  compact metric spaces $X$ and $Y$. If $\pi:(X,T)\rightarrow (Y,S)$ is a  uniformly 2-to-1 topological  factor map, then there exists $\Phi\in \Aut(X,T)$ such that $ \Phi^{2}=\Id$.
\end{thm}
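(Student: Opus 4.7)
The natural candidate is the fibrewise swap $\Phi(x) :=$ the unique element of $\pi^{-1}(\pi(x)) \setminus \{x\}$; uniform $2$-to-$1$-ness makes this well defined and fixed-point-free with $\Phi^2 = \Id$, while $\pi \circ \Phi = \pi$ combined with $T$ permuting $\pi$-fibres forces $T \circ \Phi = \Phi \circ T$ at the set-theoretic level. The entire task is then to show that $\Phi$ is continuous; by compactness of $X$, this is equivalent to closedness of its graph $R' := \{(x,\Phi(x)) : x \in X\}$ in $X \times X$.

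Let $R := \{(x_1, x_2) \in X \times X : \pi(x_1) = \pi(x_2)\}$ and $\Delta := \{(x,x) : x \in X\}$. Then $R$ is closed and $T \times T$-invariant, $\Delta \subset R$ is closed, $R' = R \setminus \Delta$, and $(\Delta, T \times T)$ is canonically isomorphic to $(X, T)$, hence minimal. My strategy is to exhibit a minimal subsystem $M$ of $(R, T \times T)$ distinct from $\Delta$. Since distinct minimal sets are disjoint, any such $M$ lies in $R'$; the first-coordinate projection $p_1|_M : M \to X$ is then continuous and $T$-equivariant, surjective because $p_1(M)$ is a closed $T$-invariant subset of the minimal space $X$, and injective because $R' \cap p_1^{-1}(x) = \{(x, \Phi(x))\}$ is a singleton. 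A continuous bijection between compact Hausdorff spaces is a homeomorphism, so $M = R'$, whence $R'$ is closed and $\Phi = p_2 \circ (p_1|_M)^{-1}$ is continuous.

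To produce such an $M$, I would start from any $(x_0, \Phi(x_0)) \in R'$ and pass to its $T \times T$-orbit closure, which contains at least one minimal subsystem. The only obstruction to picking one distinct from $\Delta$ is that this orbit closure might accumulate on $\Delta$; since $\Phi$ commutes with $T$ set-theoretically, such accumulation is equivalent to the pair $(x_0, \Phi(x_0))$ being proximal in $(X, T)$. The key dynamical fact I need to establish is therefore that a uniformly two-to-one factor map between minimal compact metric systems is automatically a \emph{distal} extension, ruling out proximal fibre pairs. I expect this distality step to be the technical core of the proof; a natural approach uses the Ellis semigroup $E(X, T)$, observing that a proximal fibre pair would yield an idempotent $u$ in a minimal left ideal of $E(X, T)$ with $u(x_0) = u(\Phi(x_0))$, and then tracking how the induced idempotent $u_Y \in E(Y, S)$ (determined by $\pi \circ u = u_Y \circ \pi$) interacts with the uniformly two-point fibres of $\pi$, using that $u$ fixes its image $u(X) \subset X$ pointwise, to obtain the required contradiction.
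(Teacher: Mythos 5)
Your set-theoretic construction of $\Phi$ and the reduction of continuity to closedness of the graph $R'=R\setminus\Delta$ are sound, and the observation that a single non-proximal fibre pair would finish the proof (its orbit closure contains a minimal set $M$ disjoint from $\Delta$, hence $M\subseteq R'$; then $p_1|_{M}$ is a continuous $T$-equivariant bijection onto $X$, forcing $M=R'$ to be closed) is correct. But this concentrates the entire content of the theorem in the step you defer, namely ruling out proximal fibre pairs, and your Ellis-semigroup sketch does not deliver the contradiction. From a proximal fibre pair $\{x_0,x_0'\}$ over $y_0$ you do get a minimal idempotent $u$ with $ux_0=ux_0'=x_0$ and $u_Y y_0=y_0$; composing with elements of the same minimal left ideal one then finds that \emph{every} fibre can be collapsed (i.e.\ $\pi$ is a proximal extension), and that for each minimal idempotent $v$ and each $v_Y$-fixed $y$ exactly one of the two points of $\pi^{-1}(y)$ lies in $vX$ while the other is sent to it. None of this contradicts uniform two-to-one-ness: these are precisely the properties a hypothetical two-to-one proximal extension would have, and different idempotents simply select different points of the same fibre. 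The contradiction must use the metric and minimality hypotheses in an essentially topological way, and your sketch never does; ``tracking how $u_Y$ interacts with the fibres'' is where the proof actually has to happen, and it is absent.

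The paper supplies exactly this missing ingredient, and elementarily: set $D(y)=d(x,\tilde x)$ where $\pi^{-1}(y)=\{x,\tilde x\}$, so $D>0$ everywhere by uniform two-to-one-ness, and check that $D$ is upper semicontinuous. A proximal fibre pair over $y_0$ gives a sequence $(n_i)$ with $D(S^{n_i}y_0)=d(T^{n_i}x_0,T^{n_i}x_0')\to 0$; continuity of each $T^j$ propagates this to $D(S^{j}S^{n_i}y_0)\to 0$, and minimality of $(Y,S)$ then makes $\{y:D(y)<\varepsilon\}$ dense for every $\varepsilon>0$. Since this set is also open by upper semicontinuity, Baire's theorem forces $D=0$ on a dense $G_\delta$, a contradiction. (The paper phrases this as ``$D$ is bounded away from $0$,'' after which continuity of $\Phi$ follows from a compactness argument equivalent to your closed-graph step.) You should either carry out your distality claim by an argument of this kind or cite a precise reference for it; as written, the core of the proof is missing.
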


\begin{proof}
Several times in this proof, compactness allows us to  drop to a subsequence; we will do this without re-indexing.

For $y\in Y$, write $\{ x,\tilde x \}= \pi^{-1}(y)$; define $\Phi(x)=\tilde x$ and $\Phi(\tilde x)= x$. It is clear that $\Phi$ is bijective, and the fact that $\pi\circ T=S\circ \pi$ implies that $\Phi\circ T = T\circ \Phi$. It remains to show that $\Phi$ is continuous.

Define $D:Y\rightarrow [0,\infty)$ by $D(y) = d(x,\tilde x)$. If $y_n\rightarrow y$ then  it can be verified that $D(y)\geq \limsup D(y_n)$, i.e. $D$ is upper semi-continuous. Suppose that $D$ is bounded away from 0. Then $\Phi$ is continuous. Let $x_n \to x$ and $\{x_n, \tilde{x}_n\} = \pi^{-1}(y_n)$. By compactness $\tilde{x}_n \to \tilde{x}$ along one subsequence and let us assume that $\tilde{x}_n \to x'$ along another subsequence, where $\tilde{x} \neq x'$. Since $D$ is bounded away from 0, we also have $x'\neq x$. But since $\pi$ is continuous, we have $\pi(x)=\pi(\tilde x)= \pi(x')= y$, a contradiction.

It remains to show that $D$ is bounded away from 0. Suppose that $D(y_n)\rightarrow 0$. The continuity of $T$ implies that for each $j$, $D(S^j y_n))\rightarrow 0$ as $n\rightarrow \infty$.  If $y_n\rightarrow y$, the minimality of $S$ implies that  in any open subset of $Y$ we can find points whose $D$ value is arbitrarily small. By upper semi-continuity, $\{ y: D(y)<r  \}$   is open and by the above it is also dense. Hence $D=0$ on a dense $G_\delta$  set, which is a contradiction.

\end{proof}

Unfortunately Theorem \ref{automorphism-almost-automorphic-v2} is no longer true for a uniform $k$-to-$1$ factor map, where $k > 2$.

\begin{example}\label{no-automorphism-example}
Let $\theta(a)=abb$, $\theta(b)=bac$, $\theta(c)=cca$, and let $\eta(x)=yxz$, $\eta(y)=yxx$, $\eta(z)=yxy$. 
Then $\eta$ has column number $1$, and $(X_\theta,\sigma)$ is an almost everywhere three-to-one extension of $(\Z_3,+1)$.
One verifies that the left radius one, right radius zero rule  in Table \ref{local-rule-table-1} 
defines an exactly three-to-one shift commuting map $\pi:X_\theta\rightarrow X_\eta$. However $(X_\theta,\sigma)$ has a trivial automorphism group.

\end{example}

\begin{table}[]\begin{tabular}{|l| |l|l|l|}
\hline
 & a  &  b & c  \\ \hhline{|=#=|=|=|}
 a&z  &y  &x  \\ \hline
 b&y  &x  &z  \\ \hline
 c&x  &z  &y  \\ \hline

\end{tabular}
\vspace{0.5cm}
 \caption{The local rule for the factor map in Example \ref{no-automorphism-example}.}
\label{local-rule-table-1}

\end{table}

\section{Strongly injective substitutions and language automorphisms}\label{strongly-injective-section}

\begin{definition}
Let $\mathcal L\subset\mathcal A^*$ be a language. We say that the bijection $\Phi:\mathcal L\rightarrow \mathcal L$ is an {\em $\mathcal L$-automorphism} if
$\Phi(w_1 w_2) = \Phi(w_1)\Phi(w_2)$ whenever $w_1$, $ w_2$ and $w_1w_2$ belong to  $\mathcal L $.
\end{definition}

We have the following proposition which links language automorphisms to automorphisms of the corresponding dynamical system. Recall that any shift space $X$ 
is defined by its language, which is closed under the taking of subwords, and where every word is left and right extendable to a word in $\mathcal L$. 
 Conversely, any language $\mathcal L$ satisfying these two properties 
defines a shift $X_\mathcal L$.

\begin{prop}
Let $\mathcal L$ be the language of a shift $(X,\sigma)$.  Then    $\Phi$ is an {\em $\mathcal L$-automorphism}  if and only if $\Phi\in \Aut(X,\sigma)$ has zero radius.\end{prop}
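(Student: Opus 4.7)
The plan is to prove both implications by showing that the concatenation identity together with bijectivity forces an $\mathcal L$-automorphism to be induced by a permutation of the alphabet, at which point the equivalence with zero-radius automorphisms of $(X,\sigma)$ follows directly from the Curtis--Hedlund--Lyndon theorem. Throughout I would tacitly restrict to the (finite) set of letters actually appearing in $\mathcal L$, which is harmless for both sides of the equivalence.

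For the direction $(\Leftarrow)$, suppose $\Phi\in\Aut(X,\sigma)$ has zero radius. Curtis--Hedlund--Lyndon produces a local rule $\hat f:\mathcal A\to\mathcal A$ with $(\Phi(x))_n=\hat f(x_n)$, and surjectivity of $\Phi$ on $X$ forces $\hat f$ to be a surjection, hence a bijection, on the letters of $\mathcal L$. The letter-by-letter extension $\Phi(w_1\ldots w_n):=\hat f(w_1)\ldots\hat f(w_n)$ then sends $\mathcal L$ into $\mathcal L$, because any $w\in\mathcal L$ embeds in some $x\in X$ so that $\Phi(w)$ embeds in $\Phi(x)\in X$; it is bijective on $\mathcal L$ because $\hat f$ is, and it satisfies the concatenation identity trivially.

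For the direction $(\Rightarrow)$, suppose $\Phi:\mathcal L\to\mathcal L$ is an $\mathcal L$-automorphism. Since shift languages are closed under taking subwords, the concatenation identity extends inductively to $\Phi(w_1\ldots w_n)=\Phi(w_1)\ldots\Phi(w_n)$, so $\Phi$ is entirely determined by the values $\hat f(a):=\Phi(a)$ on single letters. The crux of the argument, and the step I expect to be the main obstacle, is to show that $|\hat f(a)|=1$ for every letter $a$. For this I would use a length-counting argument: given any letter $b$ of $\mathcal L$, surjectivity of $\Phi$ provides some $w\in\mathcal L$ with $\Phi(w)=b$; but $|\Phi(w)|=\sum_i|\hat f(w_i)|\geq|w|$, so $|\Phi(w)|=1$ forces $|w|=1$ and $|\hat f(w_1)|=1$. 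Hence every letter of $\mathcal L$ lies in the image under $\hat f$ of the subset $\mathcal A_1:=\{a:|\hat f(a)|=1\}$, and by finiteness $\mathcal A_1$ exhausts $\mathcal A$ and $\hat f$ is a bijection $\mathcal A\to\mathcal A$. The rule $(\Phi(x))_n:=\hat f(x_n)$ then defines a continuous, shift-commuting bijection of $X$ of zero radius whose restriction to $\mathcal L$ is the original $\mathcal L$-automorphism, completing the equivalence.
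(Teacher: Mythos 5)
Your proof is correct and follows essentially the same route as the paper's: reduce an $\mathcal L$-automorphism to a bijection of the alphabet, extend letter-by-letter, and invoke Curtis--Hedlund--Lyndon for the converse. Your length-counting argument is a useful fleshing-out of the step the paper disposes of in one line (``since $\Phi$ is bijective, it must map letters to letters''); the only point you assert rather than argue is surjectivity of the induced map on $X$, which the paper gets from minimality and which also follows since $\hat f^{-1}$ induces the inverse sliding block code.
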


\begin{proof}

Suppose that $\Phi$  is an $\mathcal L$-automorphism. Since $\Phi$ is bijective, it must map letters to letters, i.e. $\Phi|_\mathcal A$ is a bijection $\phi:\mathcal A\rightarrow \mathcal A$. Since it is a language morphism, $\phi$ defines $\Phi$. We can extend $\Phi$, via concatenation, to a map on bi-infinite sequences $u$ in $X$, namely $\Phi((u_n)_{n\in \Z} = (\phi(u_n))_{n\in \Z}$.   Let $u\in X$ and let $v:=\Phi(u)$. Since $\Phi$ maps $\mathcal L$ to $\mathcal L$, $v\in X$ and so the image of  $\Phi(X)$ is contained in $X$. Since $\Phi$ is defined by a local rule it is  shift-commuting, and  it is injective on $X$ since it is injective on $\mathcal L$. By minimality, $\Phi$ is surjective.

The 
converse is straightforward.\end{proof}

\begin{remark}
In the substitutional case, we note that if $\Phi$ has radius zero then  $\kappa(\Phi)=0$.  For if $\Phi$ has radius zero, then for some $n$, $\Phi^n=\Id$. But then $n\kappa(\Phi)=0$, and  since $\Z_r$ is torsion free, our assertion follows.

\end{remark}
In an ideal world any automorphism $\Phi$ with $\kappa(\Phi)=0$ would have zero radius. Unfortunately this is not the case.

\begin{example} 
Let $\theta(a)=ab$, $\theta(b)=ca$ and $\theta(c)=ba$ and 
define the local rule $\phi$ with left radius one, as  $\phi(b)=a$, $\phi(c)=a$, $\phi(aa)=c$, $\phi(ba)=b$ and $\phi(ca)=b$.  Then $\phi$ is the local rule of an involution in $ \Aut(X_\theta,\sigma)$.
\end{example}

\begin{definition}\label{strongly}We say that $\theta$ is {\em strongly right- (left-) injective} if  $\theta$ is injective and does not have any  right- (left-) infinite fixed points which differ only in their 0-th entry, and we say that  $\theta$ is {\em strongly injective} if it is both strongly right- and  left-injective.
\end{definition}

\begin{definition}\label{minimal-set} Let  $\theta:\mathcal A\rightarrow \mathcal A^{r}$ be a length $r$ substitution with column number $c$. A {\em minimal set} (for $\theta$) is  a set of letters 
$\{\alpha_1 , \ldots , \alpha_c \}\subset \mathcal A$ of cardinality $c$ such that this set appears as $(\theta^n(\mathcal A))_i$ for some $n\in \N$ and some $0\leq i < r^n$. Let $\mathcal{X}$ denote the 
collection of all minimal sets:
\begin{align*}
	\mathcal{X} := \{M \in \mathcal{A}^{c(\theta)}: \exists k, j<r^k \text{ s.t. } \theta^k(\mathcal{A})_j = M\}.
\end{align*}
\end{definition}

\begin{thm}\label{strongly-injective-rep}
Let  $\theta$ be a  primitive length $r$ substitution 
such that $X_\theta$ is infinite. Then $(X_\theta, \sigma)$ is topologically conjugate to $(X_\eta, \sigma)$ where $\eta$ is  a strongly injective   length $r$ substitution.
\end{thm}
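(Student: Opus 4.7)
The plan is to start from Theorem \ref{thm:injective} to reduce to the case that $\theta$ is injective, and then systematically eliminate the obstructions to strong injectivity by passing to shifted sliding-block representations and further injective reductions.

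First I would identify the obstruction. If $\theta$ is injective but not strongly right-injective, then there exist distinct $a,b \in \mathcal A$ and some $n \ge 1$ with $\theta^n(a)_0 = a$, $\theta^n(b)_0 = b$, and $\theta^n(a)_j = \theta^n(b)_j$ for $1 \le j \le r^n-1$; the left-injective notion is symmetric, with the shared endpoint being $r^n-1$ rather than $0$. Replacing $\theta$ by a suitable power (which does not change $X_\theta$), we may assume $n=1$, so $\theta(a) = a w_1 \cdots w_{r-1}$ and $\theta(b) = b w_1 \cdots w_{r-1}$ share a common tail.

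The key step is to pass to the shifted sliding-block representation $\widetilde\theta := \theta^{(2,\,r-1)}$ on $\mathcal A^{(2)}$, which is topologically conjugate to $\theta$. A direct computation from the definition of $\theta^{(\ell,k)}$ shows that, for every $(\alpha_1,\alpha_2) \in \mathcal A^{(2)}$, each of the $r$ blocks comprising $\widetilde\theta(\alpha_1,\alpha_2)$ depends only on $\theta(\alpha_1)_{r-1}$ and on $\theta(\alpha_2)$---the ``first column'' $\theta(\alpha_1)_0$ has been shifted out of view. For a bad right-pair this forces $\widetilde\theta(a,x) = \widetilde\theta(b,x)$ for every admissible neighbour $x$, so $\widetilde\theta$ fails to be injective. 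Applying Theorem \ref{thm:injective} to $\widetilde\theta$ then produces an injective substitution $\theta'$, still representing $X_\theta$, in which the bad-pair letters have been merged.

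Iterating this construction, the finite set of letters occurring as the $0$-coordinate of some right-infinite $\theta^m$-fixed point strictly shrinks at each step, so after finitely many iterations we reach a substitution with no bad right-pairs. A symmetric construction using $\theta^{(2,0)}$ (which suppresses the last column of the right-hand letter) eliminates bad left-pairs. The main obstacle is compatibility of the right- and left-reductions: eliminating bad left-pairs could in principle reintroduce bad right-pairs, and vice versa. I would handle this by alternating the two procedures and tracking a combined complexity measure---for example, the total number of unordered pairs of letters that simultaneously generate distinct half-infinite $\theta^m$-fixed points differing only in a single endpoint---and arguing that each alternation strictly decreases this finite quantity, forcing the procedure to terminate at the desired strongly injective $\eta$ of length $r$ with $X_\eta \cong X_\theta$.
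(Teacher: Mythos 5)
Your key step fails: passing to $\widetilde\theta := \theta^{(2,r-1)}$ and merging the colliding letters does not eliminate the bad right-pair; in general it reproduces it, and can multiply it. The right-infinite fixed points of $\theta^{(2,k)}$ are generated by the letters of $\mathcal A^{(2)}$ that are periodic under the \emph{new} zeroth-column map, which for $k=r-1$ is $(\alpha_1,\alpha_2)\mapsto(\theta_{r-1}(\alpha_1),\theta_0(\alpha_2))$; these are not the images of the old fixed letters, and nothing prevents two of them from again generating rays that differ only in coordinate $0$. Concretely, take $\theta(a)=ac$, $\theta(b)=bc$, $\theta(c)=ab$ (primitive, injective, $X_\theta$ infinite), with bad right-pair $(a,b)$. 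Then $\theta^{(2,1)}$ identifies $ac\sim bc=:A$ and $ab\sim bb=:B$, and the reduced substitution $\eta$ on $\{A,B,ca,ba,cb\}$ is $\eta(A)=(ca)B$, $\eta(ca)=(ba)A$, $\eta(B)=(cb)A$, $\eta(ba)=(ca)A$, $\eta(cb)=BA$. One computes $\eta^2(ca)=(ca)(A)(ca)(B)$, $\eta^2(ba)=(ba)(A)(ca)(B)$, $\eta^2(B)=(B)(A)(ca)(B)$, $\eta^2(cb)=(cb)(A)(ca)(B)$, so the four right-infinite fixed points of $\eta^2$ pairwise differ only in coordinate $0$: one bad pair has become six, and your proposed termination measure (letters generating right-infinite fixed points) has gone from two to four. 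The underlying obstruction is that the points of $X_\theta$ witnessing the failure of strong injectivity (distinct points differing in finitely many coordinates) are conjugacy-invariant; a recoding can only help by arranging that the substitution's fixed points avoid them, and your window, centred at the seam between two $\theta$-images, does not do this. The alternation/left-right compatibility argument is in any case only asserted, not proved, and is moot given that the basic step already fails.

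The ingredient you are missing is the column number $c$. The paper's proof is a one-shot construction: if $\theta$ has exactly $c$ right- and $c$ left-infinite fixed points, these are generated by a \emph{minimal set}, whose elements satisfy $\theta^k(a)_j\neq\theta^k(b)_j$ for all $k,j$, so the fixed points disagree at \emph{every} coordinate and strong injectivity is automatic. To reach this situation one replaces $\theta$ by a power so that three consecutive columns $k-1,k,k+1$ (with $1\le k\le r-1$) jointly attain the minimum $c=\lvert\{\theta(a)[k-1,k+1]:a\in\mathcal A\}\rvert$, and takes $\theta^{(2,k)}$ centred there: its zeroth and last column maps then have eventual images of cardinality exactly $c$, giving exactly $c$ fixed points on each side, and the BDM injectivization preserves this count. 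In your example this amounts to choosing the window where the columns have already collapsed, rather than at the boundary between $\theta(\alpha_1)$ and $\theta(\alpha_2)$.
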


\begin{proof}
Note that if a substitution $\theta$ has column number $c$, then it has at least $c$ right-infinite and $c$ left-infinite fixed points. We claim that  if $\theta$ has exactly $c$ right- (left-) infinite fixed points, then it must be right- (left-) strongly injective. For, in this case there is a minimal set $\mathcal A_r$ ($\mathcal A_l$) of $c$ letters that generates the $c$ right- (left-) infinite fixed points. The fact that $\theta$ has column number $c$ implies that these
fixed points are pairwise distinct at each index $n\geq 0$, and  our claim follows. 

Now suppose that $\theta$ is not strongly injective. By replacing $\theta$ by a power if necessary, we can assume that there are three consecutive indices $k-1,k,k+1$, with  $k\in\{1, \ldots r-1\}$, where $\theta$'s column number is achieved jointly for these indices, i.e.    $|\{\theta(a)[k-1,k+1]: a\in \mathcal A\}| = c$.
  Now let $\bar \eta:=\theta^{(2,k)}$ be the $k$-shifted 2-sliding block representation of $\theta$. By construction, $\bar \eta$ has exactly $c$ right-infinite fixed points and $c$ left-infinite fixed points. If $\bar\eta$ is injective on letters, then by our previous claim, it is strongly injective and in this case 
 let $\eta=\bar\eta$.  If $\bar\eta$ is not injective on letters, 
define an equivalence relation on   its alphabet  where $\alpha \sim \beta$ if $\bar \eta (\alpha) =\bar  \eta(\beta)$. Let $\mathcal B$ be the set of equivalence classes for $\sim$ and define $\eta([\beta]):=\bar\eta(\beta) $. Then  $(X_\eta,\sigma)$ is topologically conjugate to $(X_{\bar \eta}, \sigma)$ via the map $\Phi((x_{n})_{n\in\Z}) = ([x_n])_{n\in\Z}$. (See for example  the proof of Theorem \ref{thm:injective} in \cite{BDM}.)   By construction $\eta$ is  injective on letters. Since  $\kappa (\phi)=0$,   the fixed points of $\bar\eta$ are in one-to-one correspondence with those of  $\eta$.  The result follows by our claim.

\end{proof}

\begin{thm}\label{radius0}
Let $\theta$ be a strongly injective primitive length $r$ substitution 
such that $X_\theta$ is infinite. If $\kappa(\Phi)=0$ 
then $\Phi$ 
has radius 0 and so in particular it is a language automorphism.

\end{thm}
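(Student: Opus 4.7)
The plan is to use Proposition~\ref{prop:radius_two}, which gives that $\Phi$ has left and right radius at most $1$ and satisfies the commutation $\Phi\circ\theta^{c!}=\theta^{c!}\circ\Phi$. Write the local rule as $\Phi(x)_n=f(x_{n-1},x_n,x_{n+1})$; the target is to prove that $f(a,b,c)$ depends only on~$b$. Reading off the commutation at each position $0\le i<r^{c!}$ inside a $\theta^{c!}$-block (this is essentially Proposition~\ref{prop:last_nail_lemma}~(4)), for interior $i$ with $1\le i\le r^{c!}-2$ the right-hand side uses only the central letter, so $\theta^{c!}(f(a,b,c))_i$ depends only on $b$; at the left boundary $i=0$ it depends on $(a,b)$, and at the right boundary $i=r^{c!}-1$ on $(b,c)$. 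Iterating the commutation, the same statement holds with $\theta^{c!}$ replaced by $\theta^{kc!}$ for every $k\ge 1$.

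Next I rule out the dependence of $f(a,b,c)$ on $c$. Fix $a,b$ and set $\alpha=f(a,b,c)$, $\alpha'=f(a,b,c')$. Then $\theta^{kc!}(\alpha)$ and $\theta^{kc!}(\alpha')$ agree on positions $0,\ldots,r^{kc!}-2$ and may differ only at the rightmost position. After replacing $c!$ by a suitable multiple so that $(\theta_{r-1})^{c!}$ is idempotent on its image, the rightmost letters of $\theta^{kc!}(\alpha),\theta^{kc!}(\alpha')$ stabilise to $\theta_{r-1}$-periodic letters $\delta,\delta'$, and reading off the words from the right as $k\to\infty$ yields two left-infinite $\theta^{c!}$-fixed sequences $v,v'$ ending with $\delta,\delta'$ and agreeing at every strictly earlier position. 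I then claim $\delta=\delta'$: if both $\delta$ and $\delta'$ are genuinely $\theta_{r-1}$-fixed, strong left-injectivity of $\theta$ applies to $v,v'$ directly; otherwise I use primitivity to translate along the $\theta$-orbits of $v,v'$ and reduce to the $\theta$-fixed case. Once $\delta=\delta'$, the words $\theta^{kc!}(\alpha),\theta^{kc!}(\alpha')$ agree everywhere and injectivity of $\theta^{kc!}$ forces $\alpha=\alpha'$.

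A symmetric argument, using strong right-injectivity of $\theta$ at the left boundary, shows that $f(a,b,c)$ is also independent of $a$. Combining these gives $f(a,b,c)=\phi(b)$ for some $\phi\colon\mathcal A\to\mathcal A$, so $\Phi$ is a language automorphism of radius~$0$. The subtle point is the transfer in the second paragraph: strong injectivity of $\theta$ need not automatically be inherited by $\theta^{c!}$, so the reduction from the left-infinite $\theta^{c!}$-fixed sequences $v,v'$ to a situation where strong left-injectivity of $\theta$ applies directly is the step where I expect the argument to require the most care, relying essentially on primitivity and the orbit structure under $\theta_0$ and $\theta_{r-1}$.
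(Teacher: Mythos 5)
Your proof is correct in substance and arrives at the same contradiction as the paper's, but by a different route: the paper works globally with the action of $\Phi$ on a bi-infinite $\theta$-fixed point $u$ (if the radius were positive, some letter occurring at two positions of $u$ would have two distinct images in $\Phi(u)$, and iterating $\theta$ on this pair produces two one-sided fixed points differing in a single entry), whereas you work locally with the commutation relations of Propositions \ref{prop:radius_two} and \ref{prop:last_nail_lemma}, showing that all interior positions of the block $\theta^{kc!}(f(a,b,c))$ are determined by $b$ alone and then pinning down the two boundary letters by the same limiting fixed-point argument. Your version localises the disagreement more cleanly (it sits at exactly one boundary coordinate of each block, so there is no need for the paper's intermediate step of using injectivity of $\theta$ to shrink the disagreement window to a single coordinate), at the mild cost of invoking the radius-$\le 1$ bound and the standing height-one assumption, which the paper's argument for this particular theorem does not use.

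Two points deserve attention. First, the ``subtle point'' you flag is not actually an obstacle, but your proposed escape hatch would not work as described. In this paper a one-sided ``fixed point'' means a point fixed by \emph{some power} of $\theta$ (this is how such points are introduced in the preliminaries and how Definition \ref{strongly} is used in the proof of Theorem \ref{strongly-injective-rep}), so your left-infinite $\theta^{N}$-fixed sequences $v,v'$ fall directly under strong left-injectivity and $\delta=\delta'$ follows with no reduction needed. By contrast, ``translating along the $\theta$-orbit'' to reach genuinely $\theta$-fixed points would fail: applying $\theta$ to $v,v'$ spreads the single-entry disagreement across all positions where $\theta(\delta)$ and $\theta(\delta')$ differ, which need not be only the last one. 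Second, the final ``combining'' step has a small gap: independence in $c$ for fixed $(a,b)$ together with independence in $a$ for fixed $(b,c)$ only yields $f(a,b,c)=f(a',b,c')$ along chains of admissible triples sharing a context, and the relevant bipartite graph over a fixed middle letter $b$ need not be connected. The fix is immediate within your own framework: for an arbitrary pair of admissible triples $(a,b,c)$ and $(a',b,c')$ with the same middle letter, the blocks $\theta^{kc!}(f(a,b,c))$ and $\theta^{kc!}(f(a',b,c'))$ already agree at every interior position, and the two boundary positions are handled simultaneously by the left- and right-infinite fixed-point arguments, after which injectivity of $\theta^{kc!}$ on letters gives $f(a,b,c)=f(a',b,c')$.
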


\begin{proof}
We suppose that $\Phi$ is nontrivial; then it must permute fixed points, and it cannot fix either the right ray or the left ray of a fixed point. We assume that
 $\Phi$ sends the fixed point $u$, whose right-infinite ray is $u_{[0,\infty)}=\lim_k\theta^{k}(a)$,  to the  fixed point  $v\neq u$, whose right-infinite ray is  $v_{[0,\infty]}=\lim_k\theta^{k}(b)$.

 If $\Phi$ does not have radius 0, then there exist letters $c,d$ and $e$, with $d\neq e$, and indices $i\neq j$ such that 
$u_i=c=u_j$, $v_i=d$ and $v_j=e$. This implies that for all $n$, $\theta^n(c)$ appears starting at the indices  $r^ni$ and $r^nj$ in $u$,  $\theta^n(d)$ appears starting at index $r^ni$ in $v$, and $\theta^n(e)$ appears starting at index $r^nj$ in $v$.  Suppose that   the left radius of $\Phi$ is $\ell>0$. Then for all large $n$, $\theta^n(d)_{[\ell,r^n-1]}= \theta^n(e)_{[\ell,r^n -1]}$ 
and thus we have two  fixed points $u'=\ldots \cdot d'D'x$ and $v'=\ldots \cdot e'E'x$ which disagree  on their initial entries: $d'\neq e'$, where the length of $D'$ equals that of $E'$, and where the suffix of $D'$ does not equal that of $E'$, and finally where $u'$ and $v'$  agree on their right rays $x$ starting at index  at most $\ell$.
If $D'\neq \emptyset$, so that also $E'\neq \emptyset$, then the image of their suffixes under $\theta$ must agree, and this contradicts our assumption that $\theta$ is injective. (Here we are using the fact that $r$ must be at least two, of course.) Thus $D'=\emptyset=E'$
and thus we have two  fixed points $u'=\ldots \cdot d'x$ and $v'=\ldots \cdot e'x$ which disagree  on their initial entry: $d'\neq e'$, and which agree on their right rays starting at index 1. This contradicts our assumption that $\theta$ is strongly right-injective. If    $\Phi$ has right radius $r>0$, we would obtain a contradiction to our assumption that $\theta$ is strongly left-injective.
\end{proof}
Our next goal is to characterize the automorphisms of $(X_{\theta}, \sigma)$ which have radius $0$. This is particularly interesting as, by Theorem~\ref{strongly-injective-rep} we can assume without loss of generality that $\theta$ is a strongly injective substitution.
Furthermore, by Theorem~\ref{radius0} we know that in this situation any $\Phi$ with $\kappa(\Phi) = 0$ has radius $0$. In particular the local rule is just a map from $\mathcal{A}$ to itself. 

The characterisation that we will give is inspired by the special case of bijective substitutions, and 
 it is instructive   to recall  this special case first. We note that bijective substitutions are strongly injective, so any element of $\ker \kappa$ is a letter exchanging map. Lemanczyk and Mentzen \cite[Theorem 5]{L-M} gave a closed form for the automorphism group of bijective substitutions in the measurable setting. In particular, they consider the group $G$ of permutations generated by the bijections $\{  \theta_i: 1\leq i \leq r \}$ of $\mathcal A$, and they show that a permutation $\tau$ defines a shift commuting
 measurable bijection  of $(X_\theta,\sigma)$ if and only if
it belongs to the centralizer of $G$ in the symmetric group $S_{|\mathcal A|}$.  This statement is still true if we pass to the topological setting. For, first the fact that $\ldots 00$ is the only singular fibre of the equicontinuous factor map
tells us that the  automorphisms of $X_\theta$ must have $\kappa$-value zero.  Our claim 
  can now  be seen using Proposition \ref{prop:last_nail_lemma}, which is effectively telling us that $\tau$ must commute with $G$. In fact we only need a simpler version of Proposition \ref{prop:last_nail_lemma}, namely one that takes into account the fact that $\tau$ must have radius zero.

We recall the definition of the column number in (\ref{column number}),
which  denotes the minimal number of elements of $\mathcal{A}$ one can see in one column when writing the words $\theta^k(a)$ for $a \in \mathcal{A}$ underneath each other.
Recall that we use  $\mathcal{X} \subseteq \mathcal{P}(\mathcal{A}^{c(\theta)})$ to denote the collection of all possible minimal sets for $\theta$.
The minimality of $M \in \mathcal{X}$ implies some useful properties: for any $a,b \in M$ we have that $\theta^k(a)_j \neq \theta^k(b)_j$ for all $k\geq 1$ and $j<r^k$. Otherwise, we would have a contradiction to the minimality of the column number $c$.
This implies that $\theta$ is well-defined on $\mathcal{X}$ and we denote it by $\tilde{\theta}$.
It follows quite easily that $\tilde{\theta}$ is primitive (see~\cite[Proposition 6.3]{L-M-2018}).

\begin{lem}\label{le:cover}
	$\mathcal{X}$ covers $\mathcal{A}$. In other words, every letter of $\mathcal A$ belongs to a set in $\mathcal{X}$.
\end{lem}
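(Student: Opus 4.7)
The plan is to take an arbitrary letter $a \in \mathcal{A}$ and explicitly exhibit a minimal set containing $a$, built by composing a fixed minimal set with a primitivity-provided path to $a$. Concretely, I would start by fixing any element $M_{0} = \theta^{k_{0}}(\mathcal{A})_{j_{0}} \in \mathcal{X}$ (such a set exists by definition of $c(\theta)$ as a minimum). Then, using primitivity of $\theta$, I would find an integer $m$ and some $b_{0} \in M_{0}$ and index $i_{0} < r^{m}$ such that $\theta^{m}(b_{0})_{i_{0}} = a$. Here primitivity is used in the standard form: for $m$ large enough, every letter of $\mathcal{A}$ appears in $\theta^{m}(b)$ for every $b \in \mathcal{A}$, and in particular for every $b \in M_{0}$.

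Next I would analyse the column of $\theta^{k_{0}+m}$ at position $N := j_{0} r^{m} + i_{0}$. A direct unwinding of the iterated substitution gives
\[
\theta^{k_{0}+m}(c)_{N} \;=\; \theta^{m}\!\bigl(\theta^{k_{0}}(c)_{j_{0}}\bigr)_{i_{0}} \qquad \text{for every } c \in \mathcal{A},
\]
so that
\[
\theta^{k_{0}+m}(\mathcal{A})_{N} \;=\; \bigl\{ \theta^{m}(b)_{i_{0}} : b \in M_{0} \bigr\}.
\]
The key observation I need at this point is the injectivity remark made just before the lemma: distinct elements $b, b' \in M_{0}$ satisfy $\theta^{m}(b)_{i} \neq \theta^{m}(b')_{i}$ for every $m$ and $i < r^{m}$ (otherwise iterating would produce a column with fewer than $c$ elements, contradicting minimality of $M_{0}$). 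Hence the set on the right has exactly $|M_{0}| = c$ elements.

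Finally, since $\theta^{k_{0}+m}(\mathcal{A})_{N}$ is a column with exactly $c$ elements, it is by definition an element of $\mathcal{X}$; and by construction it contains $\theta^{m}(b_{0})_{i_{0}} = a$. This finishes the proof. I do not expect any real obstacle: the only subtle point is justifying the cardinality count, which is handled cleanly by the injectivity of $\theta^{m}$ on any minimal set. The rest is bookkeeping on indices in the base-$r$ expansion $N = j_{0} r^{m} + i_{0}$ together with a one-line invocation of primitivity.
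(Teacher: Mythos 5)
Your proof is correct and follows essentially the same route as the paper: start from one minimal set, use primitivity to reach the target letter from an element of that set, and observe that the resulting column is again a minimal set. The only cosmetic difference is that the paper packages the step $\theta^{k_0+m}(\mathcal{A})_{j_0 r^m + i_0} = \{\theta^m(b)_{i_0} : b \in M_0\}$ into the induced map $\tilde\theta$ on $\mathcal{X}$, whereas you unwind the index bookkeeping explicitly.
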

\begin{proof}
	This is Equation (37) in \cite{L-M-2018}, but we include the verification here for completeness.
	We need to show that for any $b \in \mathcal{A}$ there exists $M' \in \mathcal{X}$ with $b \in M'$.
	Obviously $\mathcal{X} \neq \emptyset$. Thus, there exists some $M$ and $a$ such that $a \in M \in \mathcal{X}$. As $\theta$ is primitive, there exist $k, j<r^k$ with $\theta^{k}(a)_j = b$. Now obviously $b \in \tilde{\theta}^k(M)_j =: M'$.
\end{proof}

This allows us to find our first restrictions for automorphisms with radius $0$.
\begin{lem}
	Suppose that $\tau$ is an automorphism of $(X_{\theta}, \sigma)$ with radius $0$.
	Then, for any $M \in \mathcal{X}$ we have that $\tau|_{M}$ is a bijection.
\end{lem}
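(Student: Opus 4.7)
The plan is to exploit the commutation relation that Proposition~\ref{prop:radius_two} imposes on automorphisms with $\kappa$-value zero. First, $\tau$, being a radius-$0$ automorphism, is implemented by a bijection $f : \mathcal{A} \to \mathcal{A}$ acting coordinate-wise. Some power $\tau^m$ equals the identity, so $m\,\kappa(\tau) = 0$ in $\Z_r$; since $\Z_r$ is torsion-free, $\kappa(\tau) = 0$. Proposition~\ref{prop:radius_two} then gives $\theta^{-c!} \circ \tau \circ \theta^{c!} = \tau$, which coordinate-wise is the statement that $f$ commutes with each column map $\theta^{c!}_i : \mathcal{A} \to \mathcal{A}$ for $0 \le i < r^{c!}$, and hence with every composition of such maps.

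Next I would rewrite $M \in \mathcal{X}$ as the image of a column composition. Writing $M = \theta^n(\mathcal{A})_j$ and letting $\phi = \theta_{i_n} \circ \cdots \circ \theta_{i_1}$ where the $i_k$ are the base-$r$ digits of $j$, we have $M = \phi(\mathcal{A})$. The key observation is that $\phi$ maps $M$ bijectively onto itself: $\phi(M) \subseteq \phi(\mathcal{A}) = M$, while $\phi^2(\mathcal{A}) = \phi(M)$ is the image of a length-$2n$ column composition and therefore has cardinality at least $c(\theta) = |M|$, forcing $\phi(M) = M$. Iterating yields $\phi^k(\mathcal{A}) = M$ for every $k \ge 1$.

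Now take $k = c!$. The composition $\phi^{c!}$ has length $n \cdot c!$; regrouping into blocks of $c!$ consecutive $\theta_i$'s rewrites $\phi^{c!}$ as a composition of $n$ maps drawn from $\{\theta^{c!}_j : 0 \le j < r^{c!}\}$, so $\phi^{c!}$ commutes with $f$. Applying both sides to $\mathcal{A}$ yields
\[
f(M) \;=\; f(\phi^{c!}(\mathcal{A})) \;=\; \phi^{c!}(f(\mathcal{A})) \;=\; \phi^{c!}(\mathcal{A}) \;=\; M,
\]
where we have used that $f$ is a bijection of $\mathcal{A}$. Since $f$ is injective on $\mathcal{A}$ it is injective on the subset $M$, and $M$ being finite, $f|_M : M \to M$ is a bijection.

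The main technical point will be the bookkeeping step that lets one represent $M$ as the image of a column composition whose length is divisible by $c!$; this is what upgrades the commutation relation from the individual $\theta^{c!}_j$'s to the full $\phi^{c!}$. Everything else is essentially dictated by Proposition~\ref{prop:radius_two} and the definition of a minimal set.
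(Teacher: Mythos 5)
Your argument is correct and is essentially the paper's: both rest on the commutation relation $\tau\circ\theta^{c!}=\theta^{c!}\circ\tau$ (the paper gets it from a variant of Proposition~\ref{prop:last_nail_lemma}, you from Proposition~\ref{prop:radius_two}, which is the same identity) together with writing $M$ as the image of $\mathcal{A}$ under a composition of column maps, so that $\tau(M)\subseteq M$ and injectivity of $\tau$ on $\mathcal{A}$ finishes the job. Your extra bookkeeping — showing $\phi|_M$ is a bijection so that $\phi^{c!}(\mathcal{A})=M$ and then regrouping the composition into columns of $\theta^{c!}$ — is a careful way of justifying the step the paper dispatches by simply ``replacing $\theta$ by $\theta^{c!}$'', so no complaints there.
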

\begin{proof}
	We know by a modified version of Proposition \ref{prop:last_nail_lemma}   (see for example \cite[Lemma 2]{B-R-Y}) that 
	\begin{align*}
		\tau \circ \theta^{c!} = \theta^{c!} \circ \tau.
	\end{align*}
	Thus by changing to $\theta^{c!}$ we can assume without loss of generality that
	\begin{align}\label{eq:tau_commute}
		\tau \circ \theta = \theta \circ \tau.
	\end{align}
	By the surjectivity of $\tau$ it follows directly that $\tau$ is a bijection on $\mathcal{A}$.
	We know that for any $M \in \mathcal{X}$ there exists $k \in \N, j< r^k$ with $\theta^{k}(\mathcal{A})_j = M$.
	Thus, for any $a \in M$ there exists $b \in \mathcal{A}$ such that $\theta^k(b)_j = a$.
	This gives by~\eqref{eq:tau_commute}
	\begin{align*}
		\tau(a) = \tau \circ \theta^k_j(b) = \theta^k_j \circ \tau(b) \in M.
	\end{align*}
	So $\tau$ maps $M$ to itself and as $\tau$ is injective it is bijective on $M$.
\end{proof}

So we know that $\tau$ is a bijection on every minimal set and by~\eqref{eq:tau_commute} we would expect these bijections to be compatible. Next we aim to relate elements of minimal sets to each other.
Therefore we fix some $M_0 \in \mathcal{X}$ and $k_0$  with
\begin{align}\label{definition-of-M_0}
 	\theta^{k_0}(\mathcal{A})_{j_0} = M_0 \mbox{ for some } j_0 < r^{k_0}.
\end{align}
Furthermore, we can assume without loss of generality that $\theta^{k_0}(a)_{j_0} = a$ for every $a \in M_0$.
This is due to the fact that $\theta^{k_0}_{j_0}$ defines a bijection on $M_0$ and taking a power yields the result.
Now we fix an arbitrary bijection $f_0: M_0 \to \{1,\ldots,c\}$ which allows us to define $f: \mathcal{A} \to \{1,\ldots,c\}$ by
\begin{align}\label{definition-of-f}
	f(a) = f_0(\theta^{k_0}(a)_{j_0}).
\end{align}
In particular, we have $f|_{M_0} = f_0$. Note also that the minimality of $c$ implies that  for any $M \in \mathcal{X}$ we have that 
$f|_{M}$
 is a bijection between $M$ and $\{1,\ldots,c\}$.
\begin{lem}
	There exists a bijection $\tau'$ on $\{1,\ldots,c\}$ such that $f \circ \tau = \tau' \circ f$.
\end{lem}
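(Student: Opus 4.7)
The plan is to first reformulate the desired equation $f\circ\tau = \tau'\circ f$ as a well-definedness statement: a bijection $\tau'$ on $\{1,\ldots,c\}$ satisfying this identity exists precisely when the implication $f(a)=f(b) \Rightarrow f(\tau(a))=f(\tau(b))$ holds for all $a,b\in\mathcal A$. Once that implication is established, $\tau'$ can be defined unambiguously by $\tau'(i):=f(\tau(a))$ for any $a\in\mathcal A$ with $f(a)=i$, where such an $a$ exists because $\mathcal X$ covers $\mathcal A$ by Lemma \ref{le:cover} and $f|_M$ is a bijection onto $\{1,\ldots,c\}$ for every $M\in\mathcal X$.

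The main step is to prove the implication. By the definition of $f$ in \eqref{definition-of-f} and the fact that $f_0$ is a bijection, $f(a)=f(b)$ is equivalent to $\theta^{k_0}(a)_{j_0}=\theta^{k_0}(b)_{j_0}$. Now I exploit the commutation relation $\tau\circ\theta=\theta\circ\tau$ provided by \eqref{eq:tau_commute} (available after passing to $\theta^{c!}$, which does not alter the minimal sets or the setup). Because $\tau$ has radius zero, this commutation says exactly that $\tau$ applied to any coordinate of $\theta(a)$ equals the corresponding coordinate of $\theta(\tau(a))$; iterating $k_0$ times yields
\begin{equation*}
\tau\bigl(\theta^{k_0}(a)_{j_0}\bigr)=\theta^{k_0}(\tau(a))_{j_0}.
\end{equation*}
Applying $\tau$ to both sides of the equality $\theta^{k_0}(a)_{j_0}=\theta^{k_0}(b)_{j_0}$ then gives $\theta^{k_0}(\tau(a))_{j_0}=\theta^{k_0}(\tau(b))_{j_0}$, which in turn is equivalent to $f(\tau(a))=f(\tau(b))$, as required.

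Finally, to verify that $\tau'$ is itself a bijection on $\{1,\ldots,c\}$, I will restrict attention to the distinguished minimal set $M_0$. The previous lemma asserts $\tau(M_0)=M_0$, so $\tau|_{M_0}$ is a bijection from $M_0$ to $M_0$. Since $f|_{M_0}=f_0$ is also a bijection onto $\{1,\ldots,c\}$, the map $f\circ\tau|_{M_0}=\tau'\circ f|_{M_0}$ is a bijection on $\{1,\ldots,c\}$, which forces $\tau'$ to be a bijection. I do not anticipate a serious obstacle in this argument; the only subtlety is keeping straight that the radius-zero condition on $\tau$ is precisely what lets one pass the commutation relation $\tau\theta=\theta\tau$ through to individual coordinates of $\theta^{k_0}$, which is the content of the identity displayed above.
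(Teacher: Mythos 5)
Your proposal is correct and follows essentially the same route as the paper: both reduce the existence of $\tau'$ to the well-definedness implication $f(a)=f(b)\Rightarrow f(\tau(a))=f(\tau(b))$, and both derive it by passing the commutation relation $\tau\circ\theta=\theta\circ\tau$ through the coordinate map $\theta^{k_0}_{j_0}$ and using that $f_0$ is a bijection on $M_0$. Your additional verification that $\tau'$ is a bijection (via $\tau(M_0)=M_0$ from the preceding lemma) is a point the paper leaves implicit, and it is fine.
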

\begin{proof}
	Let us take $a_1 \neq a_2$ such that $f(a_1) = f(a_2)$. We need to show that $f(\tau(a_1)) = f(\tau(a_2))$.
	By the definition of $f$ we have 
	\begin{align*}
		\theta^{k_0}(a_1)_{j_0} = \theta^{k_0}(a_2)_{j_0}.
	\end{align*}
	By~\eqref{eq:tau_commute} this gives
	\begin{align*}
		f(\tau(a_1)) &= f_0(\theta^{k_0}_{j_0} \circ \tau (a_1)) = f_0(\tau \circ \theta^{k_0}_{j_0}(a_1))\\
			& = f_0(\tau \circ \theta^{k_0}_{j_0}(a_2)) = f_0(\theta^{k_0}_{j_0} \circ \tau (a_2)) = f(\tau(a_2)).
	\end{align*}
\end{proof}

However, there is still one more condition that $\tau$ has to satisfy. For this remaining condition we use a generalisation of Lemanczyk and Mentzen's idea for bijective substitutions to assign to a general $\theta$ a group of permutations called $G$. The following was presented by the first author and Lemanczyk in~\cite{L-M-2018} and we follow their notation. Let us fix an arbitrary $M \in \mathcal{X}$. Then $\theta_j$ is a bijection from $M$ to $\tilde{\theta}(M)_j$. Thus it corresponds via $f$ to a permutation of $\{1,\ldots,c\}$ which we denote by $\sigma_{M,j}$. More precisely we have $\sigma_{M,j}(m) = n$ if there exist $a \in M, b \in \tilde{\theta}(M)_j$ with $f(a) = n, f(b) = m$ such that $\theta(a)_j = b$.
Another way to define it is via the relation
\begin{align}\label{eq:sigma_Mj}
	\sigma_{M,j}^{-1}(f(a)) = f(\theta(a)_j).
\end{align}
Furthermore, we denote by $G$ the group generated by all the $\sigma_{M,j}$:
\begin{align*}
	G(\theta) := <\sigma_{M,j}: M \in \mathcal{X}, 0\leq j \leq r-1>.
\end{align*}
We have gathered now all the information needed for the following Theorem. In what follows the function $f$ is that defined in Equation \eqref{definition-of-f}.
\begin{thm}\label{characterisation-of-kernel}
	Let $\theta$ be a primitive and injective length $r$  substitution of height $1$ 
such that $X_\theta$ is infinite.
	Then $\tau$ is an automorphism with radius $0$ if and only if
	\begin{enumerate}[(i)]
		\item $\tau$ is a bijection on every minimal set $M \in \mathcal{X}(\theta)$.
		\item $\tau$ is well-defined on $\{1,\ldots,c\}$ via $f$, which we denote by $\tau'$, i.e. $f \circ \tau = \tau' \circ f$.
		\item $\tau'$ belongs to the centralizer of $G(\theta)$.
	\end{enumerate}
\end{thm}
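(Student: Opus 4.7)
The $(\Rightarrow)$ direction is largely collected from the two lemmas immediately preceding the statement: the first produces (i), and the second produces a permutation $\tau'$ on $\{1,\ldots,c\}$ satisfying (ii). To establish (iii), I would work after the WLOG reduction $\tau\circ\theta=\theta\circ\tau$ used in the preceding lemma, fix $M\in\mathcal X$, $0\le j<r$, and $a\in M$, and chain the identities \eqref{eq:sigma_Mj}, (ii), and $\tau\theta=\theta\tau$. The resulting short calculation $\tau'\sigma_{M,j}^{-1}(f(a))=\tau'f(\theta(a)_j)=f(\tau(\theta(a)_j))=f(\theta(\tau(a))_j)=\sigma_{M,j}^{-1}f(\tau(a))=\sigma_{M,j}^{-1}\tau'(f(a))$ (using the first lemma to know $\tau(a)\in M$ when invoking \eqref{eq:sigma_Mj} on the fourth equality) shows that $\tau'$ commutes with every generator of $G(\theta)$, as required.

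For the converse, I plan three steps. First, Lemma~\ref{le:cover} together with (i) implies that $\tau(\mathcal A)=\bigcup_{M\in\mathcal X}\tau(M)=\bigcup_{M\in\mathcal X}M=\mathcal A$, so finiteness of $\mathcal A$ makes $\tau$ a bijection on letters. Second, I would prove the letter identity $\tau(\theta(a)_j)=\theta(\tau(a))_j$ for all $a\in \mathcal A$ and $0\le j<r$. Pick any $M\in\mathcal X$ with $a\in M$; then $\theta_j\colon M\to\tilde\theta(M)_j$ is a bijection between two minimal sets, and by (i) both $\tau(\theta(a)_j)$ and $\theta(\tau(a))_j$ lie in $\tilde\theta(M)_j\in\mathcal X$, on which $f$ is injective. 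Applying (ii), then the centralizer hypothesis (iii), then (ii) again yields $f(\tau(\theta(a)_j))=\tau'\sigma_{M,j}^{-1}f(a)=\sigma_{M,j}^{-1}\tau'f(a)=f(\theta(\tau(a))_j)$, and injectivity of $f$ on $\tilde\theta(M)_j$ gives the desired letter equality; iteration yields $\tau\theta^n=\theta^n\tau$ for all $n$.

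Third, since every word of $\mathcal L_{X_\theta}$ is a factor of some $\theta^n(a)$, Step~2 gives $\tau(\mathcal L_{X_\theta})\subseteq\mathcal L_{X_\theta}$, so $\tau$ induces a continuous, shift-commuting, radius-$0$ map $X_\theta\to X_\theta$. The inverse of $\tau$ on $\mathcal A$ satisfies (i)--(iii) with $\tau'$ replaced by $(\tau')^{-1}$, hence by the same argument defines a shift-commuting inverse map, establishing $\tau\in\Aut(X_\theta,\sigma)$. The main subtle point I expect is Step~2 of the converse: lifting an $f$-level equality to a genuine letter equality requires that both candidate letters sit inside a \emph{common} minimal set on which $f$ is injective, which is exactly what (i), applied to $M$ and to $\tilde\theta(M)_j$, guarantees—hence (ii) and (iii) alone would not suffice.
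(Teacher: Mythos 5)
Your proposal is correct and follows essentially the same route as the paper: the forward direction is the same chain of equalities through \eqref{eq:sigma_Mj}, (ii) and \eqref{eq:tau_commute}, and the converse is the same computation showing $\theta(\tau(a))_j=\tau(\theta(a)_j)$ by passing through $f$-values and using injectivity of $f$ on the minimal set $\tilde\theta(M')_j$, which is exactly where the paper also invokes (i). The only difference is bookkeeping: the paper appeals to (a radius-zero version of) Proposition~\ref{prop:last_nail_lemma} to conclude that verifying \eqref{eq:tau_commute} suffices, whereas you argue bijectivity and invertibility directly; both are fine.
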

\begin{proof}
	First we still need to show that any automorphism $\tau$ with radius $0$ fulfills (iii):
	Obviously it is sufficient to show that $\tau'$ commutes with every $\sigma^{-1}_{M,j}$.
	We find for any $a \in M$,
	\begin{align*}
		\tau' \circ \sigma^{-1}_{M,j} (f(a)) &= \tau' \circ f(\theta(a)_j)) = f \circ \tau (\theta(a)_j) = f(\theta(\tau(a))_j)\\
			& = \sigma^{-1}_{M,j} \circ f \circ \tau(a) = \sigma^{-1}_{M,j} \circ \tau' (f(a)).
	\end{align*}
	Here the first and fourth equation are due to~\eqref{eq:sigma_Mj}, the second and fifth equation follow from (ii) and the third equation from~\eqref{eq:tau_commute}.
	
	It remains to show that any $\tau$ that satisfies (i) - (iii) indeed gives an automorphism. It is sufficient to check~\eqref{eq:tau_commute}  as this implies $\tau(\theta^k(a)[j-1,j+1]) = \theta^k(\tau(a))[j-1,j+1] \in \mathcal{L}_{\theta}$.
	Let $a \in \mathcal{A}$ be an arbitrary element and $j<r$. 
	By Lemma~\ref{le:cover} we know that $a \in M'$ for some $M' \in \mathcal{X}$.
	By (i) we have that $\tau(a) \in M'$. It follows by the definition of $\tilde{\theta}$ that $\theta(a)_j, \theta(\tau(a))_j \in \tilde{\theta}(M')_j =: M$. 
	This gives
	\begin{align*}
		\theta(\tau(a))_j &= f|_{M}^{-1} \circ f (\theta(\tau(a))_j)\\
			&= f|_{M}^{-1} \circ \sigma_{M,j}^{-1} \circ f \circ \tau(a)\\
			&= f|_{M}^{-1} \circ \sigma_{M,j}^{-1} \circ \tau' \circ f (a)\\
			&= f|_{M}^{-1} \circ \tau' \circ \sigma_{M,j}^{-1} \circ f(a)\\
			&= f|_{M}^{-1} \circ \tau' \circ f(\theta(a)_j)\\
			&= \tau \circ f|_{M}^{-1} \circ f(\theta(a)_j)\\
			&= \tau (\theta(a))_j.
	\end{align*}
	Here the second and fifth equation hold by~\eqref{eq:sigma_Mj}, the third and sixth equation by (ii) and the fourth equation by (iii).
\end{proof}

\begin{remark}
	In the case of bijective substitution we have that for any element in the centralizer of $G$ there exists an automorphism. This is no longer the case in the general situation as the following example shows.
\end{remark}
\begin{example}\label{centraliser-larger-automorphism}
	We consider the strongly injective substitution
	\begin{align*}
		\theta(a) &= aac\\
		\theta(b) &= bba\\
		\theta(c) &= bca.
	\end{align*}
	We find directly that $\mathcal{X} = \{\{a,b\}, \{a,c\}\}$.
	We have $\theta(a)_0 = a, \theta(b)_0 = b, \theta(c)_0 = b$. Thus we define $f(a) = 1, f(b) = f(c) = 2$.
	This gives
	\begin{align*}
		\sigma_{\{a,b\}, 0} &= id\\
		\sigma_{\{a,b\}, 1} &= id\\
		\sigma_{\{a,b\}, 2} &= (12)\\
		\sigma_{\{a,c\}, 0} &= id\\
		\sigma_{\{a,c\}, 1} &= id\\
		\sigma_{\{a,c\}, 2} &= (12).
	\end{align*}
	Therefore, we have $G(\theta) = \{id, (12)\}$ and the centralizer of $G$ is $G$ itself.
	However, as $a \in \{a,b\}$ and $a \in \{a,c\}$ we know by (i) that $\tau(a) = a$ and therefore $\tau(b) = b, \tau(c) = c$.
	This gives an automorphism with $\tau = id$ which corresponds to $\tau' = id$, but we don't have any automorphism corresponding to $\tau' = (12)$.
\end{example}

Because of  the situation in Example \ref{centraliser-larger-automorphism}, Theorem \ref{characterisation-of-kernel} is not entirely satisfactory. We recall that Lemanczyk and Mentzen were working in a measurable context, and this raises the question of whether we can hope for a better version of Theorem \ref{characterisation-of-kernel} if we move to the measurable setting. It turns out that this is indeed the case, as we shall next show in Theorem \ref{best-result-yet}.  To do this we recall the notion of a {\em reduced} substitution, due to Host and Parreau \cite{HP}.

We denote by $d^{*}: (\mathcal{A}^2)^{*} \to \R$,
\begin{align*}
	d^{*}(a_1\ldots a_n, b_1\ldots b_n) = \frac{|\{1\leq i \leq n: a_i \neq b_i\}|}{n}.
\end{align*}
\begin{definition}
	We call a primitive length $r$ substitution \emph{reduced} if for all $a,b\in \mathcal{A}$ we have
	\begin{align*}
		\lim_{k\to\infty} d^{*}(\theta^{k}(a), \theta^{k}(b)) > 0.
	\end{align*}
\end{definition}
One can reduce a substitution by identifying all $a,b\in \mathcal{A}$ for which
\begin{align*}
	\lim_{k\to\infty} d^{*}(\theta^{k}(a), \theta^{k}(b)) = 0.
\end{align*}
When reducing a substitution, the related dynamical system does not change measure-theoretically.

\begin{lem}\label{le_reduce_f}
	Let $\theta$ be a primitive length $r$ substitution.
	Then we have for any $a,b \in \mathcal{A}$
	\begin{align}\label{eq_reduce_ab}
		\lim_{k\to \infty} d^{*}(\theta^{k}(a), \theta^{k}(b)) = 0
	\end{align}
	if and only if  for each $k$, we have
	\begin{align}\label{eq_f_equal}
		f(\theta^{k}(a)_j) = f(\theta^{k}(b)_j) \text{ for all }  j<r^{k}.
	\end{align}
\end{lem}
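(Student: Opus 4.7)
The plan is to prove both directions by exploiting the defining property of the minimal set $M_0$ used to construct $f$. Recall that $f(a)=f(b)$ if and only if $\theta^{k_0}(a)_{j_0}=\theta^{k_0}(b)_{j_0}$, because $f_0$ is a bijection on $M_0$ and $\theta^{k_0}(\cdot)_{j_0}$ maps $\mathcal A$ onto $M_0$. Before starting, I would establish the following rigidity statement: \emph{if $\alpha\ne\beta$ both lie in $M_0$, then $\theta^{n}(\alpha)_\ell\ne\theta^{n}(\beta)_\ell$ for every $n\ge 0$ and every $\ell<r^{n}$.} Indeed, for any $a\in\mathcal A$ one has $\theta^{k_0+n}(a)_{r^{n}j_0+\ell}=\theta^{n}(\theta^{k_0}(a)_{j_0})_\ell$, so the column $\theta^{k_0+n}(\mathcal A)_{r^{n}j_0+\ell}$ equals $\{\theta^{n}(m)_\ell:m\in M_0\}$; an equality $\theta^{n}(\alpha)_\ell=\theta^{n}(\beta)_\ell$ would force this column to have strictly fewer than $c$ elements, contradicting the defining minimality of the column number.

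For (ii)~$\Rightarrow$~(i), I would first note the elementary contraction $d^*(\theta(u),\theta(v))\le d^*(u,v)$ for equal-length $u,v$, which reduces the problem to establishing decay along the subsequence $mk_0$. Write $\theta^{(m+1)k_0}(a)$ and $\theta^{(m+1)k_0}(b)$ as concatenations of $r^{mk_0}$ blocks of length $r^{k_0}$, the $j$-th block being $\theta^{k_0}$ applied to $\theta^{mk_0}(a)_j$, respectively to $\theta^{mk_0}(b)_j$. Hypothesis~(ii) at the level $k=mk_0$ forces equality at position $j_0$ inside every block, because
\[
\theta^{(m+1)k_0}(a)_{r^{k_0}j+j_0}=\theta^{k_0}(\theta^{mk_0}(a)_j)_{j_0}=\theta^{k_0}(\theta^{mk_0}(b)_j)_{j_0}=\theta^{(m+1)k_0}(b)_{r^{k_0}j+j_0}.
\]
The remaining $r^{k_0}-1$ positions inside a given block can contribute a disagreement only when the parent pair $(\theta^{mk_0}(a)_j,\theta^{mk_0}(b)_j)$ is already disagreeing, so a direct count yields
\[
d^*(\theta^{(m+1)k_0}(a),\theta^{(m+1)k_0}(b))\le \frac{r^{k_0}-1}{r^{k_0}}\,d^*(\theta^{mk_0}(a),\theta^{mk_0}(b)),
\]
and iterating produces geometric decay to zero.

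For (i)~$\Rightarrow$~(ii) I would argue by contrapositive. Suppose $f(\theta^{k}(a)_j)\ne f(\theta^{k}(b)_j)$ for some $k$ and some $j<r^{k}$, and set $\alpha':=\theta^{k_0}(\theta^{k}(a)_j)_{j_0}$, $\beta':=\theta^{k_0}(\theta^{k}(b)_j)_{j_0}$; these are distinct elements of $M_0$. By the rigidity statement, $\theta^{n}(\alpha')_\ell\ne\theta^{n}(\beta')_\ell$ for every $n$ and every $\ell<r^{n}$. Unfolding the compositions, for each $n\ge 0$ and each $\ell<r^{n}$ the position $J=r^{k_0+n}j+r^{n}j_0+\ell$ satisfies $\theta^{k+k_0+n}(a)_J=\theta^{n}(\alpha')_\ell$ and $\theta^{k+k_0+n}(b)_J=\theta^{n}(\beta')_\ell$, which disagree. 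These amount to $r^{n}$ guaranteed disagreements among $r^{k+k_0+n}$ positions, so
\[
d^*(\theta^{k+k_0+n}(a),\theta^{k+k_0+n}(b))\ge r^{-(k+k_0)}
\]
uniformly in $n$, and the monotone-decrease property $d^*(\theta^{K+1}(\cdot),\theta^{K+1}(\cdot))\le d^*(\theta^{K}(\cdot),\theta^{K}(\cdot))$ then forces the limit to be strictly positive.

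The principal delicate point is the position bookkeeping at different iteration depths, particularly checking in the contrapositive that a single bad pair $(\alpha',\beta')\in M_0^{2}$ really produces, after applying $\theta^{n}$, an entire column of $r^{n}$ guaranteed disagreement positions rather than only one. The minimal-set rigidity lemma supplies exactly this uniform lower bound; once it is in place, both implications reduce to routine counting.
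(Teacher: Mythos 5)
Your proof is correct. The contrapositive direction (\eqref{eq_reduce_ab} fails when \eqref{eq_f_equal} fails) is essentially identical to the paper's argument: both locate a pair of distinct letters $a',b'\in M_0$ at depth $k+k_0$, invoke the defining minimality of the column number to propagate the disagreement down every column below that position, and deduce the lower bound $r^{-(k+k_0)}$ on $d^*$. Where you genuinely diverge is the direction \eqref{eq_f_equal}~$\Rightarrow$~\eqref{eq_reduce_ab}. The paper observes that a letter lying in a minimal set $N$ is determined by its $f$-value together with $N$ (since $f|_N$ is a bijection onto $\{1,\ldots,c\}$), and then reduces the convergence to the statement $\lim_k d^*(\tilde\theta^k(M_1),\tilde\theta^k(M_2))=0$ for the induced substitution $\tilde\theta$ on minimal sets, which holds because $c(\tilde\theta)=1$; this leans on the previously developed machinery of $\mathcal X$ and the standard coincidence-implies-$d^*\to 0$ fact. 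You instead unpack that mechanism by hand: hypothesis \eqref{eq_f_equal} at depth $mk_0$ forces agreement at position $j_0$ of every length-$r^{k_0}$ block at depth $(m+1)k_0$, giving the contraction factor $(r^{k_0}-1)/r^{k_0}$ per $k_0$ steps, and the monotonicity $d^*(\theta(u),\theta(v))\le d^*(u,v)$ upgrades decay along the subsequence $mk_0$ to decay of the full sequence. Your route is more self-contained (it never needs $\tilde\theta$ or its column number for this implication) and yields an explicit geometric rate; the paper's is shorter because it reuses structure already established for Theorem \ref{characterisation-of-kernel}. Both are sound, and your position bookkeeping ($J=r^{k_0+n}j+r^nj_0+\ell$, and the identification of the column $\theta^{k_0+n}(\mathcal A)_{r^nj_0+\ell}$ with $\{\theta^n(m)_\ell:m\in M_0\}$) checks out.
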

\begin{proof}
	Let $a\in M\in \mathcal{X}(\theta)$. Then $\theta^{k}(a)_j$ is uniquely determined by $f(\theta^{k}(a)_j)$ and $\tilde{\theta}^k(M)_j$.
	Furthermore, as $c(\tilde{\theta}) = 1$ we know that
	\begin{align*}
		\lim_{k\to\infty}d^{*}(\tilde{\theta}^{k}(M_1), \tilde{\theta}^{k}(M_2)) = 0,
	\end{align*}
	for all $M_1,M_2 \in \mathcal{X}(\theta)$. This shows that \eqref{eq_f_equal} implies \eqref{eq_reduce_ab}.\\
	Let us now assume that there exists $k'$ and  $j'<r^{k'}$ such that
	\begin{align*}
		f(\theta^{k'}(a)_{j'}) \neq f(\theta^{k'}(b)_{j'}).
	\end{align*}
	Recall  the definition of $M_0$, $k_0$, $j_0$ and $f$ via $(\ref{definition-of-M_0})$ and  $(\ref{definition-of-f})$. We know that
	\begin{align*}
		f_0(\theta^{k_0+k'}(a)_{j' r^{k_0} + j_0}) =  f_0(\theta^{k_0}(\theta^{k'}(a)_{j'})_{j_0})\neq f_0(\theta^{k_0}(\theta^{k'}(b)_{j'})_{j_0}) = f_0(\theta^{k_0+k'}(b)_{j' r^{k_0} + j_0}).
	\end{align*}
	As $f_0$ is a bijection between $\{1,\ldots,c\}$ and  $M_0$ we know that
	\begin{align*}
		a' := \theta^{k_0+k'}(a)_{j' r^{k_0} + j_0}\neq \theta^{k_0+k'}(b)_{j' r^{k_0} + j_0} =: b'
	\end{align*}
	and also $a', b' \in M_0$.
	We know by the definition of minimal sets that for all $k $ and $ j<r^{k}$
	\begin{align*}
		\theta^{k}(a')_{j} \neq \theta^{k}(b')_{j}.
	\end{align*}
	Thus we find that
	\begin{align*}
		\lim_{k\to \infty} d^{*}(\theta^{k}(a), \theta^{k}(b)) \geq \frac{1}{r^{k'+k_0}}.
	\end{align*}
\end{proof}

\begin{cor}\label{automorphism=centraliser}
	Let $\theta$ be a reduced and primitive substitution of constant length $r$.
	Then every $\tau'$ in the centraliser of $G(\theta)$ defines an automorphism $\tau\in \Aut(X_\theta,\sigma).$
\end{cor}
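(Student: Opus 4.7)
The plan is to appeal directly to Theorem \ref{characterisation-of-kernel}: given $\tau'$ in the centralizer of $G(\theta)$, I must construct a bijection $\tau\colon\mathcal{A}\to\mathcal{A}$ which restricts to a bijection on each $M\in\mathcal{X}$ and satisfies $f\circ\tau=\tau'\circ f$. The natural recipe is as follows. Since $\tilde\theta$ is primitive on $\mathcal{X}$, for every $a\in\mathcal{A}$ there exist $k\geq 0$, $j<r^{k}$ and $c\in M_0$ with $\theta^{k}(c)_j=a$; declare $\tau(a):=\theta^{k}(c')_j$, where $c'\in M_0$ is the unique element with $f(c')=\tau'(f(c))$.

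The main obstacle is to show this is well-defined, and here reducedness is essential (without it, Example \ref{centraliser-larger-automorphism} shows the recipe can fail). Iterating the identity \eqref{eq:sigma_Mj} gives, for any $c\in M_0$ and any $k,j$, an expression of the form $f(\theta^{k}(c)_j)=\pi^{-1}(f(c))$, where $\pi$ is a product of generators $\sigma_{M,j'}$ of $G(\theta)$ determined entirely by the sequence of minimal sets traversed from $M_0$ to $\tilde\theta^{k}(M_0)_j$, and in particular independent of which $c\in M_0$ is chosen. Since $\tau'$ centralizes $G(\theta)$ it commutes with every such $\pi^{-1}$, so a direct computation shows
\[
f\bigl(\theta^{k}(b_i)_j\bigr)=\tau'\bigl(f(\theta^{k}(a)_j)\bigr)
\]
for any $k,j$ and for both $i\in\{1,2\}$, whenever $\theta^{k_i}(c_i)_{j_i}=a$ and $b_i:=\theta^{k_i}(c'_i)_{j_i}$. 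If $b_1\neq b_2$, Lemma \ref{le_reduce_f} combined with reducedness produces some $k,j$ making these two quantities unequal, a contradiction. Hence $b_1=b_2$ and $\tau$ is well-defined.

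With well-definedness established, the very same centralizer computation (specialising to $k=0$) yields $f(\tau(a))=\tau'(f(a))$, which is condition (ii) of Theorem \ref{characterisation-of-kernel}. For condition (i), any $M\in\mathcal{X}$ can be written as $\tilde\theta^{k_1}(M_0)_{j_1}$ by primitivity of $\tilde\theta$, and the recipe manifestly sends $M$ into $M$; bijectivity of $\tau|_{M}$ then follows because $c\mapsto c'$ is a bijection of $M_0$ (induced by $\tau'$ via $f_0$) and $\theta^{k_1}(\cdot)_{j_1}\colon M_0\to M$ is a bijection by the minimality of $c(\theta)$. Global bijectivity of $\tau$ then follows from Lemma \ref{le:cover}. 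Finally, condition (iii) of Theorem \ref{characterisation-of-kernel} holds by the very hypothesis placed on $\tau'$, so Theorem \ref{characterisation-of-kernel} delivers the desired automorphism $\tau\in\Aut(X_{\theta},\sigma)$.
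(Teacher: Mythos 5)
Your proposal is correct and follows essentially the same route as the paper: your recipe $\tau(a):=\theta^{k}(c')_j$ coincides with the paper's $\tau(a)=f|_{M}^{-1}(\tau'(f(a)))$ on $M=\tilde\theta^{k}(M_0)_j$, and the well-definedness argument (the centralizer property forces $f(\theta^{k}(b_1)_j)=f(\theta^{k}(b_2)_j)$ for all $k,j$, after which Lemma \ref{le_reduce_f} and reducedness give $b_1=b_2$) is exactly the one in the paper. The remaining verification via Theorem \ref{characterisation-of-kernel} also matches.
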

\begin{proof}
	As $\theta$ is reduced this implies that $\theta$ is strongly injective, so we know that any automorphism with $\kappa$-value $0$ has radius $0$. 
	We need to define $\tau: \mathcal{A} \to \mathcal{A}$ that satisfies the conditions of Theorem~\ref{characterisation-of-kernel}. Fix any $f$ defined as in (\ref{definition-of-f}).
	Let $a \in M \in \mathcal{X}$. Then we define
	\begin{align}\label{eq_def_tau}
		\tau(a) = f|_{M}^{-1}(\tau'(f(a))),
	\end{align}
	and we need to make sure that this definition is independent of the choice of $M$:\\
	Let $a \in M_1 \cap M_2$. 
	Then we set
	\begin{align*}
		b_i = f|_{M_i}^{-1}(\tau'(f(a))).
	\end{align*}
	So we have $b_i \in M_i$ and
	\begin{align*}
		f(b_i) = \tau'(f(a)).
	\end{align*}
	We compute
	\begin{align*}
		\tau'(f(\theta^{k}(a)_j)) &= \tau' \circ (\sigma_{M_i,j}^{(k)})^{-1} (f(a))\\
			&= (\sigma_{M_i,j}^{(k)})^{-1}(\tau'(f(a)))\\
			&= (\sigma_{M_i,j}^{(k)})^{-1}(f(b_i))\\
			&= f(\theta^{k}(b_i)_j).
	\end{align*}
	for $i = 1,2$.
	Thus we know that $f(\theta^{k}(b_1)_j) = f(\theta^{k}(b_2)_j)$ for all $k, j<r^k$. By Lemma~\ref{le_reduce_f} we have
	\begin{align*}
		\lim_{k\to\infty} d^{*}(\theta^k(b_1), \theta^k(b_2)) = 0.
	\end{align*}
	As $\theta$ is reduced we conclude that $b_1 = b_2$, so that~\eqref{eq_def_tau} is well defined and one checks easily that $\tau$ fulfils the conditions of Theorem~\ref{characterisation-of-kernel}.
\end{proof}

\begin{lem}\label{centralisers-are-equal}
	Let $\theta$ be a primitive length $r$ substitution and let $\theta'$ be the reduced substitution of $\theta$.
	Then  $G(\theta) = G(\theta')$.
\end{lem}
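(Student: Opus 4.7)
The plan is to exhibit compatible choices of the function $f$ for $\theta$ and for $\theta'$ under which the generators $\sigma_{M,j}^{\theta}$ of $G(\theta)$ and $\sigma_{M',j}^{\theta'}$ of $G(\theta')$ coincide as permutations of $\{1,\ldots,c\}$, once we identify minimal sets of $\theta'$ with equivalence classes of minimal sets of $\theta$ under the relation $a \sim b \iff \lim_{k\to\infty} d^{*}(\theta^k(a), \theta^k(b)) = 0$ (so that $\theta'$ is the substitution induced on $\mathcal{A}/\!\sim$).

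The first step is to verify that $c(\theta) = c(\theta')$ and that the quotient map $M \mapsto [M]$ sends $\mathcal{X}(\theta)$ into $\mathcal{X}(\theta')$. The key observation is that elements of any minimal set $M \in \mathcal{X}(\theta)$ are pairwise $\sim$-inequivalent: if $a, b \in M$ and $a \sim b$, then Lemma~\ref{le_reduce_f} applied with $k=0$ forces $f(a) = f(b)$, contradicting the bijectivity of $f$ on $M$. Hence $|[M]| = |M| = c(\theta)$, which already gives $c(\theta') \le c(\theta)$. For the reverse inequality, for any column $C = \theta^K(\mathcal{A})_J$ of $\theta$ the map $\theta^{k_0}_{j_0}\colon C \to M_0$ is surjective (since $\theta^{k_0}_{j_0}\colon \mathcal{A} \to M_0$ already is) and it respects $\sim$, so it descends to a surjection $[C] \twoheadrightarrow [M_0]$; thus $|[C]| \ge c(\theta)$, i.e.\ $c(\theta') \ge c(\theta)$.

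The next step is to fix compatible data: take the same $k_0, j_0$ for both substitutions, set $M_0' := [M_0]$ and $f_0'([a]) := f_0(a)$ for $a \in M_0$ (well-defined by the non-equivalence just established). The resulting $f'$ satisfies $f'([a]) = f(a)$ for every $a \in \mathcal{A}$. A direct unwinding using \eqref{eq:sigma_Mj} for both $\theta$ and $\theta'$, together with the identity $\theta'([a])_j = [\theta(a)_j]$, then yields
$\sigma_{M,j}^{\theta} = \sigma_{[M],j}^{\theta'}$
for every $M \in \mathcal{X}(\theta)$ and $0 \le j \le r-1$, which gives the inclusion $G(\theta) \subseteq G(\theta')$.

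The main obstacle is the reverse inclusion: one must show that every $M' \in \mathcal{X}(\theta')$ has the form $[M]$ for some $M \in \mathcal{X}(\theta)$, so that each generator $\sigma_{M',j}^{\theta'}$ can be matched to a generator of $G(\theta)$. For this I would invoke primitivity of $\tilde{\theta'}$ on $\mathcal{X}(\theta')$ (which holds by the same argument cited for $\tilde{\theta}$, namely \cite[Proposition 6.3]{L-M-2018}): starting from $[M_0] \in \mathcal{X}(\theta')$, any $M' \in \mathcal{X}(\theta')$ is reached in finitely many $\tilde{\theta'}$-steps, and since $\tilde{\theta'}^{n}([M_0])_j = [\tilde{\theta}^n(M_0)_j]$ with $\tilde{\theta}^n(M_0)_j \in \mathcal{X}(\theta)$, every such $M'$ is indeed of the form $[M]$. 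Combining this with the identity established in the previous paragraph yields $G(\theta') \subseteq G(\theta)$ and completes the proof.
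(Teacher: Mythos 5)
Your proposal is correct and follows essentially the same route as the paper's proof: establish $c(\theta)=c(\theta')$, check that $f$ descends to $\mathcal{A}'$ so that the generators match under $M\mapsto[M]$, and show that every minimal set of $\theta'$ has the form $[M]$ for some $M\in\mathcal{X}(\theta)$ (the paper gets this last point from the explicit column identity $\theta^{k_0+k'}(\mathcal{A})_{j_0r^{k'}+j'}=\theta^{k'}(M_0)_{j'}$ together with a cardinality count, whereas you invoke primitivity of the induced substitution on $\mathcal{X}(\theta')$; both amount to the same cardinality argument). One small patch: your parenthetical justification that $\theta^{k_0}_{j_0}|_{C}$ is onto $M_0$ ``since $\theta^{k_0}_{j_0}\colon\mathcal{A}\to M_0$ already is'' is not valid as stated, because a restriction of a surjection need not be surjective; the correct reason is that $\theta^{k_0}_{j_0}(C)=\theta^{K+k_0}(\mathcal{A})_{Jr^{k_0}+j_0}$ is itself a column contained in $M_0$, hence has cardinality at least $c=|M_0|$ and therefore equals $M_0$.
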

\begin{proof}
	 Let $\mathcal A'$ be the alphabet on which $\theta'$ is defined. By Lemma~\ref{le_reduce_f}  we can define $f$ on $\mathcal{A}'$ as in particular it says that   if $a \sim b$ then $f(a) = f(b)$, so we can simply define $f([a]) := f(a)$. This shows that $|\theta'^k([M])_j| = c(\theta)$ for all $k$ and  $j<r^k$ which means that $c(\theta') \geq c(\theta)$. As $\theta'$ is constructed by identifying letters of $\mathcal{A}$ we have $c(\theta) \geq c(\theta')$ and so $c(\theta) = c(\theta')$.
	It suffices to show that
	\begin{align}\label{eq_G_theta'}
		\{\sigma_{M,j}: M \in \mathcal{X}(\theta), j<r\} = \{\sigma'_{M,j}: M \in \mathcal{X}(\theta'), j<r\}.
	\end{align}
	First we find directly that for all $M \in \mathcal{X}(\theta)$ we have $[M] \in \mathcal{X}(\theta')$ and as $f([a]) = f(a)$ we know that
	\begin{align*}
		\{\sigma_{M,j}: M \in \mathcal{X}(\theta), j<r\} \subseteq \{\sigma'_{M,j}: M \in \mathcal{X}(\theta'), j<r\}.
	\end{align*}
	Furthermore, for any $M' \in \mathcal{X}(\theta')$ there exist $k'$ and  $j' < r^{k'}$ such that $\theta'^{k'}(\mathcal{A}')_{j'} = M'$.
	We recall that as in (\ref{definition-of-M_0}), we have $M_0 = \theta^{k_0}(\mathcal{A})_{j_0}$ and, therefore, we find
	\begin{align*}
		\mathcal{X}(\theta) \ni \theta^{k_0 + k'}(\mathcal{A})_{j_0 r^{k'} + j'} = \theta^{k'}(M_0)_{j'} \subseteq M'
	\end{align*}
	Thus, we can write $M' = [M]$ for some $M \in \mathcal{X}(\theta)$, which shows that
	\begin{align*}
		\{\sigma_{M,j}: M \in \mathcal{X}(\theta), j<r\} \supseteq \{\sigma'_{M,j}: M \in \mathcal{X}(\theta'), j<r\}
	\end{align*}
	and in total~\eqref{eq_G_theta'}, which finishes the proof.
\end{proof}
Recall that if $\theta$ is primitive, then there is a unique probability measure $\mu$ such that $(X_\theta,\sigma,\mu)$ is a measure preserving dynamical system. The {\em essential centraliser} of 
$(X_\theta,\sigma,\mu)$ is the set of all measure-preserving maps of $X_\theta$ which commute with $\sigma$, quotiented out by $\{ \sigma^n: n\in \Z\}$. Combining Corollary \ref{automorphism=centraliser} and Lemma \ref{centralisers-are-equal}, we deduce the following.
\begin{thm}\label{best-result-yet}
	Let $\theta$ be a primitive length $r$ substitution. Then $G(\theta)$ corresponds to  those elements of the essential centraliser of  $(X_\theta,\sigma,\mu)$  that are a root of the identity.

\end{thm}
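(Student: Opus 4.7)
The theorem will follow by assembling three ingredients already in hand: the Host--Parreau transfer between the measurable and topological settings discussed in the remark after Proposition~\ref{prop:last_nail_lemma}, the radius-zero characterisation Theorem~\ref{characterisation-of-kernel} together with its converse Corollary~\ref{automorphism=centraliser}, and the stability Lemma~\ref{centralisers-are-equal}. My plan is to deploy these in that order.

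The first step is to reduce to the reduced case. Let $\theta'$ denote the reduction of $\theta$; because reduction identifies pairs $a\sim b$ with $\lim_k d^{*}(\theta^k(a),\theta^k(b))=0$, the measure-preserving systems $(X_\theta,\sigma,\mu)$ and $(X_{\theta'},\sigma,\mu')$ are measurably isomorphic, so their essential centralisers coincide. Now invoking Host--Parreau: any element of the essential centraliser of $(X_{\theta'},\sigma,\mu')$ that is a root of the identity necessarily has $\kappa$-value zero and is almost everywhere equal to a topological automorphism of $(X_{\theta'},\sigma)$ of radius at most one satisfying~\eqref{eq:commuting-relation-2}. Since $\theta'$ is reduced, hence strongly injective, Theorem~\ref{radius0} upgrades this to radius zero, which puts us squarely in the setting of Theorem~\ref{characterisation-of-kernel}.

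At this point Theorem~\ref{characterisation-of-kernel} and Corollary~\ref{automorphism=centraliser} together give, for $\theta'$ reduced, a bijection between radius-zero topological automorphisms of $(X_{\theta'},\sigma)$ and the centraliser of $G(\theta')$ in the symmetric group on $\{1,\ldots,c\}$, with the correspondence implemented by $\tau \mapsto \tau'$ via the map $f$ of~\eqref{definition-of-f}. Lemma~\ref{centralisers-are-equal} then provides $G(\theta)=G(\theta')$ (and also confirms $c(\theta)=c(\theta')$), so the correspondence can be expressed intrinsically through $G(\theta)$. Chaining these bijections gives the desired identification between the roots of identity in the essential centraliser of $(X_\theta,\sigma,\mu)$ and the data encoded by $G(\theta)$.

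The main obstacle I foresee is purely bookkeeping: one must verify that the map $\tau\mapsto\tau'$ of Theorem~\ref{characterisation-of-kernel} descends cleanly to a well-defined bijection on the essential centraliser, which is implicitly the quotient of the measurable centraliser by $\langle\sigma\rangle$, and one must handle the Host--Parreau hypothesis that the column number is at least two (either by a direct argument or by treating $c=1$ as a degenerate case where the statement is trivial since $G(\theta)$ is then trivial). Once these verifications are in place, the theorem is a short formal corollary of the assembled lemmas.
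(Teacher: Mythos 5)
Your proposal follows essentially the same route as the paper, which derives the theorem in one line by combining Corollary~\ref{automorphism=centraliser} with Lemma~\ref{centralisers-are-equal}, using the Host--Parreau remark and the measure-theoretic invariance under reduction exactly as you do. Your additional care about the quotient by $\langle\sigma\rangle$ and the column-number hypothesis of Host--Parreau fills in details the paper leaves implicit, but introduces no new ideas beyond the paper's argument.
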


\section{Rational orders in an automorphism group, twisted substitutions and  $k$-compressions. }
Theorem~\ref{strongly-injective-rep} tells us that we can assume that our substitution is strongly injective, and henceforth we do this.  Theorem~\ref{radius0} tells us that all automorphisms whose $\kappa$-value is zero arise from alphabet permutations. 
In this section we investigate the structure of substitutions $\theta$ which possess other automorphisms, namely, automorphisms with a non-integer $\kappa$-value. We show in Theorem \ref{roots} that these automorphisms indicate that $\theta$ has a structure coming from some other constant length substitution.

Given a sequence $(x_n)_{n\in \Z}\in \mathcal A^{\Z}$, we can consider what Cobham \cite{Cobham} calls its {\em $k$-compression},  which is the sequence  $(y_n)_{n\in \Z}\in ({\mathcal A^{k}})^{\Z}$ where $y_n := x_{nk} \ldots x_{(n+1)k-1}$. 
The operation of $k$-compression also applies to words whose length is a multiple of $k$. If $\theta:\mathcal A\rightarrow\mathcal A^{r}$, and $\alpha_1\ldots \alpha_k\in \mathcal A^{k}$, then the $k$-compression of the word $\theta( \alpha_1\ldots \alpha_k  )$ is a word in $\mathcal A^{k}$ of length $r$; this defines a substitution $\theta^{(k)}: \mathcal B\rightarrow \mathcal B^{r}$, where $\mathcal B\subset   \mathcal A^{k}$
is the set of words      $\mathcal B = \{u[mk, (m+1)k-1]: m \geq 0\}$ where    $(u_n)_{n\in \N}$ is a fixed point of $\theta$.
Cobham  studied the notion of $k$-compression and showed that if $x$ is $r$-automatic and primitive, then for any $k$, its $k$-compression is also $r$-automatic and primitive, generated by the substitution $\theta^{(k)}$. We call $\theta^{(k)}$ (resp. $(X_{\theta^{(k)}}, \sigma)$) the {\em $k$-compression of $\theta$ }  (resp. $(X_\theta,\sigma) $). It is straightforward to see that $(X_\theta,\sigma^{k})$ is topologically conjugate to $(X_{\theta^{(k)}},\sigma)$.

\begin{definition}\label{twist-def}
Let $\theta:\mathcal A \rightarrow \mathcal A^{r}$ be a substitution and let $\tau \in \Aut(X_\theta, \sigma)$ with radius $0$ satisfying $\kappa(\tau)=0$ and   $\tau^{r}=\tau$.
Define the substitution  $\theta_\tau:\mathcal A\rightarrow \mathcal A^{r}$ by 
\[\theta_\tau(\alpha)_j =   \tau^j(\theta(\alpha)_j).\]
We call $(X_{\theta_\tau},\sigma)$ the  {\em $\tau$-twist of $(X_\theta,\sigma)$}. 

\end{definition}

Recall $\kappa_\theta: \Aut(X_\theta,\sigma)\rightarrow \Z_r$,  the group homomorphism defined in Theorem \ref{thm:ethan}. By \cite[Theorem 3.16 and Lemma 3.2]{coven-quas-yassawi},  $\kappa (\Aut(X_{\theta}, \sigma)) $ is always cyclic and generated by $1/k$ for some $k$.

\begin{thm}\label{roots}
Let  $\theta$ be a strongly injective primitive length $r$ substitution  of height one
such that $X_\theta$ is infinite. 
If 
$\kappa_\theta (\Aut(X_{\theta}, \sigma)) = \langle \frac{1}{k} \rangle$ then   $(X_\theta,\sigma)$ is topologically conjugate to  a twist of a  $k$-compression of  $(X_\eta,\sigma)$, with $\eta$ a primitive length $r$ substitution.

\end{thm}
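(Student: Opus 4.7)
The plan is to extract from the hypothesis an automorphism $\Phi$ with $\kappa(\Phi)=1/k$, write $\Phi^k=\tau\sigma$ for a letter-exchanging map $\tau$, then build $\eta$ by encoding the $\Phi$-orbit, and finally verify that $\theta$ arises as a $\tau$-twisted $k$-compression of $\eta$.

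First, I would pick $\Phi\in\Aut(X_\theta,\sigma)$ with $\kappa(\Phi)=1/k$. Since $\kappa(\Phi^k\sigma^{-1})=0$ and $\theta$ is strongly injective, Theorem~\ref{radius0} implies $\Phi^k\sigma^{-1}$ has radius zero, so $\Phi^k=\tau\sigma$ for some letter-exchanging $\tau\in\Aut(X_\theta,\sigma)$. By Proposition~\ref{prop:radius_two} I may assume $\Phi$ has right radius $0$ and left radius at most $1$; the identity $\Phi\cdot\Phi^k=\Phi^k\cdot\Phi$ together with $\Phi\sigma=\sigma\Phi$ then forces $\Phi\tau=\tau\Phi$.

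Second, I would encode the $\Phi$-orbit: define $\psi\colon X_\theta\to\mathcal A^\Z$ by $\psi(x)_n=\Phi^n(x)_0$ and set $Y:=\psi(X_\theta)$. The relation $\Phi^k=\tau\sigma$ yields $\psi(x)_{mk}=\tau^m(x_m)$, which shows that $\psi$ is injective and conjugates $\Phi$ with the shift on $Y$. Combined with $\Phi\tau=\tau\Phi$, one obtains $\psi(x)_{n+k}=\tau(\psi(\sigma x)_n)$, so reading $\psi(x)$ in aligned blocks of length $k$ and undoing the $\tau^m$ twist recovers $x$. This expresses precisely that $(X_\theta,\sigma)$ is the $\tau$-twisted $k$-compression of $(Y,\sigma)$.

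Third, I would verify that $(Y,\sigma)$ is a primitive constant length-$r$ substitution shift $(X_\eta,\sigma)$. The substitution rule $\eta$ should arise by transporting $\theta$ along $\psi$: the commutation relation~\eqref{eq:commuting-relation} between $\Phi$ and $\theta^{c!p}$, combined with $\Phi^k=\tau\sigma$, transfers the length-$r$ block structure of $\theta$ to $Y$; primitivity is inherited from that of $\theta$, and the twist condition $\tau^r=\tau$ on the compressed alphabet follows by comparing $\Phi^{kr}=\tau^r\sigma^r$ with $\theta\sigma\theta^{-1}=\sigma^r$.

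The main obstacle is this third step: pinning down $\eta$ explicitly and verifying that the extracted substitution is genuinely of constant length $r$ and primitive. The delicate point is that while $\Phi$ topologically refines $\sigma$ and $\psi$ encodes this refinement, one must check that the encoding respects the length-$r$ block structure of $\theta$, possibly after passing to an $\ell$-sliding block representation via Lemma~\ref{sliding-block-rep} to obtain a letter-to-letter coding. Here recognizability of $\theta$ together with the sharp radius control from Proposition~\ref{prop:radius_two} should be the decisive tools.
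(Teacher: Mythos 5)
Your overall architecture matches the paper's: encode the $\Phi$-orbit via $\psi(x)_n=(\Phi^n x)_0$, use $\Phi^k=\tau\circ\sigma$ with $\tau$ of radius zero to exhibit the twisted-compression structure, and transport $\theta$ along $\psi$ to obtain $\eta$. The gap sits exactly at the step you flag as ``the main obstacle,'' and the tools you propose there (recognizability plus the radius bound of Proposition~\ref{prop:radius_two}) do not close it. To make $\eta:=\psi\circ\theta\circ\psi^{-1}$ a constant length $r$ substitution one needs the exact intertwining $\Phi^r\circ\theta=\theta\circ\Phi$, since that is what yields $\eta\circ\sigma=\sigma^r\circ\eta$; but Equation~\eqref{eq:commuting-relation} only gives $\Phi\circ\theta^{c!p}=\sigma^{N}\circ\theta^{c!p}\circ\Phi$ with a nontrivial shift offset $\sigma^N$, and an arbitrary $\Phi$ with $\kappa(\Phi)=\frac1k$ need not satisfy the clean relation. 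The paper's Lemma~\ref{roots-one} arranges it by observing that $\Phi\mapsto\theta^{-1}\circ\Phi^{r}\circ\theta$ maps the finite set $\kappa^{-1}(\frac1k)$ to itself (finite because $\kappa$ is at most $c$-to-one), extracting a periodic point of this map, and replacing $\theta$ by the corresponding power. This normalization is a genuinely missing idea in your write-up, and it also underlies the identity $\tau^{r}=\tau$: the paper derives $\sigma^{N}\circ\Phi=\Phi^{r}$ with $Nk+1=r$ and compares $\kappa$-values, whereas your ``comparison of $\Phi^{kr}=\tau^{r}\sigma^{r}$ with $\sigma^{r}$'' does not by itself yield $\tau^{r}=\tau$. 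Without that identity the twist of Definition~\ref{twist-def} is not even defined.

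A second, smaller omission: the twist must be performed by a \emph{radius-zero} automorphism of the compression $X_{\eta^{(k)}}$, and radius zero is not automatic for the automorphism $\bar\tau$ induced on the compressed alphabet. The paper earns this by proving that $\eta$ is strongly injective --- counting the right- and left-infinite fixed points of $\eta$ against the column number, using that $\Phi$ has left radius $0$ and right radius at most $1$ while $\Phi^{-1}$ has the mirrored radii --- and then invoking Theorem~\ref{radius0}. You would need some version of this argument before you can conclude that what you have produced is a twist in the sense of Definition~\ref{twist-def}.
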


It is often the case that  $\tau=\Id$, and in this case one can conclude that $(X_\theta, \sigma)$ is topologically conjugate to a $k$-compression.
However, situations where $\tau$ is nontrivial do occur; see 
Example \ref{twisted}.

To prove Theorem \ref{roots} we prove some  preliminary lemmas.

\begin{lem}\label{roots-one}
Let  $\theta$ be a primitive length $r$ substitution  of height one
such that $X_\theta$ is infinite. 
If 
$ \frac{1}{k}\in \kappa(\Aut(X_\theta,\sigma))$ then, by taking a power of $\theta$ if necessary,  there is an automorphism $\Phi$ with $\kappa(\Phi) = \frac{1}{k}$ satisfying
\begin{equation}\label{eq:thetaPhicomm}
\Phi^r\circ\theta=\theta\circ\Phi ,
\end{equation}
and an 
 automorphism $\tau\in \Aut(X_\theta,\sigma)$ such that $\tau^r=\tau$ and 
 \begin{equation}\label{eq:Phi-tau-link}
 \Phi^{k}= \sigma \circ \tau.
\end{equation}
\end{lem}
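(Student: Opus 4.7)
My plan is to produce $\Phi$ satisfying~\eqref{eq:thetaPhicomm} by a finiteness argument on the coset $\kappa^{-1}(1/k)$, then set $\tau:=\sigma^{-1}\Phi^k$ and use~\eqref{eq:thetaPhicomm} to show that the order $d$ of $\tau$ is coprime to $r$, and finally pass to a further power of $\theta$ to obtain $\tau^r=\tau$.

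By Theorem~\ref{thm:injective} I may assume $\theta$ is injective and hence recognizable. Fix any $\Phi_0\in\Aut(X_\theta,\sigma)$ with $\kappa(\Phi_0)=1/k$. Since $1/k\in\Z_r$ has first $r$-adic digit $k^{-1}\!\!\mod r$, the element $\Phi_0^r$ shifts the maximal equicontinuous factor by $r/k$, whose first digit vanishes; hence $\Phi_0^r$ preserves the partition piece $\theta(X_\theta)$. Recognizability makes $\theta\colon X_\theta\to\theta(X_\theta)$ a homeomorphism, so
\[
\mathcal T(\Phi):=\theta^{-1}\circ\Phi^r\circ\theta
\]
is well-defined, and a direct calculation using $\theta\sigma=\sigma^r\theta$ yields $\mathcal T(\Phi)\in\Aut(X_\theta,\sigma)$ with $\kappa(\mathcal T(\Phi))=1/k$. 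The coset $\kappa^{-1}(1/k)=\Phi_0\cdot\ker\kappa$ is finite, because Proposition~\ref{prop:radius_two} forces every element of $\ker\kappa$ to have radius at most one and so to be determined by some map $\mathcal A^3\to\mathcal A$. Thus $\mathcal T$ is a self-map of a finite set and admits a periodic point $\Phi$ with $\mathcal T^T(\Phi)=\Phi$ for some $T\geq 1$, which unwinds to $\Phi^{r^T}\theta^T=\theta^T\Phi$. Replacing $\theta$ by $\theta^T$ (and renaming $r^T$ as $r$) converts this into~\eqref{eq:thetaPhicomm}.

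Now set $\tau:=\sigma^{-1}\Phi^k$, so~\eqref{eq:Phi-tau-link} holds by construction and $\kappa(\tau)=0$; hence $\tau\in\ker\kappa$, with some finite order $d$. Iterating~\eqref{eq:thetaPhicomm} gives $\Phi^{rn}\theta=\theta\Phi^n$ for all $n\geq 1$. Taking $n=k$ and substituting $\Phi^k=\sigma\tau$ together with $\theta\sigma=\sigma^r\theta$ reduces the relation to $\tau^r\theta=\theta\tau$, and a further induction yields $\tau^{r^n}\theta^n=\theta^n\tau$ for every $n\geq 1$. Since $\theta^n$ intertwines $\sigma$ with $\sigma^{r^n}$, it conjugates the system $(X_\theta,\sigma,\tau)$ with $(\theta^n(X_\theta),\sigma^{r^n},\tau^{r^n}|_{\theta^n(X_\theta)})$; because $\tau^{r^n}$ commutes with $\sigma$, its order in $\Aut(X_\theta,\sigma)$ equals that of its restriction to one cyclic partition piece, which is $d$. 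On the other hand, in $\langle\tau\rangle$ this order is $d/\gcd(d,r^n)$, forcing $\gcd(d,r^n)=1$ for every $n$ and hence $\gcd(d,r)=1$.

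Finally, choose $m$ with $r^m\equiv 1\pmod d$ and replace $\theta$ by $\theta^m$; iterating~\eqref{eq:thetaPhicomm} shows it survives this substitution, while $d\mid r^m-1$ yields $\tau^{r^m}=\tau$, which in the new variables is exactly $\tau^r=\tau$. The main obstacle is the order calculation giving $\gcd(d,r)=1$: there is no a priori reason for the radius-at-most-one kernel to avoid $r$-torsion (as shown by the Thue--Morse permutation when $k=1$), and what pins the order down when $k>1$ is the iterated conjugation $\tau^{r^n}\theta^n=\theta^n\tau$, which forces $\tau^{r^n}$ to have the same order as $\tau$ for every $n$.
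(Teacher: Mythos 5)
Your proof is correct. The first half --- producing $\Phi$ satisfying~\eqref{eq:thetaPhicomm} by observing that $\theta^{-1}\circ\Phi^r\circ\theta$ is a well-defined self-map of the finite coset $\kappa^{-1}(1/k)$ and extracting a periodic point --- is exactly the paper's argument (the paper treats the case of injective $\kappa$ separately, but your pigeonhole subsumes it; your explicit justification of finiteness via Proposition~\ref{prop:radius_two} is a detail the paper leaves implicit). Where you genuinely diverge is in proving $\tau^r=\tau$. The paper passes to $\theta^{c!p}$ so that the commutation relation~\eqref{eq:commuting-relation} applies, combines it with~\eqref{eq:thetaPhicomm} to get $\sigma^N\Phi=\Phi^r$, and then compares $\kappa$-values in $\sigma^r\tau^r=\Phi^{kr}=\sigma^{Nk+1}\tau$ to conclude $Nk+1=r$ and hence $\tau^r=\tau$, with no further power of $\theta$ needed. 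You instead derive the intertwining $\tau^r\theta=\theta\tau$ purely from~\eqref{eq:thetaPhicomm} and~\eqref{eq:Phi-tau-link}, iterate it to $\tau^{r^n}\theta^n=\theta^n\tau$, and use the resulting conjugacy --- together with the correct observation that an automorphism commuting with $\sigma$ which is the identity on the clopen piece $\theta^n(X_\theta)$ is the identity everywhere --- to force $\gcd(\operatorname{ord}(\tau),r)=1$, after which one extra power $\theta^m$ with $r^m\equiv 1\pmod{\operatorname{ord}(\tau)}$ finishes. Both routes are sound: the paper's is shorter and keeps the power of $\theta$ already chosen, but leans on the precise constants in~\eqref{eq:commuting-relation}; yours is self-contained modulo finiteness of $\ker\kappa$ and isolates the group-theoretic mechanism (the order of $\tau$ is prime to $r$). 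Your opening reduction to an injective $\theta$ via Theorem~\ref{thm:injective} is harmless, since the conjugacy there intertwines the two substitution maps so the relations transfer back, and in the intended application (Theorem~\ref{roots}) the substitution is already strongly injective; note that the paper's proof also tacitly needs injectivity to invoke Proposition~\ref{prop:radius_two}.
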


\begin{proof}
Let $\Phi $ be any automorphism with $\kappa(\Phi)=\frac{1}{k}$. Then we have
 \[\pi(\Phi^r\circ \theta(x))= \frac{r}{k}+\pi(\theta(x))=r\left(\frac{1}{k}+\pi(x)\right),\] and since  $\frac{1}{k} \in \Z_r$, this implies that $\Phi^{-1}\circ\theta^{-1}\circ\Phi^r\circ\theta:X_\theta\rightarrow X_\theta$ is a well defined bijection which also commutes with $\sigma$, i.e. is an automorphism. We  also have
 $\kappa(\Phi^{-1}\circ\theta^{-1}\circ\Phi^r\circ\theta)= 0$.      We claim, by taking a power of $\theta$ if necessary,  that there exists an automorphism $\Phi$ such that $\kappa(\Phi)=\frac{1}{k}$ and Equation \eqref{eq:thetaPhicomm} holds.

If   $\kappa$ is injective, then $\kappa(\Phi^{-1}\circ\theta^{-1}\circ\Phi^r\circ\theta)= 0$ implies that $\Phi^{-1}\circ\theta^{-1}\circ\Phi^r\circ\theta=\Id$ and we are done.
If $\kappa$ is not injective, consider the map $f: \kappa^{-1}(\frac{1}{k})\rightarrow \kappa^{-1}(\frac{1}{k})$ given by    $\Phi \mapsto \theta^{-1}\Phi^r\theta$. For some $n\geq 1$, and for some $\Phi$, we have $f^{n}(\Phi)=\Phi$. One verifies that in this case $\Phi$ satisfies $\theta^n\Phi= \Phi^{r^n} \theta^n$. As $X_{\theta^n}=X_\theta$, we can replace $\theta$ by $\theta^n$ if necessary, to assume that we started with a $\theta$ for which there exists an automorphism $\Phi$ satisfying \eqref{eq:thetaPhicomm}. 

For this $\Phi$ satisfying $ \eqref{eq:thetaPhicomm}   $, we have that $\Phi^{k}= \sigma \circ \tau$ for some $\tau\in \ker{\kappa}$. It remains to show that $\tau^r=\tau$.
Considering the power $\theta^{c!p}$ instead of $\theta$, we have by Equation \eqref{eq:commuting-relation} that 
$ \sigma^N\circ 
\Phi\circ \theta =\theta\circ \Phi$ for some integer $N$. Using Equation \eqref{eq:thetaPhicomm}, we have $ \sigma^N\circ 
\Phi\circ \theta =\Phi^r\circ \theta$, so that $\sigma^N\Phi= \Phi^{r}$. Using Equation \eqref{eq:Phi-tau-link}, we have
\[ \sigma^r\tau^r = \Phi^{kr} =\sigma^{Nk}\Phi^k = \sigma^{Nk+1}\tau . \]
Both sides being automorphisms with necessarily equal $\kappa$-values, this implies that $Nk+1=r$, and the result follows.

\end{proof}

\begin{lem}\label{roots-two}

Let  $\theta$ be a strongly injective primitive length $r$ substitution  of height one
such that $X_\theta$ is infinite. 
If  $\Phi\in (X_\theta, \sigma)$  satisfies \begin{equation*}
\kappa(\Phi) = \frac{1}{k},\,\,\,\,
\Phi^r\circ\theta=\theta\circ\Phi  \mbox{ and }  \Phi^{k}= \sigma \circ \tau
\end{equation*}
 for some automorphism $\tau\in \Aut(X_\theta,\sigma)$ such that $\tau^r=\tau$,
then there exists a substitution $\eta$ such that $(X_\theta,\Phi)$ is topologically conjugate to  $(X_\eta, \sigma)$.

\end{lem}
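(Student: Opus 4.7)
The plan is to exhibit an explicit topological conjugacy $\pi\colon (X_\theta,\Phi)\to (X_\eta,\sigma)$ by recording each $x\in X_\theta$ through its $\Phi$-orbit trace at coordinate $0$, and then to convert the resulting shift to a genuine substitution shift on a suitably enlarged alphabet. Concretely, I will set $\pi(x)_n:=(\Phi^n x)_0$; this is continuous in the product topology and satisfies $\pi\circ\Phi=\sigma\circ\pi$. For injectivity, left-multiplying $\Phi^k=\sigma\tau$ by $\Phi$ and using $\Phi\sigma=\sigma\Phi$ gives $\Phi\tau=\tau\Phi$, so $\sigma^j=\Phi^{kj}\tau^{-j}$ for every $j\in\Z$. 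Since $\kappa(\tau)=0$ and $\theta$ is strongly injective, Theorem~\ref{radius0} says $\tau$ acts as a letter permutation, and hence
\[
x_j=(\sigma^j x)_0=\tau^{-j}\bigl(\pi(x)_{kj}\bigr),
\]
which recovers $x$ from $\pi(x)$. Thus $\pi$ is a homeomorphism onto $Y:=\pi(X_\theta)$ and $(X_\theta,\Phi)\cong (Y,\sigma)$.

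Next I will analyse the conjugated map $\bar\theta:=\pi\circ\theta\circ\pi^{-1}\colon Y\to Y$. The relation $\Phi^r\theta=\theta\Phi$ iterates to $\Phi^{rj}\theta=\theta\Phi^j$, so for $n=rj+s$ with $0\le s<r$,
\[
\bar\theta(y)_{rj+s}=(\Phi^s\theta\Phi^jx)_0,\qquad x:=\pi^{-1}(y).
\]
Since by Proposition~\ref{prop:radius_two} $\Phi^s$ has left radius at most $s<r$ and right radius $0$, this value depends only on $(\theta\Phi^jx)_{-s},\ldots,(\theta\Phi^jx)_0$, which in turn is a function of $(\Phi^jx)_{-1}$ and $(\Phi^jx)_0=y_j$. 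Using $\sigma^{-1}=\tau\Phi^{-k}$ I compute $(\Phi^jx)_{-1}=\tau(\pi(x)_{j-k})=\tau(y_{j-k})$. Consequently $\bar\theta(y)_{rj+s}$ is given by a fixed function $F_s(y_j,y_{j-k})$ determined by $\theta$, $\tau$, and the local rule of $\Phi$.

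The dependence of $\bar\theta$ on the two positions $j$ and $j-k$ prevents it from being a pointwise substitution on $\mathcal A$, so in the final step I will pass to the $L$-sliding block representation $\hat Y\subseteq\mathcal B^\Z$ of $(Y,\sigma)$, with the window size $L$ chosen large enough that each block $\hat y_j:=(y_{j-L+1},\ldots,y_j)$ contains $y_{j-k}$ together with the additional positions needed to determine the next $r$ windows of $\bar\theta(y)$. The formulae $F_s$ then lift to a genuine length-$r$ substitution $\eta\colon\mathcal B\to\mathcal B^r$ whose shift $X_\eta$ equals $\hat Y$, yielding the composite conjugacy $(X_\theta,\Phi)\cong (Y,\sigma)\cong (X_\eta,\sigma)$. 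The main obstacle is this final bookkeeping: one must pick $L$ large enough to absorb both the lookback of $k$ in $\bar\theta$ and the $r$-fold inflation (a straightforward but careful count gives any $L\ge (k+1)r/(r-1)$), and then verify that $\eta$ is well defined and generates exactly $\hat Y$. Philosophically this last step is the reverse engineering of the $\tau$-twisted $k$-compression that will appear in Theorem~\ref{roots}.
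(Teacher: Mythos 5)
Your construction of $\pi$, the intertwining $\pi\circ\Phi=\sigma\circ\pi$, and the injectivity argument via $x_j=\tau^{-j}(\pi(x)_{kj})$ all match the paper's proof. The problem is your radius claim, and it is not a harmless slip: since $\kappa(\Phi)=\frac{1}{k}$ is a \emph{positive} rational (whereas Proposition~\ref{prop:radius_two} as stated treats $\kappa(\Phi)=k/(1-r^p)$, which is negative), the correct analogue gives $\Phi$ \emph{left} radius $0$ and \emph{right} radius at most $1$ --- the paper invokes exactly this --- so $\Phi^s$ has left radius $0$ and right radius at most $s$. Consequently $(\Phi^s\theta\Phi^jx)_0$ depends on $(\theta\Phi^jx)_0,\ldots,(\theta\Phi^jx)_s$, which for $0\le s\le r-1$ all lie inside the single block $\theta\bigl((\Phi^jx)_0\bigr)$ and hence are determined by $y_j$ alone. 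That is the whole point of the lemma: $\eta:=\pi\circ\theta\circ\pi^{-1}$ is \emph{already} a length-$r$ substitution on the trace alphabet, with no lookback to $y_{j-k}$. Your claimed dependence on $(\theta\Phi^jx)_{-s},\ldots,(\theta\Phi^jx)_0$, and hence on the pair $(y_j,y_{j-k})$, is deduced from the flipped radii, and the entire final paragraph (the window size $L$, the block alphabet $\mathcal B$, the lift of the maps $F_s$) is machinery built to repair a problem that does not exist.

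Even granting your $F_s$, the last step is only a sketch where real work remains: you still owe the verification that the recoded shift is minimal and is the orbit closure of an $\eta$-periodic point, i.e.\ that it genuinely equals $X_\eta$ for the substitution you build. The paper gets this cheaply by observing that $\pi$ carries $\theta$-fixed points to $\eta$-fixed points and that $(\bar X,\sigma)$ is minimal (deduced from minimality of $(X_\theta,\sigma\circ\tau)$). I would rewrite your second half as follows: derive $\eta\circ\sigma=\sigma^r\circ\eta$ from $\Phi^r\circ\theta=\theta\circ\Phi$ and $\pi\circ\Phi=\sigma\circ\pi$, use the correct radii to show that $\eta(z)_i$ for $0\le i<r$ depends only on $z_0$, and close with the fixed-point and minimality observations; no sliding-block recoding is needed.
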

\begin{proof}
By Theorem \ref{radius0}, $\tau$ has radius zero. If $\theta$ is defined on the alphabet $\mathcal A$, define a map $\pi \colon X_{\theta} \to \mathcal A^\Z$ by
$\pi(x)_n=(\Phi^{n} x)_0$. As we assume that $k>1$,  
then by an analogue of Proposition \ref{prop:radius_two}, $\Phi$ has left radius 0 and right radius at most 1. We notice that 
\begin{equation}\label{eq:piPhicomm}
\pi\circ\Phi=\sigma\circ\pi,\end{equation}
where we use $\sigma$ to denote the shift on either of our spaces. Let $\bar X:=\pi(X_{\theta})$;   then $\bar X$ is closed and shift invariant. We have
\[           \pi(x)_{nk}= \left(     \Phi^{nk}(x)   \right)_{0}  = (\sigma^n\tau^n(x))_0 = (\tau^n(x))_n = \tau^n(x_n),  \]
and since $\tau$ must be injective on letters, this implies that $\pi$ is injective.

Since  $\pi$ conjugates $(X_\theta,\Phi)$ to $(\bar X,\sigma)$, and $\Phi^k = \sigma\circ \tau$, then $\pi$ conjugates $(X_\theta,\sigma\circ \tau)$ to
$(\bar X,\sigma^k)$. Since $(X_\theta, \sigma)$ is minimal and $\tau$ is an automorphism, then $(X_\theta,\sigma\circ \tau)$ is minimal and so $(\bar X,\sigma^k)$ is minimal; hence 
$(\bar X,\sigma)$ is also minimal. 

We define a map on $\bar X$ by $\eta=\pi\circ   \theta   \circ\pi^{-1}$.
We have
\begin{equation}\label{eq:thetacomm}
\eta\circ \sigma=\pi\circ \theta\circ \pi^{-1}\circ \sigma
=\pi\circ\theta\circ\Phi \circ\pi^{-1}
=\pi\circ\Phi^{ r}\circ\theta\circ\pi^{-1}=\sigma^r\circ\eta,
\end{equation}
where the second, third and fourth equalities are by
\eqref{eq:piPhicomm}, \eqref{eq:thetaPhicomm}   
 and \eqref{eq:piPhicomm} 
respectively.

We claim that $\eta$ is a length $r$ substitution. 
By \eqref{eq:thetacomm}, it suffices to check that if $z,z'\in \bar X$
satisfy $z_0=z_0'$, then $\eta(z)_i=\eta(z')_i$
for $i=0,\ldots,r-1$. Suppose then that $z$ and $z'\in\bar X$ satisfy
$z_0=z'_0$. Let $x=\pi^{-1}z$ and $x'=\pi^{-1}z'$.
Notice that if $z=\pi(x)$, then $z_0= x_0$, so that
$x_0=x'_0$. Hence $\theta(x)_i=\theta (x')_i$
for $0\le i<r$. As $\Phi$ has right radius at most one and left radius 0, then 
$\eta(z)_i=(\pi\circ\theta (x))_i=(\pi\circ\theta (x'))_i=\eta(z')_i$
for $0\le i\le r-1$.
It follows that $\eta$ is a length $r$ substitution map, as required.

If $x\in X_{\theta}$ is a fixed point of $\theta$, then $\pi(x)\in \bar X$
is a fixed point of $\eta$. Furthermore, by minimality its orbit is
dense in $\bar X$, so that $\bar X$ is the substitution space of $\eta$.
By \eqref{eq:piPhicomm}, $\pi$ conjugates $\Phi \colon X_{\theta}\to X_{\theta}$
to $\sigma:X_\eta \to X_\eta$.  The result follows.

 \end{proof}

   \begin{proof}[Proof of Theorem \ref{roots}]

   We continue with the notations of Lemmas \ref{roots-one} and $\ref{roots-two}$.
   By Lemma \ref{roots-one}, there exist automorphisms $\Phi$ and $\tau$, with $\kappa$ values $\frac{1}{k}$ and 0 respectively, and  satisfying Equations \eqref{eq:thetaPhicomm} and \eqref{eq:Phi-tau-link}.     Also $\tau^r=\tau$.   By Lemma \ref{roots-two}, $(X_\theta, \Phi^k)= (X_\theta,\sigma\circ \tau)$ is conjugate to the $k$-compression  $(X_{\eta^{(k)}},\sigma)$  of the substitution shift $(X_\eta,\sigma)$.
   Let $\bar \pi:  (X_\theta,\sigma\circ \tau)\rightarrow   (X_{   \eta^{(k)} },\sigma)$ be a conjugacy.
   Since $\tau^{-1}$ is an automorphism
 of     $(X_\theta,\sigma\circ \tau)$, this implies that  $(X_\theta,\sigma)$ is conjugate to  $  (X_{ \eta^{(k)}},\sigma\circ \bar \tau) $
 for some automorphism $\bar \tau$ of $(X_{\eta^{(k)}},\sigma)$ with $\bar\tau\in \ker\kappa_{\eta^{(k)}}$.

   We claim that $\bar\tau$ has radius zero. To see this it is sufficient to show that $\eta$ is strongly injective, as this will imply that $\eta^{(k)}$  
 is also strongly injective and we are done by Theorem \ref{radius0}.

  Note that the column number of $\eta$ is at most the column number of $\theta$. If $\theta$ has column number $c$, we can assume, by considering  the rotation in the proof of Theorem 
 \ref{strongly-injective-rep} if necessary, where we replace $\theta$ by  the appropriate $k$-shifted 2-sliding block representation,
  that $\theta$ has exactly $c$ right-infinite fixed points and $c$ left-infinite fixed points. The map $\Phi$ has left radius zero and right radius one, and  each right-infinite $\theta$-fixed point
 is mapped to a right-infinite $\eta$-fixed point. Furthermore any right-infinite $\eta$-fixed point  is the image under $\pi$, of a right-infinite  $\theta$-fixed point. 
 Thus $\eta$ has $c$ right-fixed points and hence its column number must also be $c$. Now since $\kappa(\Phi^{-1})=-\kappa(\Phi^{-1})$,  $\Phi^{-1}$ has left radius one and right radius 0. Working with $\Phi^{-1}$
 (i.e. moving up this spacetime diagram, instead of down it), we see as above that $\eta$ has exactly $c$ left fixed points.
  By the discussion in
 the proof of Theorem \ref{strongly-injective-rep}, $\eta$ is strongly injective.

 Thus $\bar\tau$ has radius zero. Suppose that $\eta^{(k)}$ is defined on an alphabet $\mathcal B$. Define a map $p:X_{\eta^{(k)}} \rightarrow \mathcal B^{\Z}$ by $p(x)_n=((\sigma\circ \bar \tau)^{n}( x))_0$. One verifies that if $x\in X_{\eta^{(k)}}$ is a fixed point for $\eta^{(k)}$, then $p(x)_n = {\bar \tau}^{n}(x_n)$, i.e. 
 $p(x)$ is a 
  $\left({\eta^{(k)}}\right)_{\bar \tau}$-fixed point. Now $p $ is an injection, since $\bar \tau$ is an automorphism with  radius zero.
 Furthermore $p\circ\sigma\circ\bar\tau = \sigma\circ p$, i.e. $p$ is a conjugacy between $(X_{\eta^{(k)}},\sigma\circ\bar \tau)$ and  $(X_{ ({\eta^{(k)}} )_{\bar\tau}  }   ,    \sigma)$.   We have shown that
$(X_\theta,\sigma)$ is conjugate to  $  (X_{ \eta^{(k)}},\sigma\circ \bar \tau) $, which is conjugate to $(X_{ ({\eta^{(k)}} )_{\bar\tau}  }   ,    \sigma)$, i.e. that $(X_\theta,\sigma)$ is conjugate to the twist of a $k$-compression.
 \end{proof}

We specialise to the following case,  which includes the case  when $\Aut(X_\theta, \sigma)$ is cyclic, generated by an automorphism whose $\kappa$-value is $\frac{1}{k}$.  This happens, for instance, if $\theta$
has a coincidence, and more generally
 if $\kappa$ is injective and $\theta$ has height
coprime to $k$  \cite[Corollary 3.7]{coven-quas-yassawi}. Then we can restate our result as follows.

\begin{cor}\label{roots-corollary}
Let $\theta$ be a primitive length $r$ substitution  
such that $X_\theta$ is infinite. If $\Phi\in \Aut(X_\theta,\sigma)$ and $\Phi^k=\sigma$
then $(X_\theta,\sigma)$ is topologically conjugate to the $k$-compression of a substitution shift $(X_{\eta},\sigma)$.

\end{cor}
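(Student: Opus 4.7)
The strategy is to revisit the proof of Theorem \ref{roots}, noting that the stronger hypothesis $\Phi^k=\sigma$ (rather than merely $\kappa(\Phi)=1/k$) rules out the twist that appears in Lemma \ref{roots-one}. By Theorems \ref{thm:injective} and \ref{strongly-injective-rep} we may assume $\theta$ is strongly injective; and, after the standard reduction via Proposition \ref{prop:automorphism_with_height}, we may assume $\theta$ has height one.

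The central observation is that the self-map $f\colon \Phi\mapsto \theta^{-1}\circ\Phi^r\circ\theta$ on $\kappa^{-1}(1/k)$, used inside the proof of Lemma \ref{roots-one} to achieve the commutation $\Phi^r\circ\theta=\theta\circ\Phi$, preserves the equation $\Phi^k=\sigma$. Indeed, using the relation $\sigma^r\circ\theta=\theta\circ\sigma$, one computes
\[
f(\Phi)^k=\theta^{-1}\circ \Phi^{rk}\circ \theta=\theta^{-1}\circ \sigma^r\circ \theta=\sigma.
\]
Since $\ker\kappa$ is finite, so is the coset $\kappa^{-1}(1/k)$, and the $f$-orbit of our given $\Phi$ is eventually periodic. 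Choose $n$ such that $\Phi':=f^n(\Phi)$ is a fixed point of some iterate $f^m$; then $\Phi'^{r^m}\circ\theta^m=\theta^m\circ\Phi'$ while $\Phi'^k=\sigma$ by the preservation property above.

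Replacing $\theta$ by $\theta^m$ (which does not alter $X_\theta$), we are now in the situation of Lemma \ref{roots-two} with the specific choice $\tau=\Id$ (and $\tau^{r^m}=\tau$ trivially). Applying the lemma produces a primitive constant-length substitution $\eta$ and a topological conjugacy $\pi\colon (X_\theta,\Phi')\to (X_\eta,\sigma)$. Because $\Phi'^k=\sigma$, this conjugacy sends $(X_\theta,\sigma)=(X_\theta,\Phi'^k)$ to $(X_\eta,\sigma^k)$, which by the definition of $k$-compression is topologically conjugate to $(X_{\eta^{(k)}},\sigma)$, as required.

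The only subtlety is verifying that $f$ genuinely maps $\kappa^{-1}(1/k)$ to itself, i.e.\ that $\theta^{-1}\circ\Phi^r\circ\theta$ is well defined. This amounts to checking $\Phi^r\circ\theta(x)\in\theta(X_\theta)$, which follows from recognizability together with the computation $\pi(\Phi^r\circ\theta(x))=r\pi(x)+r/k\in r\Z_r$; this is the same ingredient used in the proof of Lemma \ref{roots-one}. The novelty here is simply to track the identity $\Phi^k=\sigma$ through the iteration, which trivialises the twist that otherwise appears in Theorem \ref{roots}.
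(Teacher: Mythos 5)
Your argument is correct and is essentially the derivation the paper intends: the corollary is presented as a specialization of Theorem \ref{roots} with no separate proof, and your observation that the normalization map $f(\Phi)=\theta^{-1}\circ\Phi^{r}\circ\theta$ preserves the identity $\Phi^{k}=\sigma$ (so that the twist $\tau$ of Lemma \ref{roots-one} may be taken to be the identity) is precisely the detail the paper leaves implicit. The remaining steps --- passing to a power of $\theta$, applying Lemma \ref{roots-two} with $\tau=\Id$, and identifying $(X_{\eta},\sigma^{k})$ with $(X_{\eta^{(k)}},\sigma)$ --- follow the paper's proof of Theorem \ref{roots} exactly.
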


The following example shows that the twist in Theorem~\ref{roots} is indeed necessary.
\begin{example} \label{twisted}
Consider the substitution
\begin{align*}
	\theta(a) &= afbcb\\
	\theta(b) &= ahfgf\\
	\theta(c) &= afbdf\\
	\theta(d) &= ahfhb\\
	\theta(e) &= ebfgf\\
	\theta(f) &= edbcb\\
	\theta(g) &= ebfhb\\
	\theta(h) &= edbdf.
\end{align*}
It has an automorphism  $\Phi$ with $\kappa$-value $\frac{1}{3}$, and whose local rule $\phi$ is given in Table \ref{local-rule-table}.

\begin{table}[] \label{local-rule-table}
\begin{tabular}{|l||l|l|l|l|l|l|l|l|}
\hline   

  & a & b & c & d & e & f & g & h \\ \hhline{|=#=|=|=|=|=|=|=|=|}
a &   &   &   &   &   & b &   & b \\ \hline
b & f &   & f & f & h & h &   &   \\ \hline
c &   & a &   &   &   &   &   &   \\ \hline
d &   & g &   &   &   & e &   &   \\ \hline
e &   & f &   & f &   &   &   &   \\ \hline
f & d & d &   &  &b   &   & b & b \\ \hline
g &   &   &   &   &   & e &   &   \\ \hline
h &   & a &   &   &   & c &   &   \\ \hline
\end{tabular}
\vspace{0.5cm} 
\caption{
Local rule table for $\phi,$ Example \ref{twisted}. The entry in the row labelled with $\alpha$ and in  the column labelled with $\beta$ is $\phi(\alpha \beta)$.  The map $\phi$ need only be defined on the language of $\theta$.}
\end{table}

Let

\begin{align*}
	\eta(a) &= abfbh\\
	\eta(b) &= abfdg\\
	\eta(c) &= abfbh\\
	\eta(d) &= abfdg\\
	\eta(e) &= efbfd\\
	\eta(f) &= efbhc\\
	\eta(g) &= efbfd\\
	\eta(h) &= efbhc,
\end{align*}
Now $\eta$ has an order two automorphism $\tau$ defined by the permutation $(ae)(bf)(cg)(dh)$, 
$(X_\theta, \Phi)$ is conjugate to $ (X_{\eta}, \sigma)$, 
and $(X_{\theta}, \sigma)$ is conjugate to a twist (by $\tau$) of the  $3$-compression of $(X_\eta,\sigma)$.

\end{example}

In general, what can one say about the $\kappa$-kernel of the substitution $\theta$ versus that of the substitution $\eta$ in the statement of Theorem \ref{roots}? The following examples tell us that in general, there is no elegant statement. In this next example, we were still able to cling to a sense that we could set things up so that the $\kappa$-kernels of the relevant maps are isomorphic.

\begin{example}\label{example-salvaged}
Consider the substitution $\theta$ on the alphabet $\mathcal{A}$:
\begin{align*}
  \theta(A) &= ABC\\
  \theta(B) &= AAD\\
  \theta(C) &= EFG\\
  \theta(D) &= HIB\\
    \theta(E) &= EJJ\\
  \theta(F) &= FCB\\
  \theta(G) &= KKC\\
  \theta(H) &= HLL\\
 \theta(I) &= IDG\\
  \theta(J) &= FEL\\
  \theta(K) &= KGD\\
  \theta(L) &= IHJ.
\end{align*}

\begin{table}[]\label{example-salvaged-rule}

\begin{tabular}{|C||C|C|C|C|C|C|C|C|C|C|C|C|}
\hline
  & A  & B & C & D & E & F & G & H & I & J & K & L \\ \hhline{|=#=|=|=|=|=|=|=|=|=|=|=|=|}
A & J & J &   & F &   &   &   &   &   &   &   &   \\ \hline
B &   &   & I &   & I &   &   &   &   &   & L &   \\ \hline
C & H & H &   &   &   &   &  &D   &   &   &   &   \\ \hline
D &   &   &   &   & C &   & E &   &   &   & E &   \\ \hline
E &   &   &   &   &   & B &   &   &   & B &   & A \\ \hline
F &   &   & C &   & C &   & E &   &   &   &   &   \\ \hline
G & J &   &   & F &   &  &   &F   &   &   &   &   \\ \hline
H &   &   &   &   &   &   &   &   & G & K &   & G \\ \hline
I &   & H &   & D &   &   &   & D &   &   &   &   \\ \hline
J &   &   &   &   &   & K &   &   & G & K &   &   \\ \hline
K &   &   & I &   &   &   & L &   &   &   & L &   \\ \hline
L &   &   &   &   &   & B &   &   & A &   &   & A \\ \hline
\end{tabular}
\vspace{0.5cm} 
\caption{
The local rule for the square root $\Phi$ of the shift in Example \ref{example-salvaged}.}
\end{table}

Then $\theta$ has one nontrivial automorphism $\tau$ belonging to $\ker\kappa_\theta$, defined by the permutation $(AK)(BG)(CD)(EH)(FI)(JL)$. It also has two square roots of the shift, one of which has local rule in 
Table \ref{example-salvaged-rule} and which we call $\Phi$, the other of which is $\tau\circ\Phi$. Now using $\Phi$, and working through the proof of  Lemma \ref{roots-two}, we can show that  $(X_\theta,\Phi)$ is topologically conjugate to $(X_\eta,\sigma)$ where  
\begin{align*}
  \theta(a) &= acb\\
  \theta(a) &= bda\\
  \theta(c) &= cef\\
  \theta(d) &= dfe\\
    \theta(e) &= ead\\
  \theta(f) &= fbc.\\
\end{align*}

So $\theta$ is the twist of a compression of $\eta$, but  $\ker\kappa_\eta$ is isomorphic to $S_3$, so here it is not the case that the $\kappa$-kernels of the two maps $\theta$ and $\eta$ are the same.  However, note that if we use $\tau\circ\Phi$ in Lemma \ref{roots-two}, we obtain that 
$(X_\theta, \tau\circ\Phi)$ is topologically conjugate to $(X_{\eta_\tau},\sigma)$,  and that  $\ker \kappa_{\eta_\tau}$ {\em is} isomorphic to $\ker \kappa_{\theta}$.

\end{example}

 However, whatever hope we clung to with Example \ref{example-salvaged} was dashed with this next example.
 
 \begin{example}\label{no-hope}
Consider 

\begin{align*}
 \theta(A) &=ABC\\
\theta(B) &= DEC\\
\theta(C) &= AFG\\
\theta(D) &= DGB\\
\theta(E) &= HAC\\
\theta(F) &= DHG\\
\theta(G) &= HIB\\
\theta(H) &= HCG\\
\theta(\,I\,)  &= ADB.\\
\end{align*}

which is a two-compression of $(X_{\eta_1},\sigma)$ where 

\begin{align*}
\eta_1(a) &=abb, \\ 
\eta_1(b) &= bac, \\
\eta_1(c) &= cca. \\
\end{align*}

Note  that $\theta$ has a nontrivial kernel automorphism $\tau$ given by the permutation $(ADH)(BGC)(EIF)$, and so, since it is a two-compression, it also has three automorphisms with $\kappa$ value $1/2$, $\Phi_1$, $\Phi_2=\tau\circ\Phi_1$ and $\Phi_3=\tau^2\circ\Phi_1$. Note also that $\tau$ and $\Phi_1$ do not commute. Finally, $\Phi_1^{2} = \Phi_2^2=\Phi_3^2=\sigma$, and using $\Phi_i$ in Lemma \ref{roots-two}, we get that $(X_{\theta},\Phi_i)$ is conjugate to a two-compression of $(X_{\eta_i},\sigma)$ where 
\begin{align*}
\eta_1(a) &=abb, 
 \hspace{1cm}\eta_2(a)=acb  \hspace{1cm}   \eta_3(a) = aab\\ 
 \eta_1(b) &= bac \hspace{1.2cm}\eta_2(b) =bbc   \hspace{1cm} \eta_3(b) = bcc\\
\eta_1(c) &= cca   \hspace{1.2cm}    \eta_2(c)=cba  \hspace{1cm} \eta_3(c) =cba \\
\end{align*}
all three of whose automorphism groups are trivial.

\end{example}

\begin{remark}\label{obstruction}
What causes this breakdown? 
 We have seen in Theorem~\ref{characterisation-of-kernel} that the automorphisms of $\eta$ with $\kappa$-value $0$ can be characterized in terms of the minimal sets and the group
 \begin{align*}
 	G(\eta) := <\sigma_{M,j}: M \in \mathcal{X}, 0\leq j \leq r-1>.
 \end{align*}
 Let us now consider the $k$-compression  $\eta^{(k)}$ of $\eta$ for some $k \mid r-1$.
 One can then show that
 \begin{align*}
 	G(\eta^{(k)}) = <\sigma_{M,j}: M \in \mathcal{X}, 0 \leq j \leq r-1, k \mid j>.
 \end{align*}
 Thus, we might be in the situation where $G(\eta^{(k)})$ is strictly smaller then $G(\eta)$, or one where the centralizer of $G(\eta^{(k)})$ is strictly larger then the centralizer of $G(\eta)$.
 
 This is precisely what happens in this last example, where $G(\eta_i)=S_3$, so that $\eta_i$ has a trivial automorphism group, but $G(\eta_i^{(2)}) = A_3 = \{\Id, (123), (132)\} = C(G(\eta_i^{(2)}))$, so that we have non-trivial automorphisms with $\kappa$ value $0$. 
\end{remark}

\proof[Acknowledgements] 
We thank Anthony Quas for  the key insights in Corollary \ref{roots-corollary}, and Tomasz Downarowicz and Mariusz Lemanczyk for pointing out Theorem \ref{automorphism-almost-automorphic-v2} to us.
{\footnotesize
\bibliographystyle{abbrv}
\bibliography{bibliography}
}

\end{document}